\numberwithin{equation}{section} 
\theoremstyle{definition}
\newtheorem{definition}{Definition}[section]
\newtheorem{example}[definition]{Example}
\theoremstyle{plain}
\newtheorem{proposition}[definition]{Proposition}
\newtheorem{theorem}[definition]{Theorem}
\newtheorem{corollary}[definition]{Corollary}
\newtheorem{note}[definition]{Note}
\newtheorem*{conjecture*}{Conjecture}
\theoremstyle{definition}
\newtheorem{remark}[definition]{Remark}
\newtheorem{notation}[definition]{Notation}
\newcommand{\diver}{\mathrm{div}}
\DeclareMathOperator{\ch}{\mathrm{ch}}
\newcommand{\eps}{\varepsilon}
\newcommand{\di}{\mathrm{d}}
\newcommand{\R}{\mathbb R}
\newcommand{\lip}{\mathrm{Lip}}
\newcommand{\loc}{\mathrm{loc}}
\newcommand{\disp}{\displaystyle}
\newcommand{\capac}{\mathrm{cap}}
\newcommand{\Ric}{\mathrm{Ric}}
\newcommand{\Sec}{\mathrm{Sec}}
\newcommand{\BRic}{\mathrm{BRic}}
\newcommand{\pa}[1]{{\left(#1\right)}}                  
\newcommand*\owedge{\mathpalette\@owedge\relax}
\newcommand*\@owedge[1]{
	\mathbin{
		\ooalign{
			$#1\m@th\bigcirc$\cr
			\hidewidth$#1\m@th\wedge$\hidewidth\cr
		}
	}
}
\newcommand{\sq}[1]{{\left[#1\right]}}
\newcommand{\So}{\mathscr{S}}
\begin{document}

\author{Giovanni Catino, Luciano Mari, Paolo Mastrolia, Alberto Roncoroni}
\title{\textbf{Criticality, splitting theorems under spectral Ricci bounds and the topology of stable minimal hypersurfaces}}

\newcommand{\Addresses}{{
  \bigskip
  \footnotesize

  G.~Catino, \textsc{Dipartimento di Matematica, Politecnico di Milano, Piazza Leonardo da Vinci 32, 20133, Milano, Italy.}\par\nopagebreak
  \textit{E-mail address}, G.~Catino: \texttt{giovanni.catino@polimi.it}

  \bigskip

  L.~Mari, \textsc{Dipartimento di Matematica, Universit\`a degli Studi di Milano,
Via Cesare Saldini 50, 20133 Milano, Italy}\par\nopagebreak
  \textit{E-mail address}, L.~Mari: \texttt{luciano.mari@unimi.it}

  \bigskip

  P.~Mastrolia, \textsc{Dipartimento di Matematica, Universit\`a degli Studi di Milano,
Via Cesare Saldini 50, 20133 Milano, Italy}\par\nopagebreak
  \textit{E-mail address}, P.~Mastrolia: \texttt{paolo.mastrolia@unimi.it}

 \bigskip

  A.~Roncoroni, \textsc{Dipartimento di Matematica, Politecnico di Milano, Piazza Leonardo da Vinci 32, 20133, Milano, Italy.}\par\nopagebreak
  \textit{E-mail address}, A.~Roncoroni: \texttt{alberto.roncoroni@polimi.it}

}}

\date{}

\maketitle

\begin{center}
	\textit{To the memory of Francesco ``Franco'' Mercuri and Celso Viana}
\end{center}

\normalsize

\

\begin{abstract}
	In this paper we prove general criticality criteria for operators $\Delta + V$ on manifolds with more than one end, where $V$ bounds the Ricci curvature, and a related spectral splitting theorem extending Cheeger \& Gromoll's one. Our results give new insight on Li \& Wang's theory of manifolds with a weighted Poincar\'e inequality. We apply them to study stable and $\delta$-stable minimal hypersurfaces in manifolds with non-negative bi-Ricci or sectional curvature, in ambient dimension up to $5$ and $6$, respectively. In the special case where the ambient space is $\R^4$, we prove that a $1/3$-stable minimal hypersurface must either have one end or be a catenoid, and that proper, $\delta$-stable minimal hypersurfaces with $\delta > 1/3$ must be hyperplanes.
\end{abstract}

\

\

\noindent
\textbf{MSC2020}: 53C24, 53C21, 53C42.

\noindent
\textbf{Keywords:} Criticality, splitting, stable minimal hypersurfaces, spectral Ricci bounds, catenoid.



\section{Introduction}

A classical theme in Riemannian geometry is the study of the structure of manifolds whose Ricci curvature is bounded from below. To this aim, if $\Ric \ge 0$ a major result is given by Cheeger \& Gromoll's splitting theorem \cite{chgr}. However, when the Ricci curvature is negative somewhere, to control the topology of $M$ (for instance, the number of its ends) one needs to complement the Ricci lower bound with some further assumptions saying, roughly speaking, that the negative part of the Ricci curvature is not too large compared to suitable data on the global behaviour of $M$. Among the conditions that were proposed in the literature, a \emph{spectral Ricci lower bound} is  natural in view of applications, see \cite{prs_book}. By a spectral Ricci lower bound we mean the existence of $V \in C^{0,\alpha}_\loc(M)$ and $\beta \in \R^+$ so that
\begin{equation}\label{assu_basic_L}
	\Ric \ge - \beta V g, \qquad L \doteq -\Delta - V \ge 0,
\end{equation}
where $L \ge 0$ means that the operator is non-negative in the spectral sense, i.e. the associated quadratic form 
\[
\int_M |\nabla \phi|^2 - V \phi^2 
\]
is non-negative for each $\phi \in C^1_c(M)$. Note that assuming \eqref{assu_basic_L} for some $V$ is equivalent to saying that 
\[
-\beta \Delta + \lambda_{\Ric} \ge 0, 
\]
where $\lambda_\Ric : M \to \R$ is the lowest eigenvalue of the Ricci tensor\footnote{It is known that $\lambda_\Ric$ is a locally Lipschitz function (cf. \cite{hw}), so assuming $V \in C^{0,\alpha}_\loc(M)$ is not restrictive.}. Relevant examples of such manifolds include stable minimal hypersurfaces $M^n \to N^{n+1}$ into non-negatively curved ambient spaces, for which one has
\[
\Ric \ge - \frac{n-1}{n}|A|^2 g
\]
by Gauss equation, while the stability condition implies $-\Delta -|A|^2 \ge 0$. These will be the focus of our geometric applications. Indeed, the study of manifolds with a spectral Ricci bound gained new momentum in connection to the recent almost complete solution to the Stable Bernstein Problem, that is, the question whether complete, two-sided stable minimal hypersurfaces $M^n \to \R^{n+1}$ must be hyperplanes if $n \le 6$. The positive answers obtained by Chodosh \& Li \cite{cl_1,cl_2} ($n=3$), Catino, Mastrolia \& Roncoroni \cite{cmr} ($n=3$), Chodosh, Li, Minter \& Stryker \cite{clms} ($n=4$) and Mazet \cite{mazet} ($n=5$), and  the results by Bellettini \cite{bellettini} for $n=6$, provided new powerful tools with promising applications to treat more general ambient spaces. \par

Versions of the splitting theorem under spectral Ricci lower bounds were first studied in the seminal works of Li \& Wang \cite{liwang_positive_1,liwang_positive_2} (in the case of constant $V$) and \cite{liwang_ens}. For suitable ranges of $\beta$, the authors characterized warped product structures among manifolds satisfying \eqref{assu_basic_L} and having at least two ends (in some cases, two non-parabolic ends). Their approach differs from that of Cheeger and Gromoll: in particular, the use of Busemann functions is replaced by harmonic functions, relying on the theory developed by Li and Tam \cite{litam} (see \cite{prs_book} for a detailed exposition). In this analysis, the need to restrict to
\[
\beta \le \frac{n-1}{n-2}
\]
arises naturally, and this bound is in fact sharp, as demonstrated by the recent gluing construction of Antonelli \& Xu \cite{anto_xu_gluing}. Moreover, obtaining rigidity in the range
\[
\beta \in \left(1, \frac{n-1}{n-2}\right]
\]
is particularly delicate. When $V$ is not constant, Li and Wang were forced to impose additional technical assumptions which, although quite general and later substantially refined by Cheng \& Zhou \cite{cheng_zhou_1}, still limit the applicability of the main structural results to stable minimal hypersurfaces.


In this paper, we study manifolds satisfying \eqref{assu_basic_L} and obtain new results complementing Li \& Wang's ones. Hereafter, 
\[
\text{\emph{All manifolds $M$ will be assumed connected.}}
\]
Among others, we mention the following spectral splitting theorem which generalizes Cheeger \& Gromoll's \cite{chgr} and Cheng's \cite{xucheng}: 

\begin{theorem}\label{chegrointro}
	Let $(M,g)$ be a complete Riemannian manifold of dimension $n \ge 3$, and assume that there exists $V \in C^{0,\alpha}_\loc(M)$ such that
	\[
	\Ric \ge - \beta V g \qquad \text{on } \, M, \qquad -\Delta - V \ge 0,
	\]
	where either
	\[
	\begin{array}{ll}
		\disp \quad \quad 0<\beta < \frac{4}{n-1}, & \quad \text{or }  \\[0.5cm]
		\disp \frac{4}{n-1} \le \beta < \frac{n-1}{n-2}  & \quad \text{and $V_+$ is  compactly supported.}
	\end{array}
	\]
	Then, either
	\begin{itemize}
		\item[(i)] $M$ has only one end, or
		\item[(ii)] $V \equiv 0$ and $M = \R \times P$ with the product metric, for some compact $P$ with $\Ric_P \ge 0$.	
	\end{itemize}
\end{theorem}

%

\begin{remark}
	The case $\beta < \frac{4}{n-1}$ was also obtained, independently and at the same time, by Antonelli, Pozzetta \& Xu \cite{apx} with a different technique, see below.
\end{remark}

Before explaining the results and their proofs in more detail, we describe our applications.

\subsection*{Applications to minimal hypersurfaces}

We consider two-sided, complete minimal hypersurfaces $M^n \to (N^{n+1},\bar g)$ immersed into an ambient space $N$ with non-negative sectional curvature $\overline{\Sec}$ or, more generally, non-negative bi-Ricci curvature $\overline{\BRic}$. Recall that, given an orthonormal set $\{X,Y\} \subset T_pN$, the bi-Ricci curvature is defined as
\[
\overline{\BRic}(X,Y) = \overline{\Ric}(X,X) + \overline{\Ric}(Y,Y) - \overline{\Sec}(X,Y).
\]
In particular, if $n+1=3$ then $\overline{\BRic} = \bar R/2$, with $\bar R$ the scalar curvature of $N$. 

We first suppose that $M$ is stable, i.e. that the Jacobi operator 
\[
-\Delta - \left( |A|^2 + \overline{\Ric}(\nu,\nu)\right)
\]
is non-negative. Here, $\nu$ is a global unit normal vector field on $M$ and $A$ is the second fundamental form in direction $\nu$. 

Compared to the extensive literature on minimal surfaces in non-negatively curved three-manifolds, relatively few results are available in higher dimensions. In particular, aside from some special cases (notably the one in \cite{cls}), the Stable Bernstein Theorem -- namely, the statement that complete, two-sided stable minimal hypersurfaces must be totally geodesic -- remains open in ambient manifolds of dimension $n +1 \ge 4$ with $\overline{\Sec} \ge 0$ other than $\R^{n+1}$. Structural results for stable minimal hypersurfaces, such as those established below, may therefore be viewed as a first step in this direction.
	
In the following list, we collect the results most closely related to the present paper. We assume $n \ge 3$ and, for simplicity, that $N$ is complete.

%

\begin{itemize}
	\item Li \& Wang \cite[Thm. 0.1]{liwang_minimal}: if $\overline{\Sec} \ge 0$ and $M$ is properly immersed, then either $M$ has only one end or $M$ is totally geodesic and $M = \R \times P$ with the product metric, for some compact $P$ with non-negative sectional curvature.
	\item Cheng \cite[Thm. 3.2]{xucheng}: if $\overline{\Sec} \ge 0$ and $\overline{\Ric} > 0$, then $M$ is non-parabolic and it has only one end.
	\item Chodosh, Li \& Stryker \cite[Thm. 1.3 and Remark 1.4]{cls}: if $n=3$, $0 \le \overline{\Sec} \le \kappa$ for some $\kappa \in \R^+$ and the scalar curvature of $N$ satisfies $\bar R \ge 1$, then $M$ is totally geodesic and $\overline{\Ric}(\nu,\nu) \equiv 0$.
	\item Tanno \cite[Thm. B]{tanno}: if $\overline{\BRic} \ge 0$ and $M$ is orientable, then the space of $L^2$ harmonic $1$-forms on $M$ is trivial. In particular, Li-Tam's theory in \cite{litam} implies that $M$ has only one non-parabolic end, and by \cite{dodziuk}, every cycle of codimension $1$ must disconnect $M$. The result was obtained in \cite{miya} by assuming $\overline{\Sec} \ge 0$, see also \cite{palmer,vieira}. Related sharp vanishing theorems on spinnable minimal hypersurfaces were recently obtained by Bei \& Pipoli \cite{bei_pipoli}.	
	\item Shen \& Zhu \cite[Thm. 1 and Rem. 2]{shen_zhu_MathAnn}: if $n \in \{3,4\}$, $\overline{\BRic} \ge 0$ and $M$ is topologically $\R \times P$, where $P$ is compact and admits a metric of non-positive sectional curvature, then $M$ is totally geodesic. Moreover, $M$ is flat if $\overline{\Sec} \ge 0$.
	\item Cheng \cite[Thm. 2.1]{xucheng}: if $n \in \{3,4\}$ and  $\overline{\BRic} > 0$, then $M$ must have only one end\footnote{Cheng assumed $M$ and $N$ orientable, but her proof only uses that $M$ is two-sided in $N.$}.
\end{itemize}


We are now ready to state our main applications. We first consider ambient manifolds with non-negative bi-Ricci curvature, and obtain the following splitting theorem that improves on \cite{xucheng,shen_zhu_MathAnn}.

%
%

\begin{theorem}\label{teo_minimal_main_stable}
	Let $M^n \to (N^{n+1},\bar g)$ be a complete, non-compact, two-sided stable minimal hypersurface of dimension $n \in \{3,4\}$ in a manifold satisfying
	\[
	\overline{\BRic} \ge 0.
	\]
	Then, one of the following mutually exclusive cases occurs:
	\begin{itemize}
		\item[(i)] $M$ has only one end;
		\item[(ii)] $M$ is totally geodesic and $M =  \R \times P$ with the product metric $\di t^2 + g_P$, for some compact $(n-1)$-manifold $(P,g_P)$ with non-negative Ricci curvature. Moreover,
		\[
		\overline{\Ric}(\nu,\nu) \equiv 0, \qquad  \overline{\BRic}(\partial_t,\nu) \equiv 0.
		\]
	\end{itemize}
	Furthermore, if $\overline{\Ric} \ge 0$ then in (i) either $M$ is non-parabolic, or $M$ is parabolic, totally geodesic and $\overline{\Ric}(\nu,\nu) \equiv 0$.
\end{theorem}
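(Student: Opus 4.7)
The plan is to reduce Theorem \ref{teo_minimal_main_stable} to the spectral splitting result established earlier in the paper, with the bridge being the potential
\[
V := |A|^2 + \overline{\Ric}(\nu,\nu).
\]
Two structural facts tie the hypotheses to $V$. First, stability is literally the bound $\int_M V \phi^2 \le \int_M |\nabla \phi|^2$ for all $\phi \in C^\infty_c(M)$, i.e.\ $\lambda_1(-\Delta - V) \ge 0$. Second, the Gauss equation applied to the two-sided minimal immersion yields, for any unit $X \in T_pM$,
\[
\Ric_M(X,X) + V \;=\; \overline{\BRic}(X,\nu) + \bigl(|A|^2 - |A(X)|^2\bigr),
\]
with both summands non-negative under $\overline{\BRic} \ge 0$. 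Hence $\Ric_M \ge -V \cdot g$ pointwise. This exhibits the data $(M,V)$ as an instance of the spectral splitting setup already treated, where the dimensional constraint $n \in \{3,4\}$ enters through the refined Kato inequality driving the Bochner analysis.

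Assume now $M$ has at least two ends. The earlier splitting theorem then forces $M$ to be isometric to a product $(\R \times P, \di t^2 + g_P)$, and the ``two ends'' hypothesis makes $P$ compact. To extract the rigidity, apply the displayed identity at $X = \partial_t$: since $\Ric_M(\partial_t,\partial_t) = 0$ in the product, one reads
\[
V \;=\; \overline{\BRic}(\partial_t,\nu) + \bigl(|A|^2 - |A(\partial_t)|^2\bigr) \;\ge\; 0.
\]
On the other hand, stability tested against $\phi = \psi(t)$ with $\psi \in C^\infty_c(\R)$ reduces, via Fubini over the compact factor $P$, to $\lambda_1(-\partial_t^2 - F) \ge 0$ on the real line for $F(t) = \int_P V(t,\cdot)\,\di V_P / \vol(P)$; standard logarithmic cutoffs on $\R$ yield $\int_\R F \le 0$, i.e.\ $\int_M V \le 0$. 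Combined with the pointwise bound $V \ge 0$ just obtained, this forces $V \equiv 0$, so $|A|\equiv 0$ (totally geodesic) and $\overline{\Ric}(\nu,\nu)\equiv 0$; the identity at $X=\partial_t$ then gives $\overline{\BRic}(\partial_t,\nu)\equiv 0$, while $\Ric_P \ge 0$ is the restriction of $\Ric_M \ge -V g = 0$ to $TP$.

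For the last assertion, assume $\overline{\Ric}\ge 0$ so that $V\ge 0$, and suppose $M$ has a single end and is parabolic. The Agmon--Allegretto--Piepenbrink correspondence turns $\lambda_1(-\Delta-V)\ge 0$ into a positive solution $w$ of $\Delta w = -Vw \le 0$; parabolicity forces $w$ constant, hence $V\equiv 0$, and again $|A|\equiv 0$ together with $\overline{\Ric}(\nu,\nu)\equiv 0$.

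The main obstacle I foresee is not the rigidity extraction above, which is essentially bookkeeping once the splitting is in place, but rather checking that the hypotheses of the earlier spectral splitting theorem are literally met in this situation: one must handle the case in which some of the ends of $M$ are parabolic (recall that Tanno's $L^2$-harmonic-form vanishing only rules out multiple non-parabolic ends), and ensure that the splitting machinery still produces a suitable harmonic function when mixing parabolic with non-parabolic ends. This is precisely where the general criticality criteria developed at the start of the paper should be invoked.
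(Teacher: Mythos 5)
Your overall strategy coincides with the paper's: set $V=|A|^2+\overline{\Ric}(\nu,\nu)$, read stability as $\Delta+V\ge0$, use Gauss plus $\overline{\BRic}\ge0$ to get $\Ric\ge -Vg$, and invoke the spectral splitting theorem (Theorem \ref{teo_main_critical_2} with $\beta=1$). Two of your side remarks are off, though. The restriction $n\in\{3,4\}$ does not come from ``the refined Kato inequality driving the Bochner analysis'' (there is no Bochner argument here); it comes from the hypothesis $\beta<\frac{4}{n-1}$ of Theorem \ref{teo_main_critical_2}, which with $\beta=1$ reads $n\le 4$. And the worry in your last paragraph is misplaced: that theorem is proved by the conformal method and criticality theory, makes no distinction between parabolic and non-parabolic ends, and its hypotheses are exactly the two inequalities you derived, so there is nothing left to check. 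Note also that Theorem \ref{teo_main_critical_2} outputs $V\equiv0$ \emph{together with} the splitting (the product structure is deduced from $V\equiv0$ via Cheeger--Gromoll, not the other way around), so your one-dimensional stability test with $\phi=\psi(t)$ is correct but redundant.

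The one genuine gap is the step ``$V\equiv0$, so $|A|\equiv0$ and $\overline{\Ric}(\nu,\nu)\equiv0$.'' Since only $\overline{\BRic}\ge0$ is assumed, $\overline{\Ric}(\nu,\nu)$ could a priori be negative, so $|A|^2+\overline{\Ric}(\nu,\nu)=0$ does not by itself kill $A$; this is precisely the subtlety flagged in the remark following the theorem. To close it, return to your Gauss identity at $X=\partial_t$: with $\Ric(\partial_t,\partial_t)=0$ and $V=0$, both non-negative summands must vanish, giving $\overline{\BRic}(\partial_t,\nu)\equiv0$ and $|A|^2=\sum_j A(\partial_t,e_j)^2$. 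The latter forces every component of $A$ outside the first row and column to vanish, and tracelessness then gives $A_{11}=0$, hence $A\equiv0$ and finally $\overline{\Ric}(\nu,\nu)=V-|A|^2\equiv0$. (The paper shortcuts this by keeping the refined Kato term from the outset, i.e.\ $\Ric\ge -Vg+\frac{1}{n}|A|^2g$, so that $V\equiv0$ and $\Ric(\partial_t,\partial_t)=0$ immediately yield $|A|\equiv0$.) With that step supplied your argument is complete; the final parabolic case is identical to the paper's.
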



\begin{remark}
	Observe that $\overline{\BRic}\ge 0$ does not imply $\overline{\Ric} \ge 0$, so the conclusion $\overline{\Ric}(\nu,\nu) \equiv 0$ in $(ii)$ is not obvious. In dimension $n=2$, $\overline{\BRic}\ge 0$ becomes $\bar R \ge 0$, hence Theorem \ref{teo_minimal_main_stable} can be seen as a higher dimensional analogue of the following well-known result by Fischer-Colbrie \& Schoen, \cite[Theorem 3]{fischer_schoen}: a stable minimal surface $M^2 \to N^3$ with more than one end in an ambient space with $\bar R \ge 0$ must be conformally equivalent to a cylinder, flat and totally geodesic provided it has finite total curvature, i.e.
	\[
	\int_M |K| < \infty.
	\]
	In the same paper, the authors conjectured that the request of finite total curvature should be removable. This is the case, as shown by \cite{miya,schoen_yau,berard_castillon}. As a matter of fact, the argument in  Theorem \ref{teo_minimal_main_stable} works \emph{verbatim} also in dimension $n=2$ (with $P = \mathbb{S}^1$), thus providing a further different proof of Fischer-Colbrie and Schoen's claim.
\end{remark}

We next consider ambient spaces with non-negative sectional curvature. Our goal is to improve on Li \& Wang's remarkable splitting theorem \cite[Thm. 0.1]{liwang_minimal} quoted above by removing the condition that the immersion $M \to N$ is proper. As a matter of fact, Li \& Wang's result is the consequence of a more general statement where properness is not required a priori, see \cite[Thm. 4.1]{liwang_minimal}. However, in counting the number of ends the major issue therein is the possible presence of parabolic ends which are extrinsically bounded. In Theorem \ref{teo_minimal_main_stable} we are able to overcome the problem in dimension $n \le 4$. In the next theorem, we include one more dimension:

\begin{theorem}\label{teo_minimal_main_stable_sec}
	Let $M^n \to (N^{n+1},\bar g)$ be a complete, non-compact, two-sided stable minimal hypersurface of dimension $n \in \{3,4,5\}$ in a manifold satisfying
	\[
	\overline{\Sec} \ge 0.
	\]
	Then, one of the following cases occurs:
	\begin{itemize}
		\item[(i)] $M$ has only one end. Moreover, either $M$ is non-parabolic, or $M$ is parabolic, totally geodesic and $\overline{\Ric}(\nu,\nu) \equiv 0$;
		\item[(ii)] $M$ is totally geodesic, $\overline{\Ric}(\nu,\nu) \equiv 0$ and $M =  \R \times P$ with the product metric, for some compact $(n-1)$-manifold $P$ with non-negative sectional curvature.
	\end{itemize}	
\end{theorem}

\begin{remark}
	Differently from \cite{liwang_minimal}, Theorems \ref{teo_minimal_main_stable} and \ref{teo_minimal_main_stable_sec} do not need $N$ to be complete.
\end{remark}


\begin{remark}
	Theorem 0.1 in \cite{liwang_minimal} has a counterpart for properly immersed minimal hypersurfaces $M \to N$ with finite index, which are shown to have finitely many ends, see \cite[Thm. 4.1]{liwang_minimal}. We think it would be very interesting to prove an analogous result in dimension $n \le 5$ by removing the properness assumption.
\end{remark}

As a consequence of arguments by Shen \& Sormani \cite{shen_sormani} and Carron \& Pedon \cite{car_ped}, we also obtain some information on the (co)-homology of $M$. Here, $H^p_c(M)$ is De Rham's $p$-th compactly supported cohomology group of $M$.
\begin{corollary}\label{cor_topol_minimal}
In the assumptions of either Theorem \ref{teo_minimal_main_stable} or Theorem \ref{teo_minimal_main_stable_sec}, if $M$ has only one end, then one of the following cases occurs:  
\begin{itemize}
			\item[-] $H^1_c(M) = 0$ and, if $M$ is orientable, $H_{n-1}(M, \mathbb{Z}) = 0$;			
			\item[-] $M$ is totally geodesic and it is the determinant line bundle of a compact manifold $(P,g_P)$ with locally the product metric $\di t^2 + g_P$, where $t$ is the arclength of the fibers. Moreover, 
			\[
			\overline{\Ric}(\nu,\nu) \equiv 0, \qquad \overline{\BRic}(\partial_t,\nu) \equiv 0. 
			\]
			In particular, $H_{n-1}(M, \mathbb{Z}) = 0$ if $M$ is orientable, and $H_{n-1}(M, \mathbb{Z}) = \mathbb{Z}$ if $M$ is nonorientable.
		\end{itemize}
Moreover, in the assumptions of Theorem \ref{teo_minimal_main_stable_sec}, if $n=3$ then $H_2(M, \mathbb{Z})=0$ holds in the first case independently of the orientability of $M$. 
\end{corollary}
Lastly, we focus on minimal hypersurfaces which are $\delta$-stable, i.e. whose modified Jacobi operator
\[
-\Delta - \delta \left( |A|^2 + \overline{\Ric}(\nu,\nu)\right)
\]
is non-negative. Standard stability corresponds to $\delta =1$, and one easily checks that $\delta$-stability implies $\delta'$-stability if $\delta'<\delta$. The notion originated in the context of Colding--Minicozzi's theory for minimal surfaces \cite{cm_1, cm_2}, and $\delta$-stable minimal surfaces are examples of surfaces whose Gaussian curvature $K$ satisfies  
\[
-\Delta + \beta K - W \ge 0
\]
for some $\beta \ge 0$ and $W \in C(M)$. Beginning with \cite{fischer_schoen}, this class has been extensively investigated in the literature, see \cite{berard_castillon} for a detailed account of their structure. We here consider dimension $n \ge 3$ and the range
\[
\delta \in \left[\frac{n-2}{n},1\right],
\]
corresponding to a spectral Ricci bound \eqref{assu_basic_L} with $\beta \in \left[\frac{n-1}{n}, \frac{n-1}{n-2}\right]$. Our choice to include $\delta$-stable hypersurfaces in our investigation is also motivated by the following beautiful gap result:

\begin{theorem}[\cite{anderson,shen_zhu,fuli,pv}]\label{teo_pv}
	Let $M^n \to \R^{n+1}$ be a complete, two-sided minimal hypersurface with finite total curvature, i.e. $|A| \in L^n(M)$. Then:
	\begin{itemize}
		\item $M$ is $\delta$-stable with $\delta > \frac{n-2}{n}$ if and only if $M$ is a hyperplane;
		\item $M$ is $\frac{n-2}{n}$-stable if and only if $M$ is either a hyperplane or a catenoid.
	\end{itemize}
\end{theorem}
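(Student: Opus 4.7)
The plan is to combine Simons' identity with the refined Kato inequality of Schoen--Simon--Yau to turn $\delta$-stability into a sharp integral inequality for $|A|$, and then to make the remaining boundary term vanish using the finite total curvature hypothesis. Since $N=\R^{n+1}$ satisfies $\overline{\Ric}\equiv 0$, $\delta$-stability reads
\[
\delta\int_M|A|^2\psi^2\le\int_M|\nabla\psi|^2\qquad\forall\,\psi\in C^\infty_c(M),
\]
and Simons together with the refined Kato inequality gives, on $\{|A|>0\}$,
\[
|A|\,\Delta|A|+|A|^4\ge\tfrac{2}{n}|\nabla|A||^2.
\]

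The central step is to insert the one-parameter test function $\psi=|A|^{p}\phi$ (with $p>0$ and $\phi\in C^\infty_c(M)$) into stability, to integrate the Simons--Kato inequality against $|A|^{2p-2}\phi^2$, and to eliminate the cross term $\int|A|^{2p-1}\phi\,\nabla|A|\cdot\nabla\phi$ between them. A short manipulation yields
\[
(\delta-p)\!\int_M|A|^{2p+2}\phi^2\;\le\;p\bigl(1-\tfrac{2}{n}-p\bigr)\!\int_M|A|^{2p-2}\phi^2|\nabla|A||^2+\int_M|A|^{2p}|\nabla\phi|^2 .
\]
Choosing $p=(n-2)/n$ makes the middle coefficient vanish identically, and the estimate collapses to
\[
\Bigl(\delta-\tfrac{n-2}{n}\Bigr)\!\int_M|A|^{4(n-1)/n}\phi^2\;\le\;\int_M|A|^{2(n-2)/n}|\nabla\phi|^2 ,
\]
which exhibits the sharp threshold $\delta=(n-2)/n$ as the precise numerical match between Kato, Simons and stability.

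To close the argument when $\delta>(n-2)/n$, I would apply H\"older with conjugate exponents $n^2/2(n-2)$ and $n^2/(n^2-2n+4)$, choose linear cutoffs $\phi_R$ with $|\nabla\phi_R|\le 2/R$ supported in $B_{2R}\setminus B_R$, and invoke the classical finite-total-curvature regularity theory (Anderson, Hartman, White) which yields both the sharp pointwise decay $|A|(x)=O(|x|^{1-n})$ on every end and Euclidean-type volume growth for $M\cap B_R$. A direct exponent count shows the two factors of the H\"older product combine to a decay of order $R^{-(n-2)^2/n}\to 0$, hence $\int_M|A|^{4(n-1)/n}=0$ and $|A|\equiv 0$, so that $M$ is a hyperplane. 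For the borderline $\delta=(n-2)/n$ the coefficient on the left degenerates and equality must propagate through the entire chain; in particular equality in Kato's inequality on $\{|A|>0\}$, which classically characterises rotationally symmetric minimal hypersurfaces. The Bombieri--de~Giorgi--Miranda / Hsiang classification of minimal hypersurfaces of revolution in $\R^{n+1}$ then restricts $M$ to be a hyperplane or a catenoid, and the converse implications follow from an explicit spectral computation on the catenoid, whose Jacobi operator attains bottom of spectrum zero exactly at the weight $(n-2)/n$ via a radial test function.

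The most delicate step is the vanishing of the right-hand side: the H\"older cutoff integral is actually \emph{divergent} in $R$ for linear cutoffs on a Euclidean-growth hypersurface, and the argument closes only because the \emph{pointwise} decay $|A|=O(r^{1-n})$---strictly finer than the hypothesis $|A|\in L^n(M)$---compensates that growth by exactly the needed $R^{-(n-2)^2/n}$ factor. Tracking the equality case is also subtle, since equality must hold simultaneously in Simons, in Kato, in stability, and in the final H\"older step.
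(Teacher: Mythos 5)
First, a point of reference: the paper does not prove Theorem \ref{teo_pv} at all --- it is quoted from \cite{anderson,shen_zhu,fuli,pv}, so there is no internal proof to compare against. Your proposal follows the standard Schoen--Simon--Yau / Fu--Li route, and the central computation is correct: inserting $\psi=|A|^p\phi$ into stability, integrating the Simons--Kato inequality against $|A|^{2p-2}\phi^2$, and eliminating the cross term is an exact substitution (no Young's inequality enters), and both the coefficient $p\bigl(1-\tfrac2n-p\bigr)$ and the threshold $p=\tfrac{n-2}{n}$ check out. You also correctly diagnose the cutoff step: neither $|A|\in L^n$ alone nor Euclidean volume growth alone closes the estimate, and one must import pointwise decay of $|A|$ on graphical ends from Anderson's structure theory for finite total curvature hypersurfaces (a heavy but legitimate black box, and the one the cited references use as well); your exponent count $R^{-(n-2)^2/n}$ is right. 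Note that once you have the pointwise decay and the volume growth, the H\"older step is redundant: $\sup_{B_{2R}\setminus B_R}|A|^{2(n-2)/n}\cdot R^{-2}\cdot\mathrm{vol}(B_{2R}\setminus B_R)$ already yields the same power of $R$.

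The genuine gap is the borderline case $\delta=\tfrac{n-2}{n}$. At that value your displayed inequality reads $0\le\int|A|^{2(n-2)/n}|\nabla\phi|^2$, which is vacuous, and ``equality must propagate through the entire chain'' is not a mechanism. What actually works is to \emph{retain} the nonnegative deficit terms before specializing: writing Simons--Kato as $|A|\Delta|A|+|A|^4-\tfrac2n|\nabla|A||^2=D\ge0$ and carrying $D$ through the substitution, the borderline inequality becomes $\int D\,|A|^{2p-2}\phi_R^2\le\int|A|^{2(n-2)/n}|\nabla\phi_R|^2$, and the \emph{same} cutoff limit (which is independent of $\delta$) forces $D\equiv0$, hence equality in the refined Kato inequality on $\{|A|>0\}$, and shows that $\{|A|^{(n-2)/n}\phi_R\}$ is a null sequence for the stability operator; only then does rigidity begin. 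For the classification step your attribution is off: Bombieri--de Giorgi--Miranda concerns the Bernstein problem, not rotation hypersurfaces. The tool you want is the do Carmo--Dajczer characterization (the one this paper invokes in its proof of Theorem \ref{cor_chengzhou}) of hypersurfaces whose Codazzi shape operator has exactly two principal curvatures of multiplicities $1$ and $n-1$ as rotation hypersurfaces, followed by the elementary ODE classification of complete minimal rotation hypersurfaces, together with a connectedness argument ruling out the possibility that $|A|$ vanishes on a nonempty proper open subset.
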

\begin{remark}
The theorem was proved by Anderson \cite{anderson} and Shen \& Zhu \cite{shen_zhu} for $\delta =1$. Its extension to the present form is due to Fu \& Li \cite{fuli}, based on works \cite{anderson,cheng_zhou_1}. A different, self-contained proof was found by Pigola \& Veronelli \cite{pv}, which we also recommend for a thorough account of the literature. The $\frac{n-2}{n}$-stability of the higher dimensional catenoid and the finiteness of its total curvature were proved by Tam \& Zhou in \cite{tam_zhou}.
\end{remark}

It is tempting to ask whether Theorem \ref{teo_pv} holds in dimension $n \le 6$ (or in a smaller range) without assuming that $M$ has finite total curvature, in other words, if the Stable Bernstein Theorem can be improved to the conclusions of Theorem \ref{teo_pv}. In \cite[Theorem E.1]{cl_1}, the authors obtained a first step in this direction in dimension $n=3$ by proving that $2/3$-stability implies flatness, see also \cite[Proposition E.2]{cl_1}. Recently, the result was improved by Hong, Li \& Wang to $\delta$-stability with $\delta > 3/8$, see \cite[Theorem 1.7]{hlw}. The paper \cite{hlw} also contains various interesting results in the full range $\delta > \frac{n-2}{n}$.
%
As a consequence of \cite{hlw,cl_1} and our structure theorems, we obtain a $\delta$-stable Bernstein Theorem in $\R^4$ under the optimal bound on $\delta$:
%
\begin{theorem}\label{teo_bernstein}
	Let $M^3 \to \R^4$ be a $2$-sided, properly immersed minimal hypersurface which is $\delta$-stable for some $\delta > 1/3$. Then, $M$ is a hyperplane.
\end{theorem}
A bound on the number of ends of $\delta$-stable hypersurfaces is a main missing information we provide to get Theorem \ref{teo_bernstein}. In this respect, observe that $\delta$-stability for $\delta$ close to $\frac{n-2}{n}$ corresponds to a spectral Ricci bound \eqref{assu_basic_L} with $\beta$ close to $\frac{n-1}{n-2}$, a case for which structure theorems are harder to achieve, see \cite{liwang_ens,cheng_zhou_1}. For example, it is known by \cite{cao_shen_zhu} that stable minimal hypersurfaces in $\R^{n+1}$ must have only one end, a fact that plays a crucial role in the proofs in \cite{cl_2,clms,mazet}.
Although the argument in \cite{cao_shen_zhu} easily adapts to $\delta$-stability for any
\[
\delta \ge \frac{n-1}{n},
\]
justifying the $2/3$-stability request in  \cite{cl_1}, it fails for $\delta < \frac{n-1}{n}$. To the best of our knowledge, in full generality no bound on the number of ends of $M$ was established so far for $\delta$ close to $\frac{n-2}{n}$, and neither the assertion that $M$ has finitely many ends. We are only aware of the following result due to Cheng \& Zhou, \cite[Theorem 1.1]{cheng_zhou_1}:

\begin{theorem}[\cite{cheng_zhou_1}]\label{teo_chengzhou}
	Let $M^n \to \R^{n+1}$ be a $\frac{n-2}{n}$-stable, two-sided complete minimal hypersurface of dimension $n \ge 3$. If
	\begin{equation}\label{eq_growth_A}
		\begin{array}{ll}
			\disp \lim_{R \to \infty} \left(\sup_{B_R(o)} |A|\right) / \log R = 0 & \quad \text{if } \, n =3 \\[0.5cm]
			\text{ or} \\
			\disp \lim_{R \to \infty} \left(\sup_{B_R(o)} |A|\right) / R^{\frac{n-3}{2}} = 0 & \quad \text{if } \, n \ge 4,
		\end{array}
	\end{equation}
	where $B_R(o) \subset M$ is the intrinsic ball centered at $o$ with radius $R$, then either $M$ has only one end or it is a catenoid.
\end{theorem}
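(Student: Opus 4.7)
My plan is to argue by contradiction: assuming $M$ has at least two ends, I would show that $M$ must be (congruent to) the higher-dimensional catenoid. The strategy combines the existence of a distinguished harmonic function coming from end theory with a critical Schoen--Simon--Yau type test function in the $\frac{n-2}{n}$-stability inequality.

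First, I would produce a harmonic function associated to the ends. Since $M \hookrightarrow \R^{n+1}$ is minimal, the Michael--Simon Sobolev inequality holds on $M$, so $M$ is non-parabolic and each of its ends is non-parabolic. Li--Tam theory then yields a non-constant bounded harmonic function $u : M \to (0,1)$ with finite Dirichlet energy, taking distinct limit values on distinct ends. Setting $f = |\nabla u|$, Bochner's formula together with the Gauss equation $\Ric_M \ge -|A|^2$ and the refined Kato inequality $|\Hess u|^2 \ge \tfrac{n}{n-1}|\nabla f|^2$ for harmonic functions on an $n$-manifold gives, wherever $f > 0$,
\[
f\Delta f + |A|^2 f^2 \;\ge\; \tfrac{1}{n-1}|\nabla f|^2,
\]
identifying $f$ as a positive weak subsolution of the Jacobi operator $\Delta + |A|^2$.

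Next, I would test the $\frac{n-2}{n}$-stability inequality with $\varphi = f^p \eta$, where $\eta$ is a cutoff of logarithmic type (for $n=3$) or power type (for $n \ge 4$) supported on $B_{2R}(o)\setminus B_R(o)$. Multiplying the above subsolution inequality by $f^{2p-1}\eta^2$, integrating by parts, and combining with the expanded stability inequality, the exponent $p$ is tuned so that the cross term $\int f^{2p-1}\eta\,\nabla f\cdot\nabla \eta$ cancels with its counterpart from stability. What remains is, schematically,
\[
C_{n,p}\int_{B_R} f^{2p-2}|\nabla f|^2\eta^2 \;\le\; C \int_{B_{2R}\setminus B_R} f^{2p}|\nabla \eta|^2,
\]
where the optimal coefficient $C_{n,p}$ degenerates precisely at $\delta = \frac{n-2}{n}$. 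Using an interior gradient estimate of the form $f \lesssim |A| + R^{-1}$ for bounded harmonic functions on annuli (valid since $\Ric_M \ge -|A|^2$), the growth hypothesis \eqref{eq_growth_A} is calibrated so that the right-hand side vanishes as $R \to \infty$, thereby forcing $\nabla f \equiv 0$ and equality throughout the Bochner/Kato chain of inequalities.

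Finally, equality in the refined Kato inequality forces $|\nabla u|$ to be constant on each level set of $u$ and forces the level sets to be umbilic; equality in the Gauss equation step forces the second fundamental form of $M$ to have rank one along $\nabla u$; and a standard classification then identifies $M$ with a rotationally symmetric minimal hypersurface in $\R^{n+1}$, which, having more than one end, must be the catenoid. The hardest step is the critical-exponent calculation: $\delta = \frac{n-2}{n}$ is precisely the borderline at which the Schoen--Simon--Yau $L^p$ absorption argument ceases to leave a positive coefficient on the gradient term, and the sharp growth assumption \eqref{eq_growth_A} must therefore be exploited very tightly to make the boundary error term negligible in the limit.
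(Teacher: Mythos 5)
First, a point of reference: the paper does not prove Theorem \ref{teo_chengzhou} at all --- it is quoted from \cite{cheng_zhou_1}, and Section \ref{sec_geometry} only sketches that proof's strategy, namely a refinement of \cite[Theorem A]{liwang_ens} built on the weighted Poincar\'e (Hardy) inequality with weight $\tfrac{(n-2)^2}{4r^2}$, $r$ the extrinsic distance. Your route (Li--Tam harmonic function, Bochner with refined Kato, Schoen--Simon--Yau absorption in the stability inequality) is therefore genuinely different from the cited one --- but it has a gap exactly at the point you flag as ``the hardest step''. Carrying out your scheme with $\varphi=f^p\eta$ and the sharp Gauss bound $\Ric(\nabla u,\nabla u)\ge -\tfrac{n-1}{n}|A|^2f^2$, one arrives at
\[
\Bigl[\tfrac{\delta n}{n-1}\bigl(2p-1+\tfrac{1}{n-1}\bigr)-p^2\Bigr]\int f^{2p-2}|\nabla f|^2\eta^2
\;\le\; 2\bigl(p-\tfrac{\delta n}{n-1}\bigr)\int f^{2p-1}\eta\,\nabla f\cdot\nabla\eta+\int f^{2p}|\nabla\eta|^2 ,
\]
and the left-hand coefficient, maximized over $p$ (at $p=\tfrac{\delta n}{n-1}$, which also kills the cross term), equals $\tfrac{\delta n}{(n-1)^2}\bigl(\delta n-(n-2)\bigr)$. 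At $\delta=\tfrac{n-2}{n}$ this is \emph{exactly zero}, so the inequality collapses to $0\le\int f^{2p}|\nabla\eta|^2$, which is vacuous. No growth assumption on $|A|$ can rescue this: making the right-hand side tend to $0$ only yields $0\le 0$. This is precisely why the critical exponent requires a different mechanism (the extra Hardy weight in Cheng--Zhou, criticality theory in the present paper), and your proposal does not supply one.

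There is also an internal inconsistency in your endgame. If the argument really forced $\nabla f\equiv 0$, then $|\nabla u|$ would be constant; finite Dirichlet energy and the infinite volume of a complete minimal hypersurface in $\R^{n+1}$ would force $u$ to be constant, i.e.\ $M$ would have one end --- so your proof could never produce the catenoid, which is a legitimate two-ended, $\tfrac{n-2}{n}$-stable example with bounded $|A|$ (indeed $|\nabla u|$ is non-constant on it). A correct borderline argument must instead extract the \emph{deficits} in the refined Kato inequality and in the estimate $\sum_j A_{1j}^2\le\tfrac{n-1}{n}|A|^2$, show these vanish identically, and only then run the rigidity/classification step; your write-up asserts ``equality throughout the chain'' without setting up the non-negative deficit terms whose vanishing would have to be proved, and without verifying that $\int f^{2p}|\nabla\eta|^2\to 0$ for the sub-unit exponent $p=\tfrac{n-2}{n-1}$, which is exactly where \eqref{eq_growth_A} and the volume/Hardy-weight control must enter.
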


Observe that \eqref{eq_growth_A} holds, for instance, if $|A|$ is bounded on $M$. Cheng and Zhou kindly suggested to the first and second author the question whether \eqref{eq_growth_A} is removable, which indeed was the starting point of the present investigation. We are able to positively anwer in dimension $n=3$:

\begin{theorem}\label{cor_chengzhou}
	Let $M^3 \to \R^{4}$ be a complete, two-sided, $1/3$-stable minimal hypersurface. Then, either $M$ has only one end or it is a catenoid.
\end{theorem}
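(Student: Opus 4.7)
\noindent Assume for contradiction that $M$ has at least two ends. Since $N=\R^4$ is flat, $\overline{\Ric}(\nu,\nu)\equiv 0$ and $1/3$-stability of $M$ is exactly the non-negativity, as a quadratic form on $C^\infty_c(M)$, of the Schr\"odinger operator
\[
L \;:=\; \Delta + V, \qquad V \;:=\; \tfrac{1}{3}|A|^2.
\]
On the other hand, for a minimal hypersurface in $\R^{n+1}$ the Gauss equation together with the constraint $\mathrm{tr}(A)=0$ gives the pointwise Ricci lower bound $\Ric_M \geq -\tfrac{n-1}{n}|A|^2$; for $n=3$ this reads $\Ric_M \geq -2V$. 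Thus $\delta = 1/3$ is precisely the threshold at which the stability and the Ricci lower bound are governed by the \emph{same} potential $V$, bringing the problem squarely into the scope of the criticality and spectral splitting results announced in the abstract. Note that this coincidence is proper to $n=3$, where $\tfrac{n-2}{n}$ and $\tfrac{n-1}{2n}$ both equal $1/3$.

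\medskip

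\noindent The plan proceeds in two main moves. First, I would apply the criticality criterion for $L = \Delta + V$ on manifolds with at least two ends under the coupling $L\geq 0$ and $\Ric_M\geq -2V$: heuristically, a positive minimal Green function for $L$ with two-sided decay is incompatible with such a topology in the presence of the spectral Ricci bound, forcing $L$ to be \emph{critical}, i.e., to admit a positive $u\in C^\infty(M)$ with $Lu=0$, unique up to positive multiples (a ground state). Second, I would invoke the associated spectral splitting theorem on the pair $(u,V)$ to obtain an isometric warped-product decomposition
\[
M \;\cong\; \R \times_\phi \Sigma^{2},
\]
with $u=u(t)$ depending only on the $\R$-factor and with $\phi$ determined by the ODE arising from the equality case of a Bochner identity applied to $\log u$. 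To finish, I would rule out the trivial case $V\equiv 0$ (which would make $M$ a hyperplane, hence one-ended, contradicting our standing assumption) and then apply the classical ODE classification of rotationally symmetric minimal hypersurfaces in $\R^{4}$ to identify $\Sigma^2$ with the round $\Sph^2$ and $\phi$ with the catenary profile, so that $M$ is (congruent to) the standard $3$-dimensional catenoid.

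\medskip

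\noindent The main obstacle is the spectral splitting step: one must upgrade the pair (integral stability, pointwise Ricci bound) to a pointwise warped-product rigidity via a refined Bochner formula, and close the ensuing cutoff argument \emph{without} any a priori growth assumption on $|A|$ --- precisely what would allow one to drop the hypothesis \eqref{eq_growth_A} of Theorem \ref{teo_chengzhou}. The delicate technical points are: controlling the boundary integrals at infinity along each of the two ends, ensuring enough integrability of $u$ and $|\nabla\log u|$ to make the cutoff procedure close, and exploiting dimension $n=3$ in order to achieve the sharp constant in the Cauchy--Schwarz/Kato step that converts the borderline inequality $\Ric_M + 2V \geq 0$ into the pointwise splitting identity. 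Once the warped decomposition is in hand, the identification with the catenoid is routine.
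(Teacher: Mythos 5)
Your first step is exactly the paper's: with $V=\tfrac13|A|^2$ one has $L=\Delta+V\ge 0$ and, via Gauss plus the traceless (refined Kato) estimate, $\Ric\ge-\tfrac{n-1}{n}|A|^2=-\tfrac{n-1}{n-2}V$ for $n=3$, which is precisely the borderline coupling $\beta=\tfrac{n-1}{n-2}=2$ of Theorem \ref{teo_main_critical}; that theorem (no growth hypothesis needed when $n=3$) then forces $L$ to be critical if $M$ has two ends. Up to here you are on track.

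The gap is your second move. There is no ``spectral splitting theorem'' available at this coupling that produces a warped product $\R\times_\phi\Sigma^2$. The splitting result of the paper (Theorem \ref{teo_main_critical_2}) requires $\beta<\tfrac{4}{n-1}=2$ (or $V_+$ compactly supported, which fails here since $|A|>0$ everywhere on a catenoid), and its conclusion is much stronger than what you want: it gives $V\equiv 0$ and a genuine Riemannian \emph{product} $\R\times P$ — which the catenoid does not satisfy. So if such a splitting applied, it would erroneously exclude the catenoid rather than produce it; conversely, the equality analysis of the second variation along the $\bar g$-line only yields information \emph{along that single geodesic} (e.g.\ $\nabla^\perp\log u=0$ and saturation of the Ricci bound on $\gamma$), not a global warped decomposition, and upgrading it would require exactly the hard work you defer to ``a refined Bochner formula.'' What the paper actually extracts from criticality is the \emph{pointwise} conclusion \eqref{eq_lowereige_Ric}: at every point the chain of inequalities in \eqref{eq_gauss} is saturated, so the refined Kato inequality for the traceless tensor $A$ is an equality, i.e.\ $A$ has exactly two principal curvatures of multiplicities $1$ and $2$ on $U=\{|A|>0\}$. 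The catenoid then emerges not from an intrinsic splitting but from the extrinsic classification of Do Carmo--Dajczer (\cite[Theorem 4.4]{DoCD}): any such piece of $U$ is a piece of a rotational catenoid, and an open-closed argument ($|A|$ cannot vanish on $\partial U'$ because the catenoid's second fundamental form is strictly positive) plus Ambrose's theorem and simple connectedness of the $3$-catenoid give $U=M$ and a global isometry. Also note that $V\equiv 0$ does not immediately ``make $M$ a hyperplane'' in your framework — you only know $|A|\equiv 0$, hence totally geodesic, and then completeness identifies it with a hyperplane; this part is minor, but the missing Kato-equality/Do Carmo--Dajczer mechanism is essential and cannot be replaced by the splitting theorem you invoke.
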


Theorem \ref{cor_chengzhou} is a corollary of a more general result, Theorem \ref{teo_minimal_main} below, where we study the topology of $1/3$-stable minimal hypersurfaces $M^3 \to N^4$ into a $4$-manifold with non-negative sectional curvature. The peculiarity of dimension $n=3$ will be clear in what follows, see in particular Theorem \ref{teo_main_critical}.

\subsection*{Structure theorems and novelties of our approach}

Besides Li \& Wang's \cite{liwang_positive_1,liwang_positive_2,liwang_ens},  
manifolds satisfying 
\[
\Ric \ge - \beta V g, \qquad L \doteq -\Delta - V \ge 0
\]
were studied by Cheng \cite{xucheng}, Cheng \& Zhou \cite{cheng_zhou_1} and Pigola, Rigoli \& Setti \cite{prs_JFA,prs_book} (see also B\'erard \cite{berard}). More recently, we mention the works of Bour \& Carron \cite{boucar}, Carron \cite{car}, Carron \& Rose \cite{carrose} and Antonelli \& Xu \cite{antonelli_xu,anto_xu_gluing}. The investigation highlighted three ranges for $\beta$:

\begin{itemize}
	\item[(i)] $\beta \le \frac{n-1}{n-2}$: in this range, under some further assumptions on $M$ one is able to control the number of non-parabolic ends, see \cite[Theorem A and Corollary 4.2]{liwang_ens}.
	\item[(ii)] $\beta \le \frac{4}{n-1}$: this range also allows to control the number of parabolic ends of $M$, see \cite[Theorems B,C]{liwang_ens} and \cite{xucheng}.
	\item[(iii)] $\beta < \frac{1}{n-2}$: in this range, $M$ looks close to a manifold with $\Ric \ge 0$, as it will become apparent in Theorem \ref{teo_main_critical_3} below. Analogies become tighter if one further assumes the \emph{gaugeability} of $L$, i.e. the existence of a positive solution to $\Delta u + Vu \le 0$ bounded below and above by positive constants, see \cite{car,carrose,irv}.	
\end{itemize}

\begin{remark}\label{rem_n3}
	Note that $\frac{n-1}{n-2} \ge \frac{4}{n-1}$, with equality if and only if $n=3$. This accounts for most of the extra information we obtain in dimension $3$.
\end{remark}

%

In this paper, we consider each of the ranges  (i),(ii),(iii). Differently from \cite{liwang_ens} and from \cite{cheng_zhou_1,car,carrose,antonelli_xu}, our structure theorems
rely on a new point of view on the ``conformal method" pioneered by Schoen \& Yau \cite{schoen_yau_cmp}, Fischer-Colbrie \cite{fischercolbrie}, Shen \& Ye \cite{shen_ye_min} and Shen \& Zhu \cite{shen_zhu_MathAnn}. The method exploits a positive solution to 
\[
\Delta u + Vu \le 0, 
\]
whose existence is granted by the second in \eqref{assu_basic_L}, to conformally change the metric via $\bar g = u^{2\beta} g$. This allows to ``push-up" the curvature contribution in the second variation formulas for $\bar g$-geodesics. However, so far the idea was mostly used to prove compactness results in the spirit of Bonnet \& Myers' one, see \cite{schoen_yau,shen_ye_min,shen_ye,xucheng_H,enr,mmrs,cr}. 
Exceptions are \cite{fischercolbrie,xucheng,cmr} and \cite{cmm}, where it was employed to control the geometry of complete minimal hypersurfaces and to prove the rigidity of critical metrics for a quadratic curvature functional, respectively. 
Still, the splitting problem was not addressed there, and requires to rework the method from the onset.

A major issue appearing, for instance, in the proof of Theorem \ref{chegrointro} is to extend the information obtained on a single $\bar g$-geodesic to the whole manifold. To this aim,
an original contribution of the present paper is to link the conformal method to the {\em criticality theory} for operators $-\Delta-V \ge 0$ developed by Murata \cite{murata} and Pinchover \& Tintarev \cite{pinchover, pinchover2, pincho_tinta,pincho_tinta_p} (see also  Zhao \cite{zhao}). Roughly speaking, the criticality theory extends the standard dichotomy between parabolic and non-parabolic manifolds to the case of possibly nonzero potentials $V$. 
The conclusions of our main Theorems \ref{teo_main_critical} and \ref{teo_main_critical_2} (i.e. Theorem \ref{chegrointro}) can be summarized as follows: either $M$ has only one end, or $-\Delta - V$ is critical and the first inequality in \eqref{assu_basic_L} is saturated at every point by at least one vector. More precisely:

\begin{itemize}
	\item in Theorem \ref{teo_main_critical}, we obtain such a conclusion in the range (i). However, we need a further constraint in dimension $n \ge 4$ (see Example \ref{ex_1_liwang}), in accordance to statement (i) above and Remark \ref{rem_n3}: under condition $\beta \le \frac{n-1}{n-2}$ one hardly detects parabolic ends if $n \ge 4$. The constraint is not necessary if $V_+$ is compactly supported, i.e. if $\Ric \ge 0$ outside of a compact set, see Theorem \ref{teo_main_critical_cs};
	\item in Theorem \ref{teo_main_critical_2}, Theorem \ref{chegrointro} is proved. A key point in the argument is a convexity lemma in criticality theory, see Theorem \ref{teo_subcrit}. 
\end{itemize}
\noindent In the range
\[
\beta < \frac{1}{n-2},
\]
we prove in Theorem \ref{teo_main_critical_3} that for \emph{any} choice of a positive (smooth) solution to $\Delta u + Vu \le 0$, the metric
\[
\bar g = u^{2\beta} g
\]
is a \emph{complete} metric with non-negative modified Bakry-\'Emery Ricci tensor
\[
\overline{\Ric}_f^N \doteq \overline{\Ric} + \bar{\nabla}^2f -\frac{1}{N-n} \di f\otimes \di f,
\]
for $f = (n-2)\beta \log u$ and a suitable $N = N(n,\beta) > n$. The result, which we obtain by adapting \cite{cmr}, establishes a higher-dimensional analogue of Fischer-Colbrie's \cite[Theorem 1]{fischercolbrie} and enables to exploit the extensive literature on manifolds with 
\[
\overline{\Ric}_f^N \ge 0 
\]
and algebraic dimension $N > n$. These manifolds exhibit strong analogies to those with non-negative Ricci curvature: for instance, Corollaries \ref{cor_car_1} and \ref{cor_car_2} guarantee bounds on the Betti numbers of $M$ and further topological consequences.

\begin{remark}
	The inequality $\overline{\Ric}^N_f \ge 0$ was also pointed out in \cite[Corollary 2.3]{car_mon_tew}.
\end{remark}

\begin{remark}
	The criteria in Theorems \ref{teo_main_critical}, \ref{teo_main_critical_cs} and \ref{teo_main_critical_3} also relate to an elegant rigidity result by Castillon \cite{castillon} for surfaces satisfying
	\[
	-\Delta + \beta K \ge 0
	\]
for some $\beta \ge 0$, where $K$ is the Gaussian curvature. The  result was later extended by various authors, see in particular \cite{berard_castillon}.
\end{remark}

\vspace{0.2cm}

The paper is organized as follows: in Section \ref{sec_critical}, we recall the main results in criticality theory needed throughout the paper. In Section \ref{sec_criteria}, we prove the abstract criteria in Theorems \ref{teo_main_critical}, \ref{teo_main_critical_cs}, \ref{teo_main_critical_2} and \ref{teo_main_critical_3} and their corollaries. In Section \ref{sec_geometry}, we deduce our geometric consequences.

\begin{note}
	When we were completing this paper, we learned that the authors of \cite{apx} had independently obtained results that partially overlap with ours, with different techniques. We agreed with them to post the results on arXiv independently at the same time.
\end{note}

Inspecting the proof of the spectral splitting theorem in \cite{apx}, there seems to be an intriguing duality between the combined conformal method+criticality approach in the present paper and the $\mu$-bubble technique by Gromov \cite{gromov}. We think it could be interesting to investigate if this connection can be given a more rigorous formulation.

\section{Criticality theory for $-\Delta - V$}\label{sec_critical}

Let $M$ be a Riemannian manifold. Hereafter, an exaustion $\{\Omega_j\}$ of $M$ is a collection of relatively compact open sets with smooth boundary satisfying
\[
\Omega_j \Subset \Omega_{j+1} \Subset M, \qquad \bigcup_{j=1}^\infty \Omega_j = M.
\]
Let $V \in L^\infty_\loc(M)$ and consider the operator
\[
L_V \doteq -\Delta - V.
\]
Assume that $L_V$ is non-negative in the spectral sense (we write $L_V\ge 0$), namely, that the associated quadratic form
\[
Q_V(\phi) \doteq (\phi, L_V\phi)_{L^2} = \int_M \Big[ |\nabla \phi|^2 - V \phi^2\Big]
\]
is non-negative for each $\phi \in \lip_c(M)$, the set of compactly supported Lipschitz functions.

\begin{notation}
Hereafter, given $V_1, V_2 \in L^\infty_\loc(M)$ we will say that $L_{V_1} \ge L_{V_2}$ whenever $Q_{V_1}(\phi) \ge Q_{V_2}(\phi)$ for each $\phi \in \lip_c(M)$. This is equivalent to say that $V_1 \le V_2$ a.e. on $M$.
\end{notation}

It is well-known, see for instance \cite[Lemma 3.10]{prs_book}, that $L_V \ge 0$ is equivalent to the existence of a positive weak solution $0< u \in H^1_\loc(M)$ to $L_V u \ge 0$, and also equivalent to the existence of a weak solution $0 < u \in C^1(M)$ to $L_Vu = 0$. The criticality theory discussed below, due to Murata \cite{murata} and Pinchover \& Tintarev \cite{pinchover, pinchover2, pincho_tinta}, describes the geometry of the cone of positive solutions to $L_V u \ge 0$, see also the work of Zhao \cite{zhao}.

\begin{definition}\label{def_weighted}
	For $V  \in L^\infty_\loc(M)$, define $L_V = -\Delta -V$ and let $\Omega \subseteq M$ be an open set.
	\begin{itemize}
		\item[-] $L_V$ is \textbf{subcritical} in $\Omega$ if there exists $w \ge 0$, $w\not\equiv 0$ in $\Omega$ (called a \textbf{Hardy weight}) such that
		\begin{equation}\label{114}
			\disp \int_\Omega w |\phi|^2 \le Q_V (\phi) \qquad \forall \ \phi \in \lip_c (\Omega).
		\end{equation}
		Otherwise, the operator $L_V$ is said to be \textbf{critical}. \vspace{0.1cm}
		\item[-] $L_V$ has a \textbf{weighted spectral gap} in $\Omega$ if there exists $W \in C(\Omega)$, $W>0$ on $\Omega$ such that
		\begin{equation}\label{114}
			\disp \int_\Omega W |\phi|^2 \le Q_V (\phi) \qquad \forall \ \phi \in \lip_c (\Omega).
		\end{equation}
		\vspace{0.1cm}
		\item[-] A sequence $\{\phi_j\} \in L^\infty_c(\Omega) \cap H^1(\Omega)$ is said to be a \textbf{null sequence} if  $\phi_j \ge 0$ a.e. for each $j$, $Q_V(\phi_j) \to 0$ as $j \to \infty$ and there exists a relatively compact open set $B\Subset M$ and $C>1$ such that $C^{-1} \le \|\phi_j\|_{L^2(B)} \le C$ for each $j$.
		\vspace{0.1cm}
		\item[-] A function $0 \le \eta \in H^1_\loc(\Omega)$, $\eta \ge 0$, $\eta \not \equiv 0$ is a \textbf{ground state} for $L_V$ on $\Omega$ if it is the $L^2_\loc(\Omega)$ limit of a null sequence.
	\end{itemize}
\end{definition}
We have the following fundamental result, known as the ground state alternative, \cite{murata,pincho_tinta}. The version stated below also includes some further implications, which can be found in \cite[Theorem 4.1]{bmr}.
\begin{theorem}\label{teo_alternative}
	Let $M$ be connected and non-compact, and consider an operator $L_V \ge 0$ with $V \in L^\infty_\loc(M)$.  Then, either $L_V$ has a weighted spectral gap or a ground state on $M$, and the two possibilities mutually exclude. Moreover, the following properties are equivalent:
	\begin{itemize}
		\item[$(i)$] $L_V$ is subcritical on $M$;
		\item[$(ii)$] $L_V$ has a weighted spectral gap on $M$;
		\item[$(iii)$] There exist two positive solutions $u_1,u_2 \in C(M) \cap H^1_\loc(M)$ of $L_V u \ge 0$ which are not proportional;
		\item[$(iv)$] for some (equivalently, each) $o \in M$, there exists a minimal positive distributional solution $G$ to $L_VG = \delta_o$;
		\item[$(v)$] For some (equivalently, any) $K \Subset M$ compact with non-empty interior, and for some (any) $0< \xi \in C(M) \cap H^1_\loc(M)$ solving  $L_V \xi \ge 0$, it holds
		$$
		\disp \capac_{V}(K, \xi) \doteq \disp  \inf_{\phi \in \mathscr{D}(K, \xi) } Q_V(\phi) > 0,
		$$
		where
		$$
		\mathscr{D}(K, \xi)  = \Big\{ \phi \in \lip_c(M) \ : \ \phi \ge \xi \ \text{ in } K\Big\}.
		$$
	\end{itemize}
	If $L_V$ has a ground state $\eta$, then:
	\begin{itemize}
		\item $\eta>0$ on $M$ and solves $L_V \eta =0$ (in particular, $\eta \in C^{1,\mu}_\loc(M)$). Moreover, every other positive solution $\xi$ to $L_V \xi \ge 0$ is a multiple of $\eta$, hence a ground state;
		\item there exists a positive function $W>0$ such that for each $\psi \in C^\infty_c(M)$ satisfying
	\[
	\int \psi \eta \neq 0,
	\]
	there exists a constant $C = C(M,W,\psi,V)$ such that
	\begin{equation}\label{eq_poincare}
	C^{-1} \int_M W \phi^2 \le Q_V(\phi) + C \left| \int_M \psi \phi\right|^2 \qquad \forall \, \phi \in \lip_c(M).
	\end{equation}
	\end{itemize}
	%
\end{theorem}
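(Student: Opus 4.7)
My plan is to anchor the proof on the standard exhaustion/eigenvalue construction. Fix $o \in M$ and an exhaustion $\{\Omega_j\}$, and let $\lambda_j := \lambda_1^{\mathrm{Dir}}(L_V, \Omega_j)$ with positive Dirichlet eigenfunction $\phi_j$ normalized by $\phi_j(o) = 1$. Monotonicity gives $\lambda_j \searrow \lambda_\infty \ge 0$, nonnegativity being forced by $L_V \ge 0$ globally, and Harnack plus Schauder estimates yield a subsequence converging in $C^{1,\mu}_\loc$ to a positive $u$ solving $L_V u = \lambda_\infty u \le 0$. The dichotomy between spectral gap and ground state then follows from a second extraction: if no weighted spectral gap exists, then for each positive continuous $W$ one can find $\phi_j \ge 0$ with $Q_V(\phi_j) < j^{-1}\int W \phi_j^2$; a careful diagonal choice together with $L^2_\loc$ normalization produces a null sequence whose limit is a ground state. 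Mutual exclusivity is immediate from the definitions, as a null sequence would violate any weighted Hardy inequality with a strictly positive weight.

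The core of the argument is the chain of equivalences, and the nontrivial implication is $(iii) \Rightarrow (ii)$, which I would handle via the Agmon--Pinchover \emph{supersolution construction}: given two non-proportional positive solutions $u_1, u_2$ of $L_V u \le 0$, the geometric mean $v = \sqrt{u_1 u_2}$ satisfies, in the sense of distributions,
\[
-L_V v \ \ge\ \tfrac{1}{4}\,\bigl|\nabla \log(u_1/u_2)\bigr|^2\, v.
\]
Substituting $\phi = v\psi$ in $Q_V(\phi)$ and integrating by parts produces a continuous positive Hardy weight $W = \tfrac14|\nabla \log(u_1/u_2)|^2$, which does not vanish identically precisely because $u_1, u_2$ are not proportional. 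The remaining implications are more routine: $(ii)\Rightarrow(i)$ is trivial; $(i)\Rightarrow(ii)$ needs approximation of the Hardy weight $w$ by a positive continuous one via a locally finite partition of unity; $(ii)\Rightarrow(iv)$ constructs the minimal Green's function by passing to the limit the Dirichlet Green's functions $G_{\Omega_j}(\cdot, o)$, with the spectral gap providing the compactness that prevents blowup away from $o$; $(iv)\Rightarrow(iii)$ holds because $G(\cdot,o)$ and any positive global solution cannot be proportional (different singular behaviour at $o$); and $(ii)\Leftrightarrow(v)$ is classical variational potential theory, exploiting cut-offs of $\xi$.

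In the critical regime with ground state $\eta$, the positivity $\eta > 0$ and the equation $L_V \eta = 0$ follow from Harnack and elliptic regularity applied to the null sequence. Uniqueness up to scalars is the contrapositive of $(iii) \Rightarrow (ii)$ applied to $\eta$ and a hypothetical non-proportional positive supersolution $\xi$: the resulting spectral gap would contradict $Q_V(\phi_j) \to 0$ with $\|\phi_j\|_{L^2(B)} \ge C^{-1}$. For the improved Poincar\'e inequality \eqref{eq_poincare}, I plan a Fredholm-alternative argument: $0$ is a simple eigenvalue of the Friedrichs realization of $L_V$ with eigenvector $\eta$, so the rank-one perturbation $C\,\bigl|\!\int \psi\phi\bigr|^2$ with $\int \psi\eta \neq 0$ lifts the degeneracy, and a two-scale argument (local spectral gap near a large compact set from the simple-eigenvalue property, global extension via subcriticality of $L_V$ outside that compact set once the ground state has been removed) produces the desired positive $W$. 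The main obstacle I foresee is the rigorous verification of the Agmon--Pinchover identity in the weak regularity class $u_1, u_2 \in C^1 \cap H^1_\loc$: one must handle points where $u_1 u_2$ degenerates and justify the integration by parts carefully. That is the one computation I would carry out in detail rather than quote.
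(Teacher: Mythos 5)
Be aware that the paper does not actually prove this theorem: its ``proof'' is a recap that delegates everything to the literature (all statements except $(iv)$ to \cite[Theorem 4.1]{bmr}, the equivalence $(i)\Leftrightarrow(iv)$ to \cite[Theorem 2.4]{murata}, and inequality \eqref{eq_poincare} to \cite{pincho_tinta,pincho_tinta_p}). So your attempt is necessarily a different route in the sense that you are reconstructing the proofs from scratch; what you reconstruct is, in outline, exactly the Agmon--Murata--Pinchover--Tintarev machinery that those references contain. The core computation you single out is correct: for positive supersolutions $u_1,u_2$ one indeed has $-L_V\sqrt{u_1u_2}\ge \tfrac14|\nabla\log(u_1/u_2)|^2\sqrt{u_1u_2}$, and plugging $\phi=v\psi$ into $Q_V$ converts a supersolution inequality $-L_Vv\ge Wv$ into the Hardy inequality $\int W\phi^2\le Q_V(\phi)$. (Note that since $u_1,u_2$ are positive and continuous, $u_1u_2$ is locally bounded away from zero, so the ``degeneration'' you worry about does not occur; the only care needed is the weak integration by parts in $H^1_\loc$.) Two smaller points: this construction yields a Hardy weight that may vanish on large sets, so it proves $(iii)\Rightarrow(i)$, and the upgrade $(i)\Rightarrow(ii)$ is not an ``approximation'' of $w$ --- one must first show that subcriticality produces, for every compact $K$, a Hardy weight bounded below on $K$ (via the minimal Green function or the capacity characterization $(v)$), and then sum a locally finite family $\sum_j\eps_jW_j$. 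Also, in $(iv)\Rightarrow(iii)$ the Green function is not in $H^1_\loc$ across the pole, so to land in the stated regularity class you must truncate it, e.g.\ replace $G$ by $u\,\eta(G/u)$ for a concave cutoff $\eta$ and a global positive solution $u$ (the paper itself performs exactly this truncation in the proof of Theorem \ref{teo_main_critical}).

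The genuine soft spot is your treatment of the critical case. A null sequence consists of test functions, not solutions, so ``Harnack plus elliptic regularity applied to the null sequence'' does not directly give that the $L^2_\loc$ limit $\eta$ is positive and solves $L_V\eta=0$. The standard mechanism, which your sketch omits, is the Doob ($h$-)transform: writing $\phi_j=u\psi_j$ for a fixed positive solution $u$, one has $Q_V(u\psi_j)=\int u^2|\nabla\psi_j|^2$, so criticality becomes the statement that the weighted manifold $(M,u^2\,\di x)$ has null capacity (is ``parabolic''), $\psi_j$ converges to a constant, and hence every ground state is a multiple of $u$; positivity, the equation, and uniqueness up to scalars all drop out of this identification rather than from regularity theory applied to $\phi_j$. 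Likewise, your Fredholm-alternative sketch for \eqref{eq_poincare} is the least convincing part: $0$ need not be an eigenvalue of the Friedrichs extension in any $L^2$ space (the ground state is generally not in $L^2$), so ``simple eigenvalue plus rank-one perturbation'' is not directly available; the actual proof in \cite{pincho_tinta} works in the completion of $C^\infty_c$ with respect to $Q_V+|\int\psi\,\cdot\,|^2$ and again rests on the $h$-transform. These gaps are fixable by standard means, but as written they are the places where the argument would not yet compile into a proof.
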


\begin{proof}[Proof (recap)]
	All the statements, apart from those regarding property $(iv)$, are contained in  \cite[Theorem 4.1]{bmr}. The equivalence $(i) \Leftrightarrow (iv)$ is the content of \cite[Theorem 2.4]{murata}.
	Inequality \eqref{eq_poincare} was proved in \cite{pincho_tinta,pincho_tinta_p}, see also \cite{weidl} for a related result.
\end{proof}

\begin{remark}\label{rem_hardy}
	Theorem \ref{teo_alternative} extends, to operators with a potential, the dichotomy between pa\-ra\-bolic and non-parabolic manifolds, which are exactly the manifolds for which $-\Delta$ is critical and subcritical, respectively. If $-\Delta$ is subcritical, by \cite{liwang_ens} a Hardy weight can be constructed from any non-constant positive solution $u$ of $\Delta u \le 0$ as
	\[
	w = \frac{|\nabla u|^2}{4u^2}.
	\]
	In particular, if $u$ is the Green kernel of $-\Delta$ on $\R^n$, $n \ge 3$ and $r$ is the distance to a fixed point, we recover the well-known Hardy weight
	\begin{equation}\label{eq_hardy_Rn}
	\frac{(n-2)^2}{4r^2},
	\end{equation}
	As shown in \cite{carron_hardy}, the same Hardy weight also occurs for minimal submanifolds $M^n \to N^p$ in a Cartan \& Hadamard manifold, where now $r$ is the restriction to $M$ of the extrinsic distance from a fixed origin of $N$. More general Hardy weights, including sharpened ones for minimal submanifolds in hyperbolic space, can be found in \cite[Section 5]{bmr} and \cite[Example 1.8]{liwang_ens}, see also \cite{bmr_mem} for geometric applications.
\end{remark}

\begin{remark}
	As a direct application of the ground state alternative, if $M$ satisfies a weighted Sobolev inequality
	\[
	\left( \int_M \eta |\phi|^{\frac{2\nu }{\nu -2}} \right)^{\frac{\nu -2}{\nu}} \le \So \int |\nabla \phi|^2 \qquad \forall \, \phi \in \lip_c(M)
	\]
	for some $0 < \eta \in C(M)$, $\So > 0$ and $\nu > 2$, then $-\Delta$ is subcritical.
\end{remark}

\begin{remark}
	We stress that Theorem \ref{teo_alternative} also holds for homogeneous, $p$-Laplace operators
	\[
	-\diver\left( |\nabla u|^{p-2}\nabla u\right) - V |u|^{p-2} u,
	\]
	see \cite{pincho_tinta_p,bmr}. Note that for $p \neq 2$ one cannot avail of the Doob transform to reduce the result to the case $V \equiv 0$ up to adding suitable weights to the Laplacian. See \cite[Section 2.3]{car} for more information on the Doob transform.
\end{remark}

The next strict convexity property, which will be crucial for us in what follows, was shown in \cite[Theorem 3.1]{pinchover2} in the Euclidean setting. An alternative proof can be found in \cite[Proposition 4.3]{pincho_tinta_p}. Both arguments hold verbatim on manifolds, and we reproduce the one in \cite{pincho_tinta_p} for the sake of completeness.

\begin{theorem}[\cite{pinchover2, pincho_tinta_p}]\label{teo_subcrit}
	Let $V_0,V_1 \in L^\infty_\loc(M)$ and assume that $L_{V_0} \ge 0$, $L_{V_1} \ge 0$. Then, setting $V_t = (1-t)V_0 + tV_1$ for $t \in [0,1]$, it holds $L_{V_t} \ge 0$. Moreover, if $V_0$ does not coincide with $V_1$ a.e., for each $t \in (0,1)$ the operator $L_{V_t}$ is subcritical on $M$. In other words,
	\[
	\mathscr{K} \doteq \{V \in L^\infty_\loc(M) : L_V \ge 0\}
	\]
	is a convex set whose extremal points are those $V$ for which $L_V$ is critical.
\end{theorem}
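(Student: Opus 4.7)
The first claim follows directly from the affine dependence of the quadratic form on the potential:
\[
Q_{V_t}(\phi) = (1-t)Q_{V_0}(\phi) + t Q_{V_1}(\phi),
\]
so whenever both right-hand summands are non-negative for every $\phi \in \lip_c(M)$, so is the left-hand side. This gives convexity of $\mathscr{K}$.

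For the sharper second statement my plan is to argue by contradiction using the null-sequence characterization of criticality furnished by Theorem \ref{teo_alternative}. Suppose $V_0 \neq V_1$ on a set of positive measure and that $L_{V_t}$ is critical for some $t \in (0,1)$. Then $L_{V_t}$ admits a ground state $\eta$, realized as the $L^2_{\loc}$-limit of a null sequence $\{\phi_j\}$ with $\phi_j \ge 0$, $Q_{V_t}(\phi_j) \to 0$, and $C^{-1} \le \|\phi_j\|_{L^2(B)} \le C$ on some fixed relatively compact open $B$. The affine decomposition above then reads
\[
(1-t)Q_{V_0}(\phi_j) + t Q_{V_1}(\phi_j) \to 0,
\]
and since both summands are non-negative (by the hypothesis $L_{V_0}, L_{V_1} \ge 0$) and $t \in (0,1)$, each summand must vanish in the limit. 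Since the remaining conditions in the definition of null sequence (sign of $\phi_j$, compact support, $L^2$-bounds on $B$) do not involve the potential, $\{\phi_j\}$ is simultaneously a null sequence for $L_{V_0}$ and for $L_{V_1}$. Consequently $\eta$ is a ground state for both operators, and by the last part of Theorem \ref{teo_alternative} it satisfies $\eta > 0$ on $M$ and $L_{V_0}\eta = 0 = L_{V_1}\eta$ in the distributional sense. Subtracting these two equalities yields $(V_0 - V_1)\eta \equiv 0$, and the strict positivity of $\eta$ forces $V_0 = V_1$ a.e., contradicting the hypothesis.

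The extremal-point reformulation is then a direct corollary. If $L_V$ is critical, the just-proved statement applied to any representation $V = (1-t)V_0 + tV_1$ with $V_0, V_1 \in \mathscr{K}$ and $t \in (0,1)$ forces $V_0 = V_1 = V$. Conversely, if $L_V$ is subcritical with Hardy weight $w \ge 0$, $w \not\equiv 0$, one checks that both $V \pm w$ lie in $\mathscr{K}$ (the Hardy inequality handles $V + w$, while $V - w$ is trivial since $Q_{V-w}(\phi) = Q_V(\phi) + \int w\phi^2 \ge 0$), so $V = \tfrac12((V-w)+(V+w))$ exhibits $V$ as a non-trivial convex combination, hence it is not extremal. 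The delicate step throughout is the transfer of the null-sequence property from $L_{V_t}$ down to each endpoint operator; it works only because both endpoint quadratic forms are non-negative, so that no cancellation can take place in the convex decomposition and each piece is forced to vanish individually.
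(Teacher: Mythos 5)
Your proof is correct, and the core step is genuinely different from (and simpler than) the paper's. The paper first disposes of the case where one endpoint operator is subcritical (its Hardy weight, rescaled by $t$ or $1-t$, serves for $L_{V_t}$), and then, in the remaining case where both $L_{V_0}$ and $L_{V_1}$ are critical, runs the contradiction through the Poincar\'e-type inequality \eqref{eq_poincare}: it picks test functions $\psi_i$ with $\int\psi_i\eta_i\neq 0$ but $\int\psi_i\eta=0$, takes the convex combination of the two inequalities, and feeds in the null sequence to force $\int W\eta^2\le 0$. You instead observe that the affine identity $Q_{V_t}=(1-t)Q_{V_0}+tQ_{V_1}$, with both summands non-negative and $t\in(0,1)$, transfers the null sequence of $L_{V_t}$ verbatim to both endpoint operators (the remaining defining conditions are potential-independent), so the same $\eta$ is a ground state for all three; positivity of $\eta$ and $L_{V_0}\eta=0=L_{V_1}\eta$ then give $(V_0-V_1)\eta\equiv 0$, hence $V_0=V_1$ a.e. This avoids \eqref{eq_poincare} entirely, needs no case-split, and uses only the weaker parts of Theorem \ref{teo_alternative}; in fact the paper's own step ``$\eta$ is neither proportional to $\eta_0$ nor to $\eta_1$'' is justified by exactly the computation you use as your punchline. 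You also supply the converse half of the extremal-point characterization (subcritical $\Rightarrow$ not extremal), which the paper leaves implicit; the only cosmetic point there is that a Hardy weight $w$ need not lie in $L^\infty_\loc(M)$, so to land $V\pm w$ in $\mathscr{K}$ one should first replace $w$ by the truncation $\min\{w,1\}$, which is still a Hardy weight.
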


\begin{proof}
Since
\[
Q_{V_t}(\phi) = (1-t)Q_{V_0}(\phi) + t Q_{V_1}(\phi),
\]	
the assertion $L_{V_t} \ge 0$ is immediate. Moreover, if any of $L_{V_0},L_{V_1}$ is subcritical, say $L_{V_1}$, and $W_1$ is a Hardy weight, $L_{V_t}$ is subcritical for $t \in (0,1)$ with Hardy weight $tW$. We are left to consider the case $V_0 \not \equiv V_1$ and $L_{V_0},L_{V_1}$ critical. For $j \in \{0,1\}$ pick ground states $\eta_j$ for $L_{V_j}$. Assume by contradiction that $L_{V_t}$ is critical for some $t \in (0,1)$, consider a null sequence $\{\phi_k\}$ and its $L^2_\loc$-limit $\eta$, a ground state. Since $V_0 \not \equiv V_1$, $\eta$ is neither proportional to $\eta_0$ nor to $\eta_1$, thus there exist $\psi_i \in C^\infty_c(M)$, $i \in \{0,1\}$ satisfying
\begin{equation}\label{eq_lepsi}
\int_M\psi_i \eta_i \neq 0, \qquad \int_M\psi_i \eta = 0 \qquad \text{if } \, i \neq j.
\end{equation}
By Theorem \ref{teo_alternative}, there exist $0 < W \in C(M)$ and a constant $C = C(\psi_0,\psi_1,V_0,V_1,M,W)$ such that
\[
C^{-1} \int_M W \phi^2 \le Q_{V_i}(\phi) + C \left| \int_M \psi_i \phi\right|^2 \qquad \forall \, \phi \in C^\infty_c(M).
\]
By taking convex combination, we deduce
\[
	C^{-1} \int_M W \phi^2 \le Q_{V_t}(\phi) + C(1-t)\left| \int_M \psi_1 \phi\right|^2 + C t \left| \int_M \psi_2 \phi\right|^2 \qquad \forall \, \phi \in C^\infty_c(M).
\]
Hence, using $\phi = \phi_k$, from $Q_{V_t}(\phi_k) \to 0$ and \eqref{eq_lepsi} we get
\begin{equation}\label{eq_groundstate_q}
	C^{-1} \int_M W \eta^2 \le C(1-t)\left| \int_M \psi_1 \eta\right|^2 + C t \left| \int_M \psi_2 \eta\right|^2 = 0,
\end{equation}
contradiction.
\end{proof}


The notion of criticality can be localized on each end of $M$ as follows: assume that $L \doteq -\Delta - V$ satisfies $L \ge 0$ on $M$. Hereafter, a pair $(K,\xi)$ is the data of a solution $0< \xi \in C^{1,\alpha}_\loc(M)$ to $L \xi = 0$ (equality here is important) and a compact set $K$ with non-empty interior and smooth boundary. Let $E$ be an end of $M$ with respect to $K$, that is, a connected component of $M \backslash K$ with non-compact closure.
%
Fix an exhaustion $\{\Omega_j\}$ of $M$ with $K \subset \Omega_1$, set $E_j = E \cap \Omega_j$ and consider the family of solutions $u_j$ to
\begin{equation}\label{def_uj_end}
\left\{\begin{array}{l}
	Lu_j = 0 \qquad \text{on } \, E_j, \\[0.2cm]
	u_j = \xi \ \ \text{ on } \, \partial E, \qquad u_j = 0 \ \ \text{ on } \, E \cap \partial \Omega_j.
\end{array}
\right.
\end{equation}
The maximum and comparison principles imply that $0 < u_j \le u_{j+1} \le \xi$, thus by elliptic estimates $u_j \uparrow u_\xi$ in $C^1_\loc(\overline{E})$ for some $u_\xi>0$ solving
\[
\left\{\begin{array}{l}
	Lu_\xi = 0 \qquad \text{on } \, E, \\[0.2cm]
	u_\xi = \xi \ \ \text{ on } \, \partial E, \qquad 0 < u_\xi \le \xi \ \ \text{ on } \, E.
\end{array}
\right.
\]
By comparison, $u_\xi$ does not depend on the chosen exhaustion. We call $u_\xi$ the minimal solution on $E$ with respect to $\xi$. The strong maximum principle implies that either $u_\xi \equiv \xi$ or $u_\xi < \xi$ on $E$.
\begin{definition}
	We say that $L$ is $\xi$-critical on $E$ if $u_\xi \equiv \xi$ on $E$, and $\xi$-subcritical otherwise.
\end{definition}
%
%
We have the following
\begin{theorem}\label{teo_subcrit_ends}
	Let $(M,g)$ be a Riemannian manifold, $V \in L^\infty_\loc(M)$ and assume that $L \doteq -\Delta - V \ge 0$. Then, the following are equivalent:
	\begin{itemize}
		\item[(i)] $L$ is critical on $M$;
		\item[(ii)] for every pair $(K,\xi)$, $L$ is $\xi$-critical on each end $E$ with respect to $K$;
		\item[(iii)] for some pair $(K,\xi)$, $L$ is $\xi$-critical on each end $E$ with respect to $K$;
	\end{itemize}
\end{theorem}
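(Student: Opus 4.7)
The plan is to prove the cycle (i) $\Rightarrow$ (ii) $\Rightarrow$ (iii) $\Rightarrow$ (i); the middle implication is immediate.

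For (i) $\Rightarrow$ (ii), I would argue by contraposition. Assume some pair $(K,\xi)$ admits an end $E$ on which $L$ is $\xi$-subcritical, so that $u_\xi < \xi$ on the interior of $E$ while $u_\xi = \xi$ on $\partial E$. Glue
\[
\tilde\xi \doteq \begin{cases} u_\xi & \text{on } E, \\ \xi & \text{on } M\setminus E, \end{cases}
\]
a continuous positive function on $M$. Applying Green's formula separately on $E$ and $M\setminus E$ to a non-negative test function $\phi$, and using $L\xi = 0$ on $M\setminus E$ and $Lu_\xi = 0$ on $E$, one obtains
\[
\langle L\tilde\xi, \phi \rangle \;=\; \int_{\partial E} \phi \, \partial_\nu(\xi - u_\xi)\, d\sigma,
\]
where $\nu$ is the outer unit normal to $E$ and the derivative is the inner trace from $E$. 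The Hopf boundary point lemma, applied to $\xi - u_\xi \ge 0$ on $E$ (vanishing on $\partial E$), gives $\partial_\nu(\xi - u_\xi) \le 0$, so $\tilde\xi$ is a distributional supersolution $L\tilde\xi\le 0$. Since $\tilde\xi$ and $\xi$ are two positive supersolutions coinciding on $M\setminus E$ but differing on $E$, they are not proportional; by Theorem~\ref{teo_alternative}\,(iii), $L$ is subcritical on $M$, contradicting (i).

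For (iii) $\Rightarrow$ (i), I would construct a null sequence for $L$ and invoke Theorem~\ref{teo_alternative}. Fix a pair $(K,\xi)$ witnessing (iii); after enlarging $K$ by the (finitely many) bounded components of $M\setminus K$ we may assume $M\setminus K = E^{(1)}\cup\cdots\cup E^{(\ell)}$ is a finite disjoint union of ends, and this preserves (iii) since $\xi$ still solves $L\xi=0$ on $K$. Fix an exhaustion $\{\Omega_j\}$ with $K \Subset \Omega_1$, and let $u^{(i)}_j$ solve \eqref{def_uj_end} on $E^{(i)}_j \doteq E^{(i)} \cap \Omega_j$, so that $u^{(i)}_j \uparrow u^{(i)}_\xi = \xi$ on $E^{(i)}$ by the $\xi$-criticality assumption. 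Define
\[
\phi_j \doteq \begin{cases} \xi & \text{on } K, \\ u^{(i)}_j & \text{on } E^{(i)}_j, \\ 0 & \text{on } M \setminus \Omega_j, \end{cases}
\]
a non-negative Lipschitz compactly supported function on $M$. Applying Green's formula on $K$ and on each $E^{(i)}_j$, and using $L\xi = 0$, $Lu^{(i)}_j = 0$, the quadratic form collapses to
\[
Q_V(\phi_j) \;=\; \sum_{i=1}^{\ell} \int_{\partial E^{(i)}} \xi\,\partial_{\nu_K}(\xi - u^{(i)}_j)\,d\sigma,
\]
with $\nu_K$ the outer unit normal to $K$. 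Moreover, for any fixed ball $B\Subset K$, $\|\phi_j\|_{L^2(B)} = \|\xi\|_{L^2(B)}$ is a positive finite constant independent of $j$.

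The remaining step, which I expect to be the main technical obstacle, is to verify $Q_V(\phi_j) \to 0$. The locally uniform convergence $u^{(i)}_j \uparrow \xi$ on $E^{(i)}$ needs to be upgraded to $C^1$ convergence on a neighborhood of $\partial E^{(i)}$ via boundary Schauder estimates: since $u^{(i)}_j$ and $\xi$ share the trace $\xi|_{\partial E^{(i)}}$ and both solve $L(\cdot) = 0$ up to $\partial E^{(i)}$, interior-boundary regularity transfers the convergence to the gradient. Hence $\partial_{\nu_K}(\xi - u^{(i)}_j) \to 0$ uniformly on $\partial E^{(i)}$, and $Q_V(\phi_j) \to 0$. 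Therefore $\{\phi_j\}$ is a null sequence, and Theorem~\ref{teo_alternative} implies that $L$ is critical on $M$, as desired.
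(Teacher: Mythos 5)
Your proposal is correct and follows essentially the same route as the paper: both directions rest on gluing the minimal solutions $u^{(i)}_j$ to $\xi$ across $\partial E^{(i)}$, computing $Q_V$ of the glued function as the boundary integral $\sum_i\int_{\partial E^{(i)}}\xi\,\partial_\nu(\xi-u^{(i)}_j)$, and invoking Theorem \ref{teo_alternative} (non-proportional supersolutions for one direction, a null sequence for the other). The technical point you flag — upgrading $u^{(i)}_j\uparrow\xi$ to $C^1$ convergence up to $\partial E^{(i)}$ — is exactly what the paper secures when it asserts $u_j\uparrow u_\xi$ in $C^1_\loc(\overline{E})$ by elliptic estimates, so it is not an obstacle.
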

\begin{proof}
	We introduce the following construction. Fix a pair $(K,\xi)$, and let $E^{(1)},\ldots, E^{(k)}$ be the ends with respect to $K$. Up to attaching to $K$ the connected components of $M \backslash K$ with compact closure, we can assume that $M \backslash K = E^{(1)} \cup \ldots \cup E^{(k)}$. For each $i$, let $u^{(i)}$ be the minimal solution on $E^{(i)}$ with respect to $\xi$. Let $\{\Omega_j\}$ be a smooth exhaustion of $M$, and for each $i$ let $\{u^{(i)}_j\}$ be the sequence constructed in \eqref{def_uj_end} on $E^{(i)}$. Define
	\[
	\xi_j \doteq \left\{ \begin{array}{ll}
		\xi & \quad \text{on } \, K, \\
		u^{(i)}_j & \quad \text{on } \, E^{(i)} \cap \Omega_j.
	\end{array}\right.
	\]
	Then, $0 < \xi_j \le \xi$ on $\Omega_j$. Moreover, integrating by parts $0 = \xi L\xi$ on $K$ and $0 = u^{(i)}_j L u^{(i)}_j$ on $E^{(i)} \cap \Omega_j$ and subtracting the resulting identities we deduce
	\[
	Q_V(\xi_j) = \sum_{i=1}^k \int_{\partial E^{(i)}} \xi\left[ \partial_\eta \xi - \partial_\eta u^{(i)}_j\right].
	\]
	Moreover,  	
	\[
	\xi_j \uparrow \hat\xi \doteq \left\{ \begin{array}{ll}
		\xi & \quad \text{on } \, K, \\
		u^{(i)} & \quad \text{on } \, E^{(i)},
	\end{array}\right.
	\]
	and from $u^{(i)} \le \xi$ one easily deduces that $L\hat\xi \ge 0$ in the weak sense on $M$. We are ready to prove our equivalence.\\
	$(i) \Rightarrow (ii)$. By contradiction, assume that for some pair $(K,\xi)$ the operator $L$ is subcritical on some end. Then, by construction $\hat \xi \not \equiv \xi$. Since $\hat\xi = \xi$ on $K$, $\xi$ and $\hat\xi$ are distinct supersolutions for $L$ which are not proportional. By $(iii)$ in Theorem \ref{teo_alternative} we deduce that $L$ is subcritical on $M$, contradiction.\\
	$(ii)\Rightarrow (iii)$ is obvious.\\
	$(iii)\Rightarrow (i)$. Fix the pair $(K,\xi)$ in $(iii)$, and assume by contradiction that $L$ is subcritical on $M$. Then, the energy $Q_V(\xi_j)$ cannot vanish as $j \to \infty$ (otherwise, $\{\xi_j\}$ would be a null sequence). Since
	\[
	Q_V(\xi_j) \to \sum_{i=1}^k \int_{\partial E^{(i)}} \xi\left[ \partial_\eta \xi - \partial_\eta u^{(i)}\right],
	\]
we deduce that $u^{(i)} \neq \xi$ for some $i$, so $L$ is $\xi$-subcritical on $E^{(i)}$, contradiction.	
\end{proof}

%
\

\section{The conformal method, criticality and splitting criteria}\label{sec_criteria}

Let us consider a conformal deformation
\[
\bar g = e^{2\varphi}g, \qquad \varphi \in C^\infty(M),
\]
and denote with a bar superscript quantities in the metric $\bar g$. The Ricci curvatures of $g$ and $\bar g$ relate as follows:
\begin{equation}\label{eq_rel_ricci}
\overline{\Ric} = \Ric -(n-2)(\nabla^2 \varphi - \di \varphi \otimes \di \varphi) - \left[ \Delta \varphi + (n-2)|\di \varphi|^2 \right] g.
\end{equation}
Let $\gamma$ be a $\bar g$-geodesic, namely, a geodesic in the metric $\bar g$. Denoting by $s$ and $\bar s$, respectively, the $g$-arclength and the $\bar g$-arclength of $\gamma$, we have $\di \bar s = e^\varphi \di s$ and $\partial_{\bar s} = e^{-\varphi}\partial_s$, where $\varphi$ is evaluated at $\gamma(s)$. Since the $g$ and $\bar g$-unit tangent vectors to $\gamma$ satisfy $\gamma_{\bar s} = e^{-\varphi}\gamma_s$, we have
\[
0 = \bar \nabla_{\gamma_{\bar s}} \gamma_{\bar s} = e^{-2\varphi} \left( \nabla_{\gamma_s} \gamma_s + \gamma_s(\varphi)\gamma_s - \nabla \varphi\right)
\]
Therefore,
\begin{equation}\label{eq_link_phiss}
\begin{array}{lcl}
	(\varphi \circ \gamma)_{ss} & = & \nabla^2 \varphi(\gamma_s,\gamma_s) + \di \varphi(\nabla_{\gamma_s}\gamma_s) \\[0.3cm]
	& = & \nabla^2 \varphi(\gamma_s,\gamma_s) + \di \varphi \Big(\nabla \varphi - \gamma_s(\varphi)\gamma_s\Big) = \nabla^2 \varphi(\gamma_s,\gamma_s) + |\di \varphi|^2 - \di \varphi(\gamma_s)^2
\end{array}
\end{equation}
Therefore, when restricted to $\gamma$, \eqref{eq_rel_ricci} implies the remarkable identity
\begin{equation}\label{eq_remarkable}
\overline{\Ric}(\gamma_s,\gamma_s) = \Ric(\gamma_s,\gamma_s) - (n-2)(\varphi \circ \gamma)_{ss} - \Delta \varphi,
\end{equation}
appearing implicitly in \cite{shen_ye_min} and proved in the Appendix of \cite{enr}. Hereafter, the composition with $\gamma$ will tacitly be assumed, thus  we will write $\varphi_s,\varphi_{ss}$ rather than $(\varphi \circ \gamma)_s, (\varphi \circ \gamma)_{ss}$ and so forth.

We examine more closely the metric properties of the metric $\bar g = e^{2\varphi}g$. If $f,\psi : M \to \R$ and smooth functions, note that
\[
\begin{array}{lcl}
	\bar \nabla^2 f & = & \disp \nabla^2 f - (\di f \otimes \di \varphi + \di \varphi \otimes \di f) + g(\di f, \di \varphi)g, \\[0.4cm]
	\bar \Delta \psi & = & e^{-2\varphi}\Big( \Delta \psi + (n-2)g(\di \varphi, \di \psi)\Big)
\end{array}\]
Hence, from \eqref{eq_rel_ricci}, choosing
\[
f = (n-2)\varphi
\]
the $\infty$-Bakry-Emery Ricci curvature
\[
\overline{\Ric}_f \doteq \overline{\Ric} + \bar \nabla^2 f
\]
satisfies:
\begin{equation}\label{eq_ric_weight}
\overline{\Ric}_f = \Ric - (n-2) \di \varphi \otimes \di \varphi - (\Delta \varphi)g.
\end{equation}
Moreover, the associated weighted Laplacian
\[
\bar{\Delta}_f \psi \doteq \bar{\Delta} \psi - \bar g(\di f, \di \psi)
\]
of a function $\psi$ satisfies
\begin{equation}\label{eq_Lapl_weighted}
\bar{\Delta}_f \psi = e^{-2\varphi}\Delta \psi.
\end{equation}

Observe that, in dimension $n=2$, formulas \eqref{eq_ric_weight} and \eqref{eq_Lapl_weighted} become the well-known identities relating the Gaussian curvature and the Laplacian of conformally deformed metrics. The main computation, repeatedly used throughout the paper and essentially due to \cite{shen_ye_min}, is the following. Hereafter, we write
	\[
	C^k_0([a,b]) = \big\{ \eta \in C^k([a,b]) \ : \eta(a)=\eta(b) = 0\big\}.
	\]

\begin{proposition}\label{prop_basiccomp}
Assume that $(M^n,g)$ is a Riemannian manifold, let $V \in C(M)$ and define $L \doteq -\Delta - V$. Fix $0 < u \in C^2(M)$, and for $\beta > 0$ consider the metric
\[
\bar g = u^{2\beta} g.
\]
Assume that $\gamma : [a,b] \to M$ is a curve parametrized by $g$-arclength $s$, and assume that $\gamma$ is $\bar g$-length minimizing. Then, for each $\eta \in C^1_0([a,b])$ it holds
	\begin{equation}\label{eq_fundamental}
\begin{array}{l}
	\disp \beta \int_{a}^{b} \frac{Lu}{u}\frac{\eta^2}{u^{\beta}}\di s + \int_{a}^{b}  \Big(\Ric(\gamma_{s},\gamma_{s}) + \beta V\Big)\frac{\eta^2}{u^{\beta}}\di s + \beta \int_a^b |\nabla^\perp \log u|^2 \frac{\eta^2}{u^{\beta}} \di s \\[0.5cm]
	\qquad \le \disp (n-1)\int_{a}^{b} \frac{(\eta_{s})^2}{\eta^2} \frac{\eta^2}{u^{\beta}}\di s - \beta \int_{a}^{b}\frac{(u_s)^2}{u^2} \frac{\eta^2}{u^{\beta}}\di s - \beta(n-2)\int_{a}^{b} \frac{u_s}{u} \left(\frac{\eta^2}{u^{\beta}}\right)_s \di s,
\end{array}
\end{equation}
where $\nabla^\perp$ is the component of the $g$-gradient $\nabla$ orthogonal to $\gamma_s$.
\end{proposition}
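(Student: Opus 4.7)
The plan is to apply the standard second variation of arclength in the conformal metric $\bar g = u^{2\beta} g = e^{2\varphi} g$ with $\varphi = \beta\log u$, and then convert the resulting inequality back to the $g$-arclength parameter $s$. Since $\gamma$ is $\bar g$-length minimizing, the index form in $\bar g$ is non-negative on every normal variation $\eta E$ with $E$ a $\bar g$-parallel, $\bar g$-unit, $\bar g$-orthogonal-to-$\gamma_{\bar s}$ vector field along $\gamma$. Summing over $n-1$ such fields obtained by parallel-transporting a $\bar g$-orthonormal frame of $\gamma_{\bar s}^{\perp}$ gives the Myers-type estimate
\[
\int_{\bar a}^{\bar b} \overline{\Ric}(\gamma_{\bar s},\gamma_{\bar s})\eta^2\, d\bar s \le (n-1)\int_{\bar a}^{\bar b} \eta_{\bar s}^2\, d\bar s.
\]
Translating to the $g$-arclength via $d\bar s = u^\beta\, ds$, $\gamma_{\bar s} = u^{-\beta}\gamma_s$ and the $(0,2)$-bilinearity of $\overline{\Ric}$ turns this into
\[
\int_a^b u^{-\beta} \overline{\Ric}(\gamma_s,\gamma_s)\eta^2\, ds \le (n-1) \int_a^b u^{-\beta}\eta_s^2\, ds.
\]

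Next I would substitute identity \eqref{eq_remarkable}, namely $\overline{\Ric}(\gamma_s,\gamma_s) = \Ric(\gamma_s,\gamma_s) - (n-2)\varphi_{ss} - \Delta\varphi$, and process the two $\varphi$-terms separately. For $\Delta\varphi$, the algebraic identity $\Delta\log u = \Delta u/u - |\nabla u|^2/u^2$ together with $\Delta u = Lu - Vu$ yields
\[
-\Delta\varphi = -\beta \frac{Lu}{u} + \beta V + \beta\frac{|\nabla u|^2}{u^2}.
\]
Splitting $|\nabla u|^2 = u_s^2 + |\nabla^\perp u|^2$ orthogonally to $\gamma_s$ produces the two pieces $\beta u_s^2/u^2$ and $\beta|\nabla^\perp \log u|^2$, while $-\beta Lu/u \ge 0$ thanks to $Lu \le 0$. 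For $\varphi_{ss}$, I would integrate by parts: since $\eta(a)=\eta(b)=0$ and $\varphi_s = \beta u_s/u$,
\[
-(n-2)\int_a^b u^{-\beta}\varphi_{ss}\eta^2\, ds = \beta(n-2)\int_a^b \frac{u_s}{u}\left(\frac{\eta^2}{u^\beta}\right)_{\!\! s} ds.
\]

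Collecting all contributions and rearranging so that the tangential squared-gradient term and the mixed $(u_s/u)(\eta^2/u^\beta)_s$ term sit on the right-hand side delivers precisely \eqref{eq_fundamental}. The main obstacle is the bookkeeping of conformal weights: one has to verify carefully how $d\bar s$, $\eta_{\bar s}$, and $\overline{\Ric}(\gamma_{\bar s},\gamma_{\bar s})$ rescale under the passage from $\bar s$ to $s$, and to ensure that the decomposition of $|\nabla u|^2$ together with the integration by parts of $\varphi_{ss}$ produces exactly the terms with coefficients $\beta$ and $\beta(n-2)$ in the claimed form. Beyond this, no further geometric ingredient is needed: the proof is an assembly of the $\bar g$-Myers inequality, the conformal identity \eqref{eq_remarkable}, and the supersolution hypothesis.
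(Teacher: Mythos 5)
Your proposal is correct and follows essentially the same route as the paper: the $\bar g$-Myers/second-variation inequality for the minimizing curve, the change of variable $\di\bar s = u^{\beta}\di s$, substitution of the conformal identity \eqref{eq_remarkable} with $\varphi=\beta\log u$, the orthogonal splitting of $|\nabla\log u|^2$, and integration by parts of the $\varphi_{ss}$ term. The sign and weight bookkeeping you describe works out exactly as in the paper's computation \eqref{eq_secondvar_nice_2}.
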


\begin{proof}
	Setting $\varphi = \beta \log u$, we deduce from \eqref{eq_remarkable} the identity
	\[
	\overline{\Ric}(\gamma_s,\gamma_s) = \Ric(\gamma_s,\gamma_s) - (n-2)\beta (\log u)_{ss} + \beta \frac{Lu}{u} + \beta V + \beta |\di \log u|^2.
	\]
	Let $\bar s$ denotes the $\bar g$-arclength. Since $\gamma$ is $\bar g$-minimizing, the second variation formula in the metric $\bar g$ yields
	\begin{equation}\label{eq_secondvar_nice_2}
		\begin{array}{lcl}
			0 & \le & \disp \int_{\bar s(a)}^{\bar s(b)} \Big[(n-1)(\eta_{\bar s})^2 - \overline{\Ric}(\gamma_{\bar s},\gamma_{\bar s})\eta^2 \Big] \di \bar s \\[0.5cm]
			& = & \disp \int_{a}^{b} \Big[(n-1)(\eta_{s})^2 - \overline{\Ric}(\gamma_{s},\gamma_{s})\eta^2 \Big] u^{-\beta}\di s \\[0.5cm]
			& = & \disp (n-1)\int_{a}^{b} (\eta_{s})^2u^{-\beta}\di s - \int_{a}^{b}  \Big(\Ric(\gamma_{s},\gamma_{s}) + \beta V\Big)\eta^2 u^{-\beta}\di s \\[0.5cm]
			& & \disp - \beta \int_{a}^{b} \Big(|\di\log u|^2 - (n-2)(\log u)_{ss}\Big) \eta^2 u^{-\beta}\di s - \beta \int_{a}^{b} \frac{Lu}{u}\eta^2u^{-\beta} \di s.
		\end{array}
	\end{equation}
	Using $|\di \log u|^2 = |\nabla^\perp \log u |^2 + (u_s)^2/u^2$ and integrating by parts the term with $(\log u)_{ss}$ we get \eqref{eq_fundamental}.
\end{proof}

The following proposition enables to construct a ``shortest line" $\gamma$ in a Riemannian manifold $(M,h)$ with more than one end, so that the completeness of $h$ is detected by the fact that $\gamma$ is defined on the whole of $\R$ once parametrized by $h$-arclength. The result improves on \cite[Lemma 1.1]{xucheng}.

\begin{proposition}\label{prop_shortest_line}
	Let $(M,h)$ be a Riemannian manifold with more than one end. Then, there exist $T,T^* \in (0,\infty]$ and a constant speed curve $\gamma : (-T,T^*) \to M$ which satisfies:
	\begin{itemize}
		\item[(i)] $\gamma$ is length minimizing for every pair of its points,
		\item[(ii)] $\gamma$ is a divergent curve, i.e. $\gamma^{-1}(K)$ is compact for each $K \subset M$ compact. Moreover, having fixed any compact set $K$, for $\eps$ small enough $\gamma_{[T^*-\eps,T^*)}$ and $\gamma_{(-T,-T+\eps]}$ belong to different ends of $M$ with respect to $K$. 		
		\item[(iii)] $(M,h)$ is complete if and only if both the curves $\gamma_{[0,T^*)}$ and $\gamma_{(-T,0]}$ have infinite length, that is, $T = T^* = \infty$.
	\end{itemize}
\end{proposition}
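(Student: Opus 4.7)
The plan is to construct $\gamma$ as a sub-limit of almost-minimizing curves joining diverging sequences in two distinct ends. Since $M$ has more than one end, I would first fix a compact $K \subset M$ whose complement contains two distinct unbounded connected components $E_1, E_2$. Using an exhaustion $\{\Omega_n\}$ of $M$, I would pick divergent sequences $p_n \in E_1 \setminus \Omega_n$ and $q_n \in E_2 \setminus \Omega_n$. For each $n$ I would choose a continuous curve $\sigma_n$ from $p_n$ to $q_n$ of $h$-length at most $d_h(p_n, q_n) + 1/n$, parametrized by $h$-arclength, and reparametrize so that $\sigma_n(0)$ coincides with the first crossing of $\sigma_n$ with $K$. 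Writing $\sigma_n : [-a_n, b_n] \to M$ with $\sigma_n(-a_n) = p_n$ and $\sigma_n(b_n) = q_n$, this choice guarantees $\sigma_n(s) \in E_1$ for every $s \in [-a_n, 0)$.

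The next step is to extract a limit curve. By compactness of $K$, passing to a subsequence one has $\sigma_n(0) \to o^* \in K$, and by local compactness of $M$ a small closed ball $\overline{B_h(o^*, r)}$ is compact. Since the $\sigma_n$ are $1$-Lipschitz, a local Arzel\`a--Ascoli argument will give uniform convergence on a small interval around $0$. Iterating by a diagonal procedure, one obtains a $1$-Lipschitz curve $\gamma : (-T, T^*) \to M$ with maximal domain, in the sense that $\gamma$ admits no continuous extension to an $M$-valued curve on a larger open interval; up to further subsequences, $a_n \to T$ and $b_n \to T^*$ in $(0, \infty]$. To establish (i), since $\sigma_n$ is $1/n$-almost minimizing a standard triangle-inequality argument should yield
\[
d_h(\sigma_n(s_1), \sigma_n(s_2)) \ge |s_2 - s_1| - 1/n \qquad \forall\, s_1, s_2 \in [-a_n, b_n];
\]
combined with the trivial opposite inequality and passing to the limit, $\gamma$ will be length-minimizing between any pair of its points.

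For (ii), the maximality of the domain together with local compactness will rule out any subsequential limit of $\gamma(s)$ in $M$ as $s \to T^*$ or $s \to -T$: such a limit $p \in M$ would permit a further local extraction in a compact ball around $p$, extending $\gamma$ past the boundary and contradicting maximality, so $\gamma$ is divergent. Since $\sigma_n(s) \in E_1$ for $s \in [-a_n, 0)$, the limit satisfies $\gamma(s) \in \overline{E_1}$ for $s \in (-T, 0)$; divergence forces $\gamma(s) \notin K$ for $s$ close to $-T$, giving $\gamma(s) \in E_1$ there, and the symmetric argument at $T^*$ gives the end condition. Finally, (iii) follows since a divergent curve in a complete manifold necessarily has infinite length, forcing $T = T^* = \infty$, whereas $T < \infty$ or $T^* < \infty$ produces a finite-length divergent curve that prevents completeness.

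The hardest part is the incompleteness of $M$: the classical Cheeger--Gromoll construction of a line relies on global compactness of closed balls, which is unavailable here. The remedy is to perform the extraction purely locally and then pass to a maximally extended $\gamma$, using divergence as the substitute for ``geodesic escape to infinity''. A second delicate point will be to ensure the limit inherits the two-end structure, for which anchoring $\sigma_n(0)$ at the first crossing of $K$ is essential: it guarantees that the left half of $\gamma$ lies in $\overline{E_1}$, and symmetrically for the right half.
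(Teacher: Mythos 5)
There are two genuine gaps, both in the parts of the statement that carry the real content.

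\textbf{Part (ii), ``different ends''.} Your anchoring at the \emph{first} crossing of $K$ controls only the left half: it gives $\sigma_n(s)\in E_1$ for $s<0$, hence $\gamma_{(-T,0)}\subset\overline{E_1}$. There is no ``symmetric argument at $T^*$'': after time $0$ the curves $\sigma_n$ may re-enter $E_1$, and the limit $\gamma$ may diverge at an incompleteness point of $E_1$ at a time $T^*$ strictly before the $\sigma_n$ ever reach $E_2$ (the tail of $\sigma_n$ that actually lands in $E_2$ is simply lost in the limit). Concretely, take $M=\R^2\setminus\{(0,0),(0,-1)\}$, $K$ the union of a small annulus centred at $(0,-1)$ and a small disk centred at $(0,1)$, so that $E_1$ is the unbounded component (which contains the puncture at the origin) and $E_2$ the punctured disk at $(0,-1)$. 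With $p_n=(0,n)$ and $q_n\to(0,-1)$, your $\sigma_n$ are near-vertical segments swerving around the origin, the first crossing of $K$ occurs at $(0,1.1)$, and $\gamma$ is the downward ray dying at the origin: both halves lie in $E_1$ with respect to your $K$. (Here the conclusion happens to survive because the two halves approach different ends of $M$ with respect to a \emph{different} compact set, but your argument does not see this, and you would need a genuinely new argument to rule out, in general, that both halves converge to the same end.) The paper avoids this entirely by minimizing the distance between $\partial E_i(n)$ and $\partial E_j(n)$ over all pairs $i\neq j$ inside the compact manifolds with boundary $\overline{\Omega}_n$: the resulting geodesics have interior in $\Omega_n$ and the limit line provably crosses every $\partial E_i(n)$ and $\partial E_j(n)$, which pins the two tails in two prescribed distinct ends. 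Relatedly, your divergence argument (``maximality of the domain rules out subsequential limits'') only covers the case $T^*<\liminf b_n$; when $T^*=\lim b_n$ one cannot ``extend past the boundary'' because $\sigma_n$ is not defined there, and one must instead use that $q_n$ diverges. Also note that ``unbounded component'' should be ``component with non-compact closure'': these differ for incomplete metrics, which is exactly the setting in which the paper applies the proposition.

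\textbf{Part (iii), the ``if'' direction.} You prove ``complete $\Rightarrow T=T^*=\infty$'' twice (once directly, once in contrapositive form: ``$T<\infty$ or $T^*<\infty$ prevents completeness''), but never prove the converse ``$T=T^*=\infty\Rightarrow$ complete'', which is the nontrivial direction. It is false that the existence of a single infinite minimizing divergent line forces completeness: $\R^2\setminus\{(0,1)\}$ contains the line $\{y=0\}$. The implication must therefore be extracted from the specific construction of $\gamma$; the paper does this with a dedicated comparison argument, showing that any maximal unit-speed geodesic $\sigma$ from $o$ dying in finite time would force $\di(\gamma_n(0),\partial E_j(n))$, hence $T_n^*$, to stay bounded, contradicting $T^*=\infty$. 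This step has no counterpart in your proposal and cannot be omitted.
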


\begin{proof}
To construct $\gamma$, fix a compact set $K$ with smooth boundary such that $M\backslash K$ has at least two ends. Up to including in $K$ the components of $M \backslash K$ with compact closure, we can assume that $M \backslash K= \coprod_{i=1}^\ell E_i$, with $E_i$ an end. Pick an exhaustion $\{\Omega_n\}$ of $M$ with $K \Subset \Omega_1$. For each $i,n$, define $E_i(n) = E_i \backslash \Omega_n$. Define
	\[
	\tau_n = \min \Big\{ \di\big(\partial E_i(n),\partial E_j(n)\big) : i,j \in \{1,\ldots, \ell\}, \ i \neq j. \Big\}.
	\]
Since $(\overline{\Omega}_n,h)$ is a compact manifold with boundary, there exists a segment (i.e. a unit speed, minimizing geodesic) $\gamma_n$ in $\overline{\Omega}_n$ realizing $\tau_n$, joining $\partial E_{i_n}(n)$ to $\partial E_{j_n}(n)$ for some indices $i_n \neq j_n$. By construction $\gamma_n$ intersects $K$, thus up to translating the arclength parameter we can assume that $\gamma : [-T_{n},T_{n}^*] \to \overline{\Omega}_n$ for some positive $T_{n},T_{n}^*$, and that
\[
\gamma_n(0) \in K, \qquad \gamma_n(-T_{n}) \in \partial E_{i_n}(n), \qquad \gamma_n(T_{n}^*) \in \partial E_{j_n}(n).
\]
By minimality $\gamma_n((-T_{n},T_{n}^*)) \subset \Omega_n$, hence $\gamma_n$ is a geodesic. Furthermore, by construction there exist indices $i,j$ such that $\gamma_n$ joins $\partial E_i(n)$ and $\partial E_j(n)$ for infinitely many $n$. Up to passing to a subsequence in $n$ (still labelled the same), the family $\{\gamma_n\}$ locally converges to a minimizing geodesic $\gamma : (-T,T^*) \to M$ which crosses every $\partial E_i(n)$ and $\partial E_j(n)$, say at instants $t_n$ and $t_n^*$, respectively, and satisfies $\gamma(0)= o \in K$. In particular, $\gamma$ satisfies $(i)$. Note that $t_n \to -T$ and $t_n^* \to T^*$, because $\{\partial E_i(n)\}$ and $\{\partial E_j(n)\}$ are divergent sequences of sets as $n \to \infty$. To prove $(ii)$, it is enough to show that $\gamma_{[0,T^*)}$ is divergent and eventually valued in $E_j$, the case of $\gamma_{(-T,0]}$ being analogous. By contradiction, assume that there exists a compact set $K$ (which we can assume contains $o$ in its interior) and a sequence $\tau_n \to T^*$ so that $\gamma(\tau_n) \in K$. Pick $n_0$ such that $K \subset \Omega_{n_0}$. Then, for each $n \ge n_0$ we can estimate
\[
|t^*_n-\tau_n| = \di \big( \gamma(t^*_n), \gamma(\tau_n) \big) \ge \di(\partial E_j(n), K) \ge \di(\partial E_j(n_0), K).
\]
From $t^*_n,\tau_n \to T^*$ we infer that necessarily $T^* = \infty$. However, from
\[
\tau_n = \di(o, \gamma(\tau_n)) \le \mathrm{diam}(K)
\]
and letting $n \to \infty$ we get a contradiction. Thus, $\gamma_{[0,T*)}$ is divergent, and since $\gamma(t_n) \in \partial E_j(n)$ we conclude that $\gamma$ is eventually contained in $E_j$. This proves $(ii)$.\\
To show $(iii)$, assume that $(M,h)$ is complete. Then, there exist $R_n \to \infty$ such that $B_{R_n}(o) \subset \Omega_n$. Since $\gamma$ is minimizing, by the definition of $t_n, t_n^*$ it holds $t_n \le -R_n$, $t_n^* \ge R_n$, hence $T = T^* = \infty$.\\
Viceversa, assume that $T = T^* = \infty$. In particular, $T_n^* \to \infty$. Let $\sigma : [0,R) \to M$ be a maximally extended unit speed geodesic issuing from $o$. We claim that $R = \infty$, from which the completeness follows. Assume by contradiction that $R < \infty$. Then, by ODE theory $\sigma$ is a divergent curve, and we can fix $k \in \{1,\ldots, \ell\}$ so that $\sigma([R_0,R)) \subset E_k$ for some $R_0>0$. Choose an index between $i,j$ which is different from $k$, say $i$. For each $n$, let $s_n$ be the first instant such that $\sigma(s_n) \in \partial E_k(n)$. We claim that
\begin{equation}\label{ine_dist}
\di( \gamma_n(0), \partial E_j(n)) \le \di( \gamma_n(0), \partial E_k(n)).
\end{equation}
Otherwise, concatenating $\gamma_n : (-T_n,0] \to M$ to a segment $\eta$ from $\gamma_n(0)$ to $\partial E_k(n)$ we would get a curve joining $\partial E_i(n)$ to $\partial E_k(n)$ shorter than $\gamma_n$, contradiction. From \eqref{ine_dist} we deduce
\[
\begin{array}{lcl}
T_n^* & = & \disp \di( \gamma_n(0), \partial E_j(n)) \le \disp \di( \gamma_n(0), \partial E_k(n)) \\[0.2cm]
& \le & \disp \disp \di( \gamma_n(0), \gamma(0)) + \di( \gamma(0), \partial E_k(n)) \\[0.2cm]
& \le & \disp \di( \gamma_n(0), \gamma(0)) + s_n < \di( \gamma_n(0), \gamma(0)) + R.
\end{array}
\]
Letting $n \to \infty$ we get a contradiction.
\end{proof}

Here is our first main result:

\begin{theorem}\label{teo_main_critical}
	Let $(M,g)$ be a complete Riemannian manifold, and assume that there exists $V \in C^{0,\alpha}_\loc(M)$ such that
	\begin{equation}\label{eq_iporicci}
	\Ric \ge -\beta V g \qquad \text{on } \, M, \qquad L \doteq -\Delta - V \ge 0,
	\end{equation}
	where
	\[
	0<\beta \leq \frac{n-1}{n-2}.
	\]
	If either
	\begin{itemize}
		\item[(i)] $n=3$, or
		\item[(ii)] $n \ge 4$, and there exists a compact set $Z$ and a weak solution $0<w \in C(\overline{M\backslash Z})$ to $Lw \ge 0$ satisfying
		\begin{equation}\label{eq_growth_w}
		w(x) = o \big( r(x) \big)^{\frac{n-1}{\beta(n-3)}} \qquad \text{as } \, r(x) \to \infty,
		\end{equation}
	\end{itemize}
	where $r(x)$ is the distance from a fixed origin, then the following holds: either
	\begin{itemize}
		\item $M$ has only one end, or
		\item $L$ is critical and, for each $x \in M$,
		\begin{equation}\label{eq_lowereige_Ric}
			\min_{X \in T^1_xM} \left( \Ric(X,X) + \beta V(x)\right) =0.
		\end{equation}
	\end{itemize}
\end{theorem}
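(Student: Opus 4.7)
Suppose $M$ has more than one end. We aim to deduce simultaneously (a) criticality of $L$ and (b) pointwise saturation of the Ricci bound. The driving idea is the ``conformal method'': pick any positive $u\in C^2(M)$ with $Lu\le 0$ (existing since $L\ge 0$) and form $\bar g = u^{2\beta}g$. Having more than one end is purely topological, so $(M,\bar g)$ also has at least two ends, and Proposition \ref{prop_shortest_line} yields a $\bar g$-minimizing divergent curve $\gamma$. Since $(M,g)$ is complete and $\gamma$ is divergent, its $g$-arclength exhausts $\R$; reparametrizing $\gamma:\R\to M$ by $g$-arclength, any closed sub-interval is a $\bar g$-minimizing segment on which Proposition \ref{prop_basiccomp} applies.

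\textbf{The 1D inequality in dimension $n=3$.}
Substituting $\psi=\eta u^{-\beta/2}$ in \eqref{eq_fundamental} and integrating by parts, the right-hand side rearranges as
\[
(n-1)\int \psi_s^2\,ds \;+\;\beta(3-n)\int \eta\eta_s\tfrac{u_s}{u}u^{-\beta}\,ds\;+\;\beta\bigl[\tfrac{(3n-7)\beta}{4}-1\bigr]\int\tfrac{u_s^2}{u^2}\eta^2 u^{-\beta}\,ds.
\]
For $n=3$ the cross coefficient $\beta(3-n)$ vanishes identically, and for $\beta\le 2=(n-1)/(n-2)$ the last coefficient is non-positive. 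Combining with the three non-negative quantities $\Ric(\gamma_s,\gamma_s)+\beta V$, $|\nabla^\perp\log u|^2$ and $-Lu/u$ sitting on the left of \eqref{eq_fundamental} yields
\[
\int_\R \tilde V\,\psi^2\,ds\;\le\;2\int_\R \psi_s^2\,ds\qquad\forall\,\psi\in C^1_c(\R),
\]
with $\tilde V\doteq \Ric(\gamma_s,\gamma_s)+\beta V+\beta|\nabla^\perp\log u|^2-\beta Lu/u\ge 0$. Testing against the tent functions $\psi_L=\max\{1-|s|/L,0\}$ gives $\int_{-L/2}^{L/2}\tilde V\,ds\le C/L$; letting $L\to\infty$ and invoking continuity forces $\tilde V\equiv 0$ along $\gamma$. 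In particular, along the trace of $\gamma$ one has $\Ric(\gamma_s,\gamma_s)+\beta V\equiv 0$, $Lu\equiv 0$, and $\nabla u\parallel\gamma_s$.

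\textbf{Criticality and pointwise saturation.}
For (b), fix an arbitrary $x_0\in M$ and choose the compact set in Proposition \ref{prop_shortest_line} to be $K_\epsilon=K_0\cup\overline{B_\epsilon(x_0)}$, where $K_0$ is some fixed separator of the ends of $M$; then $M\setminus K_\epsilon$ still has $\ge 2$ ends, and by selecting the ball $B_\epsilon(x_0)$ sufficiently small and the exhaustion accordingly, one may arrange the base point of the constructed curve $\gamma^{(\epsilon)}$ to lie in $\overline{B_\epsilon(x_0)}$. Letting $\epsilon\to 0$ and extracting a subsequential limit using compactness of the unit tangent bundle gives a direction $X\in T^1_{x_0}M$ along which the preceding step yields $\Ric(X,X)+\beta V(x_0)=0$; combined with \eqref{eq_iporicci}, this is exactly \eqref{eq_lowereige_Ric}. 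For (a), assume $L$ subcritical; by Theorem \ref{teo_alternative} and Remark \ref{rem_hardy}, produce a Hardy weight $W\ge 0$, $W\not\equiv 0$ (e.g.\ $W=|\nabla G|^2/(4G^2)$ with $G$ the minimal Green kernel), positive on a dense open set by unique continuation. Then $L'\doteq\Delta+(V+W)\ge 0$ and $\Ric\ge -\beta(V+W)g$, so applying (b) to $L'$ gives $\min_X(\Ric(X,X)+\beta V(x)+\beta W(x))=0$ at every $x$. Subtracting the saturation \eqref{eq_lowereige_Ric} already proved for $L$ forces $W\equiv 0$, contradicting $W\not\equiv 0$. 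Hence $L$ is critical.

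\textbf{Higher dimensions and main obstacle.}
When $n\ge 4$, the coefficient $\beta(3-n)$ no longer vanishes and the last coefficient above is generally positive (e.g.\ $21/16$ when $n=4$, $\beta=3/2$), so the clean 1D inequality is unavailable for generic cut-offs $\eta$. The growth hypothesis \eqref{eq_growth_w} on the auxiliary supersolution $w$ is used to design adapted test functions of the form $\eta=\chi_R\,w^a$, with the exponent $a$ tuned so that the cross- and $B$-terms absorbed via Young's inequality cancel asymptotically as $R\to\infty$; the exponent $(n-1)/(\beta(n-3))$ in \eqref{eq_growth_w} is precisely the critical rate making this work. Once the 1D inequality is restored for all $\psi\in C^1_c(\R)$, the tent-function step and the saturation/criticality arguments apply unchanged. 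This cut-off engineering and the exact balance producing the exponent in \eqref{eq_growth_w} is the main technical obstacle; $n=3$ bypasses it entirely because $\beta(3-n)$ vanishes identically.
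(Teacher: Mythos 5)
Your one–dimensional inequality for $n=3$ is correct and is in fact the same substitution the paper uses (your $\theta=\beta/2$ coincides with the paper's $\theta=\beta(n-2)/(n-1)$ when $n=3$; note, though, that your quadratic coefficient should be $\beta\bigl[\tfrac{(n-1)\beta}{4}-1\bigr]$, which agrees with yours only at $n=3$). The tent-function step correctly yields $\Ric(\gamma_s,\gamma_s)+\beta V\equiv 0$, $Lu\equiv 0$ and $\nabla^\perp\log u\equiv 0$ along the line. The gap is in how you convert this into the two conclusions of the theorem.

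The fatal step is your proof of the pointwise saturation \eqref{eq_lowereige_Ric}: you claim that by taking the separator $K_\epsilon=K_0\cup\overline{B_\epsilon(x_0)}$ one can ``arrange the base point of the constructed curve $\gamma^{(\epsilon)}$ to lie in $\overline{B_\epsilon(x_0)}$''. Proposition \ref{prop_shortest_line} only guarantees that the minimizing curve meets $K$ \emph{somewhere} (because it joins different components of $M\setminus K$ and must therefore cross $K$); enlarging $K$ by a small ball far from where the line actually wants to go does not force the line through that ball. On a two-ended surface of revolution with a narrow neck, every minimizing line runs through the neck, and points away from it lie on no line at all. Since your criticality argument (a) is deduced from (b) applied to both $V$ and $V+W$, it inherits this gap; and for $n\ge 4$ you do not actually produce the adapted cut-offs, only assert that they exist. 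The paper's logic runs in the opposite direction and avoids lines through prescribed points entirely: it first proves criticality by producing a supersolution $u$ that is a genuine \emph{strict} supersolution ($Lu=-WG<0$) on a whole annular neighborhood $\Omega_2\setminus\overline\Omega_1$ of the separator, which the line is forced to traverse, contradicting $Lu\equiv 0$ along $\gamma$ (for $n\ge 4$ the comparison $G\le C\min\{w,v\}$ supplies the growth of $u$ along $\gamma$ needed to kill the boundary terms, with the \emph{standard} cut-offs $\psi_R$). It then obtains \eqref{eq_lowereige_Ric} by a perturbation argument: if $\Ric+\beta Vg>0$ at some $x_0$, choose $0\le w\not\equiv 0$ with $\Ric+\beta(V-w)g\ge 0$; then $L-w\ge L\ge 0$ is subcritical (it has Hardy weight $w$), yet the first part applied to $V-w$ says it is critical. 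Your $n=3$ criticality claim is salvageable without (b): if $L$ were subcritical, Theorem \ref{teo_alternative} gives $0<W\in C(M)$ with $L+W\ge 0$, hence a positive $u$ with $Lu=-Wu<0$ everywhere, contradicting $Lu\equiv 0$ along $\gamma$; the saturation should then be proved by the perturbation trick, not by moving the line.
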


\begin{proof}
	Assume that $M$ has at least two ends, and, by contradiction, that $L$ is subcritical. Pick an exhaustion $\{\Omega_j\}$ of $M$ with $Z \subset \Omega_1$ and so that $M \backslash \Omega_1$ has at least two ends $E_1$ and $E_2$. By $(i) \Longleftrightarrow (ii)$ in Theorem \ref{teo_alternative}, there exists $0 \le W \in C^\infty_c(\Omega_3)$ which is positive on $\overline{\Omega}_2$ and satisfies
	\[
\disp \int_\Omega 2W |\eta|^2 \di x \le Q_V (\eta) \qquad \forall \, \eta \in \lip_c(M).
\]
Thus, $L - W \ge 0$ and is subcritical. Picking a point $o \in \Omega_1$, by $(iv)$ in Theorem \ref{teo_alternative} there exists a minimal positive solution $G \in C^{2,\alpha}_\loc(M \backslash \{o\})$ to $LG - WG = \delta_o$. Fix also a global solution $0 < v \in C^{2,\alpha}_\loc(M)$ to $Lv = 0$ on $M$. Since $G$ is constructed as the limit of a sequence of kernels $G_j$ on $\Omega_j$ with Dirichlet boundary conditions, comparing $G_j$ to $w$ and $v$ on $\Omega_j \backslash \Omega_3$ (recall, $LG_j = 0$ there) and letting $j \to \infty$, there exists a constant $C>0$ such that $G \le C \min \big\{w,v\big\}$ on $M \backslash \Omega_3$. Thus, up to increasing $C$,
	\begin{equation}\label{eq_uw}
	G \le C \min \big\{w,v\big\} \qquad \text{on } \, M \backslash \Omega_1.
	\end{equation}
The function $z \doteq G/v$ therefore solves
\[
\left\{ \begin{array}{ll}
\diver\left( v^2 \nabla z\right) \le 0 & \quad \text{on } \, M \backslash\{o\} \\[0.2cm]
z(x) \to \infty & \quad \text{as } \, x \to o \\[0.2cm]
z \le C & \quad \text{on } \, M \backslash \Omega_1.
\end{array}\right.
\]
Consider a concave function $\eta \in C^2([0,\infty))$ satisfying $\eta(t) = t$ for $t \in [0,2C]$, $\eta(t) = 3C$ for $t \ge 4C$. Then, $\eta(z) \in C^2(M)$ solves
\[
\diver\left( v^2 \nabla \eta(z)\right) \le 0 \qquad \text{on } \, M,
\]
and thus
\[
u \doteq v \eta(z) \in C^2(M) \qquad \text{solves } \quad \left\{ \begin{array}{ll}
	L u \ge 0 & \quad \text{on } \, M \\[0.2cm]
	u = G & \quad \text{on } \, M \backslash \Omega_1.
\end{array}\right.
\]
Consider the conformal deformation
	\[
	\bar g = u^{2\beta} g.
	\]
We construct a $\bar g$-line $\gamma : (-T, T^*) \to M$ as in Proposition \ref{prop_shortest_line}. By property $(ii)$ of $\gamma$ and since $g$ is complete, it follows that once reparametrized by its  $g$-arclength $s$ the curve $\gamma$ is defined on the whole of $\R$. Also, because $M \backslash \Omega_1$ is disconnected and again by property $(ii)$ we deduce that $\gamma \cap \overline{\Omega}_1 \neq \emptyset$ and $\gamma \cap (\Omega_2 \backslash \overline\Omega_1) \neq \emptyset$. By Proposition \ref{prop_basiccomp}, for each $\eta \in C^1_c(\R)$ it holds
\[
	\begin{array}{l}
		\disp \beta \int_{-\infty}^{\infty} \frac{Lu}{u}\frac{\eta^2}{u^{\beta}} + \int_{-\infty}^{\infty}  \Big(\Ric(\gamma_{s},\gamma_{s}) + \beta V\Big)\frac{\eta^2}{u^{\beta}} \\[0.5cm]
		\qquad \le \disp (n-1)\int_{-\infty}^{\infty} \frac{(\eta_{s})^2}{\eta^2} \frac{\eta^2}{u^{\beta}} - \beta \int_{-\infty}^{\infty}\frac{(u_s)^2}{u^2} \frac{\eta^2}{u^{\beta}} - \beta(n-2)\int_{-\infty}^{\infty} \frac{u_s}{u} \left(\frac{\eta^2}{u^{\beta}}\right)_s,
	\end{array}
\]
where we omitted to write $\di s$ for the ease of notation. From \eqref{eq_iporicci},
	\[
	\begin{array}{lcl}
	\disp \beta\int_{-\infty}^\infty \frac{Lu}{u}\frac{\eta^2}{u^{\beta}} & \le & \disp(n-1) \int_{-\infty}^{\infty} \frac{(\eta_{s})^2}{\eta^2} \frac{\eta^2}{u^{\beta}} -\beta \int_{-\infty}^\infty \frac{(u_s)^2}{u^2} \frac{\eta^2}{u^{\beta}} - \beta(n-2)\int_{-\infty}^\infty \frac{u_s}{u} \left(\frac{\eta^2}{u^{\beta}}\right)_s.
	\end{array}
	\]
Write $\eta = u^\theta \psi$ for some $\theta \ge 0$ to be determined. Then,
	\[
\begin{array}{lcl}
	\disp \frac{\beta}{n-1}\int_{-\infty}^\infty \frac{Lu}{u}u^{2\theta-\beta}\psi^2  & \le & \disp \int_{-\infty}^{\infty} \left[ \theta^2 - \frac{2\theta\beta(n-2)}{n-1} +\frac{\beta^2(n-2)}{n-1} - \frac{\beta}{n-1}\right] \frac{(u_s)^2}{u^2}u^{2\theta-\beta}\psi^2\\[0.5cm]
	& & \disp + \int_{-\infty}^\infty (\psi_s)^2 u^{2\theta-\beta}  + 2\left[\theta-\frac{\beta(n-2)}{n-1}\right]  \int_{-\infty}^\infty \frac{u_s \psi\psi_s}{u} u^{2\theta-\beta}.
\end{array}
\]
Choosing $\theta=\frac{\beta(n-2)}{n-1}$, we conclude
\begin{equation}\label{eq_thekeyineq}
\frac{\beta}{n-1}\int_{-\infty}^\infty \frac{Lu}{u} u^{\frac{\beta(n-3)}{n-1}}\psi^2 \le  \int_{-\infty}^\infty (\psi_s)^2 u^{\frac{\beta(n-3)}{n-1}}
\end{equation}
Let $0\le \psi_R \le 1$ be an even function satisfying $\psi_R \equiv 1$ on $[-R,R]$, $\psi_R \in C^\infty_c((-2R,2R))$ and $|(\psi_R)_s| \le 4/R$. Then,
\begin{equation}\label{eq_cut}
\frac{\beta}{n-1} \int_{-R}^R \frac{Lu}{u} u^{\frac{\beta(n-3)}{n-1}} \le \frac{16}{R^2} \left[ \int_R^{2R} u^{\frac{\beta(n-3)}{n-1}} + \int_{-2R}^{-R} u^{\frac{\beta(n-3)}{n-1}}\right].
\end{equation}
If $n=3$, letting $R \to \infty$ we deduce $Lu \equiv 0$ along $\gamma$. Similarly, if $n \ge 4$, by \eqref{eq_uw} there exists $C$ such that
\begin{equation}\label{eq_uw_bis}
	u(\gamma(s)) \le C \big(1+w(\gamma(s))\big) \qquad \forall \, s \in \R.
\end{equation}
Assumption \eqref{eq_growth_w} (without loss of generality, we can assume that $r$ is the distance from $\gamma(0)$) and $|\gamma_s| = 1$ imply
\[
r\big(\gamma(s)\big) \le |s|,
\]
whence
\[
u(\gamma(s)) = o\big( |s|^{\frac{n-1}{\beta(n-3)}} \big) \qquad \text{as } \, |s| \to \infty.
\]
Plugging into \eqref{eq_cut} and letting $R \to \infty$ we again deduce $Lu \equiv 0$ along $\gamma$.\\
To conclude, we observe that
\[
Lu = LG = WG > 0 \qquad \text{on } \, \gamma \cap (\Omega_2 \backslash \overline{\Omega}_1) \neq \emptyset,
\]
leading to the desired contradiction. To prove \eqref{eq_lowereige_Ric}, assume by contradiction that
	\[
	\Ric + \beta Vg > 0 \qquad \text{at some } \, x_0 \in M,
	\]
	and fix $0 \le w \in C^\infty(M)$ not identically zero satisfying
	\[
	\Ric + \beta (V-w) g \ge 0 \qquad \text{on } \, M.
	\]
	Since $L + w \ge L \ge 0$, and $w \not \equiv 0$, $L + w$ is subcritical. However, we can apply the first part of the proof with $V-w$ replacing $V$ to deduce that $L + w$ is critical, contradiction.
\end{proof}

\begin{example}\label{ex_catenoid}
	We denote by  $C^n \to \R^{n+1}$, $n\geq 3$, the higher-dimensional catenoid. Following  \cite{tam_zhou} (see also \cite{DoCD}), $C^n$ depends on a real parameter $\phi_0>0$ and is defined by the map
	 \begin{align*}
		F : (-S,S) \times \mathbb{S}^{n-1} &\rightarrow \R^{n+1}, \\ \pa{s, \omega} &\mapsto \pa{\phi(s)\omega, s},
	\end{align*}
	where
	\[
	S = \int_{\phi_0}^\infty \frac{\di t}{\sq{(t/\phi_0)^{2(n-1)}-1}^{1/2}} < \infty
	\]
	and $\phi$ solves
	\[
		\begin{cases}
			\phi'' \sq{1+\pa{\phi'}^2}^{-\frac32}- \frac{n-1}{\phi} \sq{1+\pa{\phi'}^2}^{-\frac12} = 0 \\[0.2cm]
			\phi(0) = \phi_0 >0 \\
			\phi'(0) = 0.
		\end{cases}
	\]
By \cite{tam_zhou}, the second fundamental $|A|$ is positive on $C^n$, globally bounded (in fact, $C^n$ has finite total curvature) and solves
	\[
	\Delta |A|^{\frac{n-2}{n}} + \frac{n-2}{n} |A|^2 |A|^{\frac{n-2}{n}} = 0.
	\]
	Therefore, $C^n$ is $\frac{n-2}{n}$-stable:
	\[
	L \doteq -\Delta - V \ge 0, \qquad \text{where } \, V \doteq \frac{n-2}{n}|A|^2.
	\]
	By Gauss equation and the refined Kato inequality,
	\[
	\Ric \ge - \frac{n-1}{n}|A|^2 = \frac{n-1}{n-2} V,
	\]
	Hence, we can apply Theorem \ref{teo_main_critical} with $w = |A|^{\frac{n-2}{n}}$ to deduce that $L$ is critical on $C^n$.
\end{example}

We next examine the case where $V_+$ has compact support, namely, $\Ric \ge 0$ outside of a compact set. In this case, we can drop the dimensional restriction and get:

\begin{theorem}\label{teo_main_critical_cs}
	Let $(M,g)$ be a complete Riemannian manifold, and assume that there exists $V \in C^{0,\alpha}_\loc(M)$ with $V_+$ compactly supported such that
	\begin{equation}\label{eq_assu_cs}
	\Ric \ge - \frac{n-1}{n-2}V g \qquad \text{on } \, M, \qquad L \doteq -\Delta - V \ge 0.
	\end{equation}
	then, either
	\begin{itemize}
		\item $M$ has only one end, or
		\item $L$ is critical and, for each $x \in M$,
		\[
		\min_{X \in T^1_xM} \left( \Ric(X,X) + \frac{n-1}{n-2}V(x)\right) =0.
		\]
	\end{itemize}
\end{theorem}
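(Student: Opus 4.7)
The plan is to deduce Theorem \ref{teo_main_critical_cs} directly from Theorem \ref{teo_main_critical}, the key observation being that the compact support of $V_+$ makes the constant function $w \equiv 1$ a trivial positive supersolution of $L$ at infinity. This allows us to satisfy the hypotheses of case (ii) of the general theorem without having to construct a genuine solution with controlled growth.

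If $n = 3$, then condition \eqref{eq_assu_cs} is just \eqref{eq_iporicci} with $\beta = \frac{n-1}{n-2} = 2$, and the conclusion is precisely Theorem \ref{teo_main_critical}(i). For $n \ge 4$, I would set $Z \doteq \spt(V_+)$, which is compact by hypothesis, and consider $w \equiv 1 \in C(\overline{M \setminus Z})$. Since $V \le 0$ on $M \setminus Z$ by the definition of $Z$, one gets
\[
Lw = V \le 0 \qquad \text{on } M \setminus Z
\]
in the classical (hence weak) sense, while the growth condition
\[
w(x) = 1 = o\!\left(r(x)^{\frac{n-1}{\beta(n-3)}}\right) \quad \text{as } r(x) \to \infty
\]
is trivially satisfied, because the exponent $\frac{n-1}{\beta(n-3)} = \frac{n-2}{n-3}$ is strictly positive for $n \ge 4$. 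All the hypotheses of Theorem \ref{teo_main_critical}(ii) are therefore met, and its conclusion supplies both the dichotomy ``$M$ has only one end or $L$ is critical'' and the pointwise saturation of the Ricci lower bound.

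I do not anticipate any serious obstacle in this argument: the compactness of $\spt(V_+)$ is precisely what bypasses the growth estimate \eqref{eq_growth_w}, which is the technically delicate ingredient of Theorem \ref{teo_main_critical}(ii). Everything else---the conformal deformation $\bar g = u^{2\beta}g$, the construction of the $\bar g$-line provided by Proposition \ref{prop_shortest_line}, the integral inequality of Proposition \ref{prop_basiccomp}, and the final perturbation argument that promotes criticality to the pointwise identity \eqref{eq_lowereige_Ric}---is already packaged inside Theorem \ref{teo_main_critical} and can be used as a black box, since the class of potentials with $V_+$ compactly supported is preserved under the perturbations $V \mapsto V - w$ with $0 \le w \in C^\infty_c(M)$ employed in that argument.
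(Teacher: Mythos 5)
Your reduction is correct, and it takes a genuinely different (and shorter) route than the paper. The observation that $w\equiv 1$ is a positive supersolution of $L$ on $M\setminus \spt(V_+)$, because $V\le 0$ there, and that it trivially verifies the growth condition \eqref{eq_growth_w} (the exponent $\frac{n-1}{\beta(n-3)}=\frac{n-2}{n-3}$ being positive for $n\ge 4$), does place you squarely inside hypothesis (ii) of Theorem \ref{teo_main_critical} at the endpoint value $\beta=\frac{n-1}{n-2}$, which that theorem explicitly allows; the case $n=3$ is covered by hypothesis (i). Inside the proof of Theorem \ref{teo_main_critical} the barrier is only used to compare with the Dirichlet kernels $G_j$ outside a compact set, so $w\equiv 1$ yields $G\le C$ there, hence $u$ is bounded along the line $\gamma$ and the right-hand side of \eqref{eq_cut} vanishes as $R\to\infty$; nothing degenerates at the endpoint $\beta=\frac{n-1}{n-2}$. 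The paper instead proves the statement directly, without invoking the barrier hypothesis: it truncates the supersolution as $u_\tau=\xi_\tau(u)$ to force boundedness along $\gamma$, accepts the error term $V_+\bigl(u_\tau-u\,\xi_\tau'(u)\bigr)$ --- integrable along $\gamma$ precisely because $V_+$ has compact support and $\gamma$ is divergent --- and then lets $\tau\to\infty$ via Fatou and dominated convergence. That truncation technique is reused verbatim in the second case of Theorem \ref{teo_main_critical_2}, which is presumably why the authors present it; your route buys brevity and makes transparent that compact support of $V_+$ is simply a degenerate instance of the growth hypothesis \eqref{eq_growth_w}. One small remark: your closing justification for the saturation identity (that $V\mapsto V-w$ preserves compact support of $V_+$) is not the operative point when Theorem \ref{teo_main_critical} is used as a black box, since the saturation is already part of its stated conclusion; the check that matters is internal to that theorem's proof, namely that the constant barrier remains a supersolution for the perturbed operator, which it does because the perturbation is subtracted from the potential.
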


\begin{proof}
	Assuming, by contradiction, that $M$ has at least two ends and that $L$ is subcritical, we follow the proof of Theorem \ref{teo_main_critical} and construct $u,\gamma$. For $\tau \ge 1$, let $\xi : [0,\infty) \to \R$ be a concave, non-decreasing function satisfying
	\[
	\xi(x) = x \quad \text{for } \, x \in [0,1], \qquad \xi(x) = 2 \quad \text{for } \, x \ge 4, \qquad \xi'(x) \le 1 \ \ \forall \, x \in [0,\infty)
	\]
	and define
	\[
	\xi_\tau(x) = \tau \xi (x/\tau), \qquad u_\tau = \xi_\tau(u)
	\]	
	Then, $0 \le \xi_\tau' \le 1$, $\xi_\tau - x\xi_\tau' \ge 0$, $u_\tau \in L^\infty(M)$ and $\xi_\tau' \uparrow 1$, $u_\tau \uparrow u$ as $\tau \to \infty$. Moreover, 	
	\[	
	-L u_\tau \le \xi_\tau'(u)Lu + V(u_\tau - u \xi_\tau'(u)) \le \xi_\tau'(u)Lu + V_+(u_\tau - u \xi_\tau'(u))
	\]
	By using $\beta = \frac{n-1}{n-2}$ and $u_\tau$ in place of $u$, and proceeding as in Theorem \ref{teo_main_critical}, we obtain the following analogue of \eqref{eq_thekeyineq}:
	\begin{equation}\label{eq_good_compact}
		\frac{1}{n-2}\int_{-\infty}^\infty \frac{\xi_\tau'(u)Lu}{u_\tau} u_\tau^{\frac{n-3}{n-2}}\psi^2 - \frac{1}{n-2}\int_{-\infty}^\infty \frac{V_+(u_\tau-u\xi_\tau'(u))}{u_\tau} u_\tau^{\frac{n-3}{n-2}}\psi^2 \le  \int_{-\infty}^\infty (\psi_s)^2 u_\tau^{\frac{n-3}{n-2}}
	\end{equation}
	Consider the family of cutoffs $\psi_R$ as above, so  $\psi_R \uparrow 1$ pointwise as $R \to \infty$. Taking into account that the first integral in the left-hand side of \eqref{eq_good_compact} is non-negative and the second is restricted to $\{ s \in \R : V_+(\gamma(s)) \neq 0\}$, a compact set since $\gamma$ diverges, letting $R \to \infty$ and applying monotone and Lebesgue convergence theorems we get
	\[
	\int_{-\infty}^\infty \frac{\xi_\tau'(u)Lu}{u_\tau} u_\tau^{\frac{n-3}{n-2}} - \int_{-\infty}^\infty \frac{V_+(u_\tau-u\xi_\tau'(u))}{u_\tau} u_\tau^{\frac{n-3}{n-2}} \le 0.
	\]
	In particular, the first integral is finite. Letting $\tau \to \infty$ and applying Fatou's Lemma (to the first integral) and Lebesgue's Theorem (to the second), we conclude
	\[
	\int_{-\infty}^\infty \frac{Lu}{u} u^{\frac{n-3}{n-2}} \le 0.
	\]
	thus $Lu \equiv 0$ along $\gamma$. The rest of the argument follows verbatim the one in Theorem \ref{teo_main_critical}.
\end{proof}

\begin{remark}
	Theorem \ref{teo_main_critical_cs} should be compared to \cite[Corollary 4.2]{liwang_ens}. There, the authors assume \eqref{eq_assu_cs} with $V \ge 0$ and the polynomial volume growth
	\[
	|B_r| \le C r^{2(n-1)} \quad \text{for } \, r \ge 1,
	\]
	a condition satisfied when $V_+$ is compactly supported because of Bishop-Gromov volume comparison. Applying \cite{liwang_ens} we deduce that, in the setting of Theorem \ref{teo_main_critical_cs} with $V \ge 0$, either $M$ has only one \emph{non-parabolic} end, or $M$ splits as Example \ref{ex_2_liwang} below.
\end{remark}


To further comment on the assumptions in Theorem \ref{teo_main_critical} and introduce our next result, denote by $\lambda_1(M)$ the bottom of the Laplace spectrum on $M$.

\begin{example}\label{ex_2_liwang}
	This example is taken from \cite[Proposition 6.1]{liwang_ens}. Given $n \ge 3$, a manifold $(P^{n-1},h)$ and a function $0 < \eta \in C^\infty(\R)$, consider the warped product
	\[
	M = \R \times P, \qquad g = \di t^2 + \eta(t)^2 h
	\]
	Under the assumptions
	\[
	\eta'' > 0, \qquad (n-2) (\log\eta)'' + \eta^{-2} \Ric_P \ge 0,
	\]
	the manifold $M$ satisfies
	\[
	\Ric \ge - \frac{n-1}{n-2} V g \qquad \text{with} \qquad V = (n-2) \frac{\eta''}{\eta}.
	\]
	The function
	\[
	v = \int_0^t \frac{\di s}{\eta(s)^{n-1}}
	\]	
	is harmonic and $w = |\nabla v|^{\frac{n-2}{n-1}} = c \cdot \eta(t)^{2-n}$ is a positive solution to
	\[
	Lw \doteq -\Delta w - Vw = 0, \qquad \text{so } \, L \ge 0.
	\]
	Therefore, if $\eta$ is bounded from below by a positive constant on $\R$ and $P$ is compact, by Theorem \ref{teo_main_critical} the operator $L$ is critical. An example is given by the choices
	\[
	P \ \text{ compact with } \ \Ric_P \ge -(n-2)h, \qquad \eta(t) = \ch t
	\]
	considered in \cite{liwang_positive_1}. In this case, the resulting warped product metric $\di t^2 + (\ch t)^2h$ satisfies
	\[
	\Ric \ge -(n-1)g, \qquad -\Delta - (n-2) \ge 0 \quad \text{and is critical on $M$}.
	\]
	In particular, $\lambda_1(M) = n-2$.
\end{example}

\begin{example}\label{ex_1_liwang}
The following example shows that a growth condition on $w$ in Theorem \ref{teo_main_critical} is necessary to prove criticality in dimension $n \ge 4$. Consider in Example \ref{ex_2_liwang} the choices
\[
P \ \text{ compact with } \ \Ric_P \ge 0, \qquad \eta(t) = e^t,
\]
see \cite[Example 2.2]{liwang_positive_1}. The resulting warped product metric $\di t^2 + e^{2t}h$ satisfies
\[
\Ric \ge -(n-1)g, \qquad L \doteq -\Delta - (n-2) \ge 0.
\]
In particular, $\lambda_1(M) \ge n-2$. However, the positive solution $w = e^{(2-n)t}$ to $Lw =0$ singled out in Example \ref{ex_2_liwang} does not satisfy \eqref{eq_growth_w}, thus in dimension $n \ge 4$ Theorem \ref{teo_main_critical} is not applicable to guarantee that $L$ is critical. Indeed, we show that $L$ is subcritical on $M$. To see this, for functions $u$ of $t$ only, equation $\Delta u + (n-2)u=0$ becomes
\[
u''(t) + (n-1)u'(t) + (n-2)u(t) = 0,
\]
whose general solution is spanned by $\{e^{-(n-2)t}, e^{-t}\}$ in dimension $n \ge 4$ and by $\{e^{-t}, te^{-t}\}$ in dimension $3$. Thus, if $n \ge 4$ the operator $L$ admits the positive solutions $e^{-(n-2)t}$ and $e^{-t}$ which are not proportional, hence it is subcritical. On the other hand, if $n=3$ the only positive solution just depending on $t$ is, up to scaling, the function $e^{-t}$. In fact, Theorem \ref{teo_main_critical} guarantees that $L$ is critical.\\
Let us examine this example further. The function
\[
\hat w = e^{-\frac{n-1}{2}t} \qquad \text{solves} \qquad \Delta \hat w + \frac{(n-1)^2}{4} \hat w = 0;
\]
hence,
\begin{equation}\label{eq_spectrum}
\lambda_1(M) \ge \frac{(n-1)^2}{4}
\end{equation}
(in fact, equality holds in \eqref{eq_spectrum} by Cheng's eigenvalue estimate).  Equivalently, since $\Ric \ge -(n-1)$ this can be rewritten as follows:
\[
\Ric  \ge - \frac{4}{n-1} \hat{V}, \qquad \hat{L} \doteq -\Delta - \hat{V} \ge 0 \quad \text{with } \, \hat{V} = \frac{(n-1)^2}{4}.
\]
Note that $\frac{(n-1)^2}{4} \ge (n-2)$, with equality iff $n=3$. We claim that $\hat{L}$ is critical, and to see this we check each end separately.
Let $E^{(1)} = (-\infty,0) \times P$ and $E^{(2)} = (1,\infty) \times P$ be the ends with respect to $K = [0,1]\times P$. Consider $\hat w$ as a supersolution, and let $u^{(i)}$ be the minimal solution on $E^{(i)}$ with respect to $\hat w$. Then, by construction $u^{(i)} = \lim_{j \to \infty} u^{(i)}_j$ where
\[
\left\{ \begin{array}{ll}
	\hat{L} u^{(1)}_j = 0 & \text{on } \, [-j,0] \times P, \\[0.2cm]
	u^{(1)}_j = 1 & \text{ on } \, \{0\} \times P, \\[0.2cm]
	u^{(1)}_j = 0 & \text{ on } \, \{-j\} \times P.	
\end{array}
\right. \qquad
\left\{ \begin{array}{ll}
	\hat{L} u^{(2)}_j = 0 & \text{on } \, [1,j] \times P, \\[0.2cm]
	u^{(2)}_j = e^{-\frac{n-1}{2}} & \text{ on } \, \{1\} \times P, \\[0.2cm]
	u^{(2)}_j = 0  & \text{ on } \, \{j\} \times P.	
\end{array}
\right.
\]
By uniqueness of solutions, $u^{(i)}_j$ only depends on $t$ (since its average over $P$ is still a solution). The general solution to the ODE
\[
-\hat L u = u''(t) + (n-1)u'(t) + \frac{(n-1)^2}{4} u(t) = 0
\]
is $e^{-\frac{n-1}{2}t}(a + b t)$ with $a,b \in \R$, whence
\[
u^{(1)}_j(t) = \frac{j+t}{j} e^{-\frac{n-1}{2}t}, \qquad u^{(2)}_j(t) = \frac{j-t}{j-1}e^{-\frac{n-1}{2}t}.
\]
Letting $j \to \infty$ we get $u^{(1)} = \hat w$ and $u^{(2)} = \hat w$, therefore $\hat L$ is $\hat w$-critical on each end. By Proposition \ref{teo_subcrit_ends}, $\hat L$ is critical on $M$.
\end{example}


Example \ref{ex_1_liwang} may suggest that if
\[
	\Ric \ge - \frac{4}{n-1}V g \qquad \text{on } \, M, \qquad L \doteq -\Delta - V \ge 0,
\]
then the dichotomy in the conclusion of Theorem \ref{teo_main_critical} holds in any dimension $n \ge 3$ without the growth condition \eqref{eq_growth_w}. It is interesting, in this respect, to compare to \cite[Theorem C]{liwang_ens} and to the detailed ODE analysis in Sections 6 and 8 therein. In the next result, Theorem \ref{chegrointro} in the Introduction, we are able to obtain a stronger conclusion when $\beta < \frac{4}{n-1}$. We restate the result for the convenience of the reader.

\begin{theorem}\label{teo_main_critical_2}
	Let $(M,g)$ be a complete Riemannian manifold of dimension $n \ge 3$, and assume that there exists $V \in C^{0,\alpha}_\loc(M)$ such that
	\begin{equation}\label{eq_iporicci_2}
		\Ric \ge - \beta V g \qquad \text{on } \, M, \qquad L \doteq -\Delta - V \ge 0,
	\end{equation}
	where either
	\[
	\begin{array}{ll}
	\disp \quad \quad 0<\beta < \frac{4}{n-1}, & \quad \text{or }  \\[0.5cm]
	\disp \frac{4}{n-1} \le \beta < \frac{n-1}{n-2}  & \quad \text{and $V_+$ is  compactly supported.}
	\end{array}
	\]


	Then, either
	\begin{itemize}
		\item[(i)] $M$ has only one end, or
		\item[(ii)] $V \equiv 0$ and $M = \R \times P$ with the product metric, for some compact $P$ with $\Ric_P \ge 0$.	
	\end{itemize}
\end{theorem}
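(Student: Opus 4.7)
The argument proceeds by contradiction: assume $M$ has at least two ends and $V\not\equiv 0$. The strategy has two layers. At the top, Theorem~\ref{teo_subcrit} applied with $V_0=0$ and $V_1=V$ (both in $\mathscr K$, since $\Delta\ge 0$ always) shows that every convex combination $L_{tV}=\Delta+tV$ with $t\in(0,1)$ is strictly inside the cone $\mathscr K$, i.e.\ is \emph{subcritical}. If we can show, conversely, that $L_{tV}$ must be \emph{critical} for some such $t$, we obtain the desired contradiction.

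To produce criticality we need to sharpen Theorem~\ref{teo_main_critical} (for Case~1, $\beta<4/(n-1)$) and Theorem~\ref{teo_main_critical_cs} (for Case~2) so as to remove any growth hypothesis on the positive supersolution $u$. The novelty in Case~1 is to revisit Proposition~\ref{prop_basiccomp} using the test function $\eta=u^\theta\psi$ with the \emph{new choice} $\theta=\beta/2$, which produces the exponent $2\theta-\beta=0$ on $u$ --- so no power of $u$ appears in the integrands. The cost is a cross term with coefficient $2[\theta-\beta(n-2)/(n-1)]=\beta(3-n)/(n-1)$, absorbed via Young's inequality with a small parameter $\varepsilon>0$: the resulting coefficient of $(u_s)^2/u^2$ equals $\beta^2/4-\beta/(n-1)+\varepsilon$, and can be made strictly negative precisely when $\beta<4/(n-1)$. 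After dropping that non-positive bulk term, the inequality reads
\[
\frac{\beta}{n-1}\int_{-\infty}^{\infty}\frac{-Lu}{u}\psi^2\,\di s \le C(n,\beta)\int_{-\infty}^{\infty}(\psi_s)^2\,\di s,
\]
and choosing $\psi=\psi_R$ with $\int (\psi_R)_s^2\le 32/R$ forces $Lu\equiv 0$ along the shortest line $\gamma$ of Proposition~\ref{prop_shortest_line}, leading to the same contradiction as in Theorem~\ref{teo_main_critical}. In Case~2 ($\beta\in[4/(n-1),(n-1)/(n-2))$ with $V_+$ compactly supported), the truncation scheme $u_\tau=\xi_\tau(u)$ of Theorem~\ref{teo_main_critical_cs} extends verbatim: only the bound $\beta\le(n-1)/(n-2)$ is actually used there, and for smaller $\beta$ the exponent $\beta(n-3)/(n-1)$ is only smaller, making the passage $R\to\infty$ and $\tau\to\infty$ easier. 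In both cases, under the two-ended hypothesis $L_V$ is critical.

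Combining the two layers: by the sharpened criterion just described, for every $t<1$ close enough to $1$ so that $\beta/t$ still lies in the admissible range --- namely $t\in(\beta(n-1)/4,1)$ in Case~1, or $t\in(\beta(n-2)/(n-1),1)$ in Case~2, both nonempty by hypothesis --- the perturbed operator $L_{tV}$ satisfies $\Ric\ge -(\beta/t)(tV)g$ and $L_{tV}\ge 0$, and $(tV)_+=tV_+$ is compactly supported whenever $V_+$ is. Hence the sharpened criterion applied to $L_{tV}$ forces $L_{tV}$ critical, contradicting the subcriticality supplied by Theorem~\ref{teo_subcrit}. Therefore $V\equiv 0$.

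Once $V\equiv 0$ the hypothesis becomes $\Ric\ge 0$ on $M$; since $M$ has more than one end it contains a line, and the Cheeger-Gromoll splitting theorem gives $M=\R\times P$ isometrically, with $(P,g_P)$ complete and $\Ric_P\ge 0$. A direct end-count in a Riemannian product $\R\times P$ shows that $M$ has strictly more than one end if and only if $P$ is compact, which yields conclusion~(ii). The main technical obstacle is the first step: verifying that the choice $\theta=\beta/2$ in Proposition~\ref{prop_basiccomp}, together with the Young absorption of the ensuing cross term, produces a non-trivial inequality exactly in the range $\beta<4/(n-1)$. This is the refinement that lets us bypass the growth condition~\eqref{eq_growth_w} in dimension $n\ge 4$ and thus apply the criticality machinery to $L_{tV}$ for $t$ arbitrarily close to~$1$, which is precisely what the convex-combination argument through Theorem~\ref{teo_subcrit} requires.
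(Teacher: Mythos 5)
Your proposal is correct and follows essentially the same route as the paper: the paper also rescales the potential (writing $\hat V=(\beta/\sigma)V$ with $\sigma\in(\beta,4/(n-1))$, which is your $L_{tV}$ with $t=\beta/\sigma$), invokes Theorem~\ref{teo_subcrit} to get subcriticality of the perturbed operator unless $V\equiv 0$, and derives criticality from the two-ended hypothesis via Proposition~\ref{prop_basiccomp} with the test function $\eta=u^{\sigma/2}\psi$ — exactly your choice $\theta=\beta/2$ killing the power of $u$, with the Young absorption working precisely for $\sigma<4/(n-1)$ — while Case 2 is handled by the truncation scheme of Theorem~\ref{teo_main_critical_cs} with $\sigma$ replacing $(n-1)/(n-2)$, and the conclusion follows from Cheeger--Gromoll. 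Your computation of the coefficient $\beta^2/4-\beta/(n-1)$ and the end-count for $\R\times P$ match the paper's argument.
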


\begin{proof}
	We first examine the case $\beta \in \left(0, \frac{4}{n-1}\right)$. Let us choose $\sigma \in \left(\beta , \frac{4}{n-1}\right)$ and set
	\[
	\hat V \doteq \frac{\beta}{\sigma} V.
	\]
	Then, by Theorem \ref{teo_subcrit}
	\begin{equation}\label{eq_perturb}
	\Ric \ge - \sigma \hat V g, \qquad \hat L \doteq -\Delta - \hat V \ge 0.
	\end{equation}
	Moreover, since $-\Delta \ge 0$, $-\Delta - V \ge 0$ and $\beta/\sigma \in (0,1)$, by Theorem \ref{teo_subcrit} $\hat L$ is subcritical unless $\hat V \equiv 0$, that is, unless $V \equiv 0$. Let $0 < u \in C^{2,\alpha}_\loc(M)$ solve $\hat L u \ge 0$, and consider the conformal deformation $\bar g = u^{2\sigma}g$. If we assume that $M$ has at least two ends, using Proposition \ref{prop_shortest_line} we guarantee the existence of a $\bar g$-line $\gamma : (T, T^*) \to M$ in $(M,\bar g)$. By $(ii)$ in Proposition \ref{prop_shortest_line} and since $g$ is complete, it follows that once reparametrized by $g$-arclength $s$ the curve $\gamma$ is defined on the whole of $\R$. Proposition \ref{prop_basiccomp} implies
	\[		
	\begin{array}{l}
		\disp \sigma \int_{-\infty}^{\infty} \frac{\hat Lu}{u}\frac{\eta^2}{u^{\sigma}} + \int_{-\infty}^{\infty}  \Big(\Ric(\gamma_{s},\gamma_{s}) + \sigma \hat V\Big)\frac{\eta^2}{u^{\sigma}}  \\[0.5cm]
		\qquad \le \disp (n-1)\int_{-\infty}^{\infty} \frac{(\eta_{s})^2}{\eta^2} \frac{\eta^2}{u^{\sigma}} - \sigma \int_{-\infty}^{\infty}\frac{(u_s)^2}{u^2} \frac{\eta^2}{u^{\sigma}} - \sigma(n-2)\int_{-\infty}^{\infty} \frac{u_s}{u} \left(\frac{\eta^2}{u^{\sigma}}\right)_s,
	\end{array}
	\]
	holds for every $\eta \in C^1_c(\R)$. Writing $\eta = u^{\sigma/2} \psi$ and rearranging we get
	\[
	\begin{array}{l}
		\disp \sigma \int_{-\infty}^\infty \frac{\hat Lu}{u}\psi^2 + \int_{-\infty}^\infty  \Big(\Ric(\gamma_{s},\gamma_{s}) + \sigma \hat V\Big)\psi^2 \\[0.5cm]
		\qquad \le \disp \sigma\left[ \frac{(n-1)\sigma}{4}-1\right] \int_{-\infty}^{\infty} \frac{(u_s)^2}{u^2}\psi^2 - \sigma(n-3)\int_{-\infty}^\infty \frac{u_s \psi\psi_s}{u} + (n-1)\int_{-\infty}^\infty (\psi_s)^2.
	\end{array}
	\]
	By \eqref{eq_perturb} and since $\sigma < \frac{4}{n-1}$, applying Young's inequality to the second term of the right-hand side we deduce the inequality
	\[
	\int_{-\infty}^\infty \frac{\hat Lu}{u}\psi^2 \le C \int_{-\infty}^\infty (\psi_s)^2
	\]	
	for some constant $C>0$. By choosing an even cut-off $0\le \psi_R \le 1$ such that $\psi_R \equiv 1$ on $[-R,R]$, $\psi_R \in C^\infty_c((-2R,2R))$ and $|(\psi_R)_s| \le 4/R$, letting $R \to \infty$ we deduce that $\hat Lu=0$ along $\gamma$. As $u$ can be any supersolution, we actually proved that any $C^{2,\alpha}_\loc$ supersolution $u$ satisfy $\hat Lu = 0$ on some curve. If $\hat L$ were subcritical, then by Theorem \ref{teo_alternative} there would exist $0 < W \in C(M)$ such that $\hat L - W \ge 0$. By approximating $W$ from below, we can assume that $W \in C^\infty(M)$. It would therefore exist a positive solution $u \in C^{2,\alpha}_\loc(M)$ to $\hat L u = Wu > 0$ on the entire $M$, contradiction. We conclude that $\hat L$ is critical, thus $V \equiv 0$. The conclusion in $(ii)$ thus follows from Cheeger \& Gromoll's splitting theorem \cite{chgr}.\\[0.2cm]
	If $\beta \in \left[ \frac{4}{n-1}, \frac{n-1}{n-2}\right)$ and $V_+$ is compactly supported, fix $\sigma \in \left(\beta, \frac{n-1}{n-2}\right)$ and $\hat V, \hat L$ as above. We follow the proof of Theorem \ref{teo_main_critical_cs} with $\sigma$ replacing $\frac{n-1}{n-2}$ up to \eqref{eq_good_compact}, which in view of \eqref{eq_cut} now becomes
		\[
		\frac{\sigma}{n-1}\int_{-\infty}^\infty \frac{\xi_\tau'(u)\hat Lu}{u_\tau} u_\tau^{\frac{\sigma(n-3)}{n-1}}\psi^2 - \frac{\sigma}{n-1}\int_{-\infty}^\infty \frac{\hat V_+(u_\tau-u\xi_\tau'(u))}{u_\tau} u_\tau^{\frac{\sigma(n-3)}{n-1}}\psi^2 \le  \int_{-\infty}^\infty (\psi_s)^2 u_\tau^{\frac{\sigma(n-3)}{n-1}}.
		\]
	Proceeding as in Theorem \ref{teo_main_critical_cs} we deduce that $\hat L$ is critical. The conclusion follows as in the first part of the proof.
\end{proof}

\begin{remark}
	If $\beta < \frac{4}{n-1}$ and the strict inequality $\Ric > -\beta V g$ in \eqref{eq_iporicci_2} holds at every point of $M$, the conclusion that $M$ must have only one end was obtained in \cite[Theorem 1.1]{xucheng}. 
\end{remark}

In view of Theorem \ref{teo_main_critical_2}, some of the applications of Cheeger \& Gromoll's theorem to the geometry of manifolds with $\Ric \ge 0$ directly extend to manifolds with spectral lower Ricci bounds. We mention, for instance, the following corollary that generalizes \cite{shen_sormani,car_ped}.
\begin{corollary}\label{cor_car_ped}
Let $(M,g)$ be a complete Riemannian manifold of dimension $n \ge 3$ satisfying the assumptions of Theorem \ref{teo_main_critical_2}. Then, one of the following cases occurs:
\begin{itemize}
	\item[(i)] $M$ is one-ended, $H^1_c(M) = 0$ and, if $M$ is orientable, $H_{n-1}(M,\mathbb{Z}) = 0$;
	\item[(ii)] $V \equiv 0$, $M$ is one-ended and it is the determinant line bundle of a compact manifold $P$ satisfying $\Ric_P \ge 0$, with locally the metric $\di t^2 + g_P$ where $t$ is the arclength of the fibers. 
	\item[(iii)] $V \equiv 0$ and $M = \R \times P$ with the product metric, for some compact orientable $P$ with $\Ric_P \ge 0$. 
\end{itemize}
\end{corollary}
\begin{remark}
	Note that $M$ deformation retracts onto $P$ in cases {\em (ii)} and {\em (iii)}, thus  
	\begin{itemize}
		\item[] in {\em (ii)}, $H_{n-1}(M,\mathbb{Z}) = 0$ if $M$ is orientable and $H_{n-1}(M,\mathbb{Z}) = \mathbb{Z}$ if $M$ is nonorientable;
		\item[] in {\em (iii)}, $H_{n-1}(M,\mathbb{Z}) = \mathbb{Z}$ if $M$ is orientable and $H_{n-1}(M,\mathbb{Z}) = 0$ if $M$ is nonorientable.
	\end{itemize}
\end{remark}

\begin{proof}
	The argument  \emph{verbatim} follows \cite[Proposition 5.3]{car_ped}, so we only sketch it. If $M$ has two ends, then {\emph (iii)} holds by Theorem \ref{teo_main_critical_2}. If $M$ has only on end, let $\pi : \hat M \to M$ be any twofold normal covering. The spectral condition in Theorem \ref{teo_main_critical_2} lifts to $\hat M$ with $\hat V = V \circ \pi$, so if $\hat M$ has two ends, then Theorem \ref{teo_main_critical_2} applies to give $\hat M = \R \times P$ and 
	\[
	M = \frac{\R \times P}{\langle {\rm Id}, \tau \rangle}
	\]
	for some isometry $\tau$ with $\tau^2 = {\rm Id}$. Since $P$ is compact and $\tau$ preserves lines, $\tau$ must be of the form $\tau(t,y) = (-t + a, f(y))$ for some isometry $f : P \to P$ and some $a \in \R$. Hence, case \emph{(ii)} occurs. Eventually, if both $M$ and $\hat M$ have only one end, case \emph{(i)} follows by \cite[Proposition 5.2]{car_ped}.
\end{proof}

In general, in case $(i)$ of Theorem \ref{teo_main_critical_2} the metric $\bar g = u^{2\beta}g$ may not be complete. To see this, consider Euclidean space, $V \equiv 0$ and $u$ a smoothing of the Green kernel centered at the origin:
\[
u(x) = \tau(|x|^{2-n})
\]
for a concave function $\tau : [0,\infty) \to \R$ satisfying $\tau(t) = t$ for $t \le 1$ and $\tau \equiv 2$ on $[4,\infty)$. Then, $\Delta u \le 0$ on $\R^n$ but $\bar g$ is incomplete for each
\[
\beta \in \left( \frac{1}{n-2}, \frac{4}{n-1} \right).
\]
The next theorem shows that for $\beta < (n-2)^{-1}$ the fact does not occur.

\begin{theorem}\label{teo_main_critical_3}
	Let $(M,g)$ be a complete Riemannian manifold of dimension $n \ge 3$, and assume that there exists $V \in C^{0,\alpha}_\loc(M)$ such that
	\begin{equation}\label{eq_iporicci_2}
		\Ric \ge - \beta V g \qquad \text{on } \, M, \qquad L \doteq -\Delta - V \ge 0
	\end{equation}
	with
	\[
	0<\beta < \frac{1}{n-2}.
	\]
	Then, for each $0 < u \in C^2(M)$ solving $Lu \ge 0$, the metric $\bar g = u^{2\beta} g$ is complete and satisfies
	\[
	\overline{\Ric}_f^N \ge \Ric + \beta V g \ge 0,
	\]
	where
	\[
	f = (n-2)\beta \log u \quad \text{and }\,\, \qquad N = n + \frac{\beta (n-2)^2}{1-\beta(n-2)}>n.
	\]
\end{theorem}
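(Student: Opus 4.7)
The theorem splits into two essentially independent assertions, the pointwise Bakry-\'Emery Ricci inequality and the completeness of $\bar g$, and I would treat them in this order.

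For the curvature bound, I would set $\varphi = \beta \log u$, so that $\bar g = e^{2\varphi}g$ and $f = (n-2)\varphi$, and apply the conformal identity \eqref{eq_ric_weight} already recorded in the excerpt: $\overline{\Ric}_f = \Ric - (n-2)\,d\varphi\otimes d\varphi - (\Delta\varphi)g$. With $d\varphi = \beta\, du/u$ and $\Delta\varphi = \beta\bigl(\Delta u/u - |\nabla u|^2/u^2\bigr)$, the supersolution hypothesis $\Delta u + Vu \le 0$ gives $-\Delta\varphi \ge \beta V + \beta|\nabla u|^2/u^2$; together with $df\otimes df = (n-2)^2\beta^2\,du\otimes du/u^2$, this produces
\[
\overline{\Ric}_f^N - \bigl(\Ric+\beta V g\bigr) \ge \beta\frac{|\nabla u|^2}{u^2}g - \Bigl[(n-2)\beta^2 + \frac{(n-2)^2\beta^2}{N-n}\Bigr]\frac{du\otimes du}{u^2}.
\]
Because $|\nabla u|^2 g \ge du\otimes du$ pointwise (Cauchy-Schwarz), the right-hand side is non-negative exactly when the bracketed coefficient is at most $\beta$. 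The resulting constraint on $N-n$ is positive and finite precisely thanks to the strict inequality $\beta(n-2) < 1$, yielding the value announced in the statement. Combined with the hypothesis $\Ric + \beta V g \ge 0$, this gives $\overline{\Ric}_f^N \ge \Ric + \beta V g \ge 0$.

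For completeness I would argue by contradiction. If $(M,\bar g)$ is incomplete, there is an inextendible $\bar g$-geodesic $\bar\gamma:[0,T)\to M$ with $T<\infty$. Since $(M,g)$ is complete, local ODE theory rules out $\bar\gamma$ remaining in a $g$-compact subset, so after reparametrizing by $g$-arclength $s$ I obtain a divergent curve $\gamma:[0,\infty)\to M$ with $|\gamma_s|_g=1$ and finite $\bar g$-length $\int_0^\infty u^\beta\, ds = T$. On every sufficiently short subinterval the reparametrized $\gamma$ remains $\bar g$-minimizing, so I would apply Proposition \ref{prop_basiccomp} on such subintervals with the test function $\eta = u^{(n-2)\beta/(n-1)}\psi$ from the proof of Theorem \ref{teo_main_critical}. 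After discarding the non-negative contributions supplied by Step 1 and absorbing the cross term $\int (u_s/u)\psi\psi_s\,ds$ via Young's inequality -- an operation made possible precisely by $\beta(n-2)<1$ -- one arrives at an estimate of the form
\[
\int_a^b \frac{-Lu}{u}\psi^2 u^{\beta(n-3)/(n-1)}\, ds \le C\int_a^b (\psi_s)^2 u^{\beta(n-3)/(n-1)}\, ds,
\]
and a scaling argument with plateau cut-offs $\psi_R$ supported on intervals of length $2R$, together with the finiteness of $\int_0^\infty u^\beta\, ds$ and the $g$-divergence of $\gamma$, yields the contradiction.

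The genuine difficulty lies in Step 2. The algebraic manipulation in Step 1 is essentially Cauchy-Schwarz bookkeeping that pinpoints the critical value of $N$, classical once one knows the analogue for stable minimal surfaces in Fischer-Colbrie \cite{fischercolbrie}. By contrast, the completeness argument demands a careful passage between the two metrics: a $\bar g$-geodesic is only locally $\bar g$-minimizing a priori, so the estimate from Proposition \ref{prop_basiccomp} must be applied on short subsegments and then assembled along a diverging curve, and the strict inequality $\beta(n-2)<1$ is exactly what prevents the resulting integral inequality from degenerating as the segments are taken longer and longer.
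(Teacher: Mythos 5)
Your Step 1 is the right computation (the paper in fact omits it), but be careful with the constant: writing everything out, the remainder is
\[
\beta\frac{|\nabla u|^2}{u^2}\,g-\Bigl[(n-2)\beta^2+\tfrac{(n-2)^2\beta^2}{N-n}\Bigr]\frac{\di u\otimes \di u}{u^2},
\]
and taking $X$ parallel to $\nabla u$ shows non-negativity is \emph{equivalent} to $N-n\ge \frac{(n-2)^2\beta}{1-(n-2)\beta}$. This coincides with the value in the statement only for $n=3$; for $n\ge 4$ it is strictly larger, so your claim that the constraint ``yields the value announced'' is not what your own computation gives (the discrepancy appears to originate in the statement itself, but you should not assert agreement without checking).

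The serious gaps are in Step 2, which is where all the content of the theorem lies. First, an inextendible finite-length $\bar g$-geodesic is only \emph{locally} minimizing, and Proposition \ref{prop_basiccomp} applied on short subintervals cannot be ``assembled'': non-negativity of the index form on each small piece never implies non-negativity for test functions supported on long intervals (conjugate points are exactly the obstruction). The paper instead invokes \cite[Lemma 2.2]{mmrs} to produce a divergent ray that is $\bar g$-minimizing between \emph{any} pair of its points; without that lemma (or a substitute) your second-variation inequality is simply not available globally. Second, and more fundamentally, the inequality you land on, $\int(-Lu/u)\psi^2u^{\beta(n-3)/(n-1)}\le C\int(\psi_s)^2u^{\beta(n-3)/(n-1)}$, cannot produce a contradiction: here $u$ is an \emph{arbitrary} supersolution, possibly an exact solution with $Lu\equiv 0$, so the left-hand side may vanish identically, and nothing in this inequality interacts with the finiteness of $\int_0^\infty u^\beta\,\di s$. (In Theorem \ref{teo_main_critical} the contradiction came from a specially constructed $u$ with $Lu<0$ somewhere on $\gamma$; no such $u$ exists here.) Moreover your substitution $\eta=u^{(n-2)\beta/(n-1)}\psi$ is tuned to annihilate the term $\int(u_s/u)^2\eta^2u^{-\beta}$, which is exactly the term the paper must \emph{keep}: the actual argument substitutes $\eta=u^\beta\psi$, retains $\beta(1-\beta)\int\psi^2u^{\beta-2}(u_s)^2$, performs a second integration by parts with a free parameter $t>1$ (this is where $\beta<\frac{1}{n-2}$ enters, to make a certain quadratic in $t$ negative), and only then, via the choice $\psi=s\varphi$, extracts the lower bound $\int_0^R u^\beta\le C\int_R^\infty u^\beta$, which is what forces $\int_0^\infty u^\beta=\infty$. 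None of this mechanism is present in your sketch, so as written the completeness claim is not proved.
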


\begin{remark}\label{rem-border}
	In the borderline case $\beta = (n-2)^{-1}$, the same computations show that the $\infty$-Bakry-Emery Ricci tensor $\overline{\Ric}_f$ is non-negative on $M$ (cf. \cite[Proposition 3.3]{irv}). However, in this case we are not able to prove that $\bar g$ is complete, nor we are aware of counterexamples to this fact.
\end{remark}

\begin{proof}
	
	By \cite[Lemma 2.2]{mmrs}, one can construct a ``shortest ray" $\gamma$ in $(M,\bar g)$ issuing from a fixed origin $o$, that is, a divergent curve minimizing $\bar g$-distance between any pair of its points, with the property that $(M,\bar g)$ is complete if and only if $\gamma$ has infinite $\bar g$-length. Parametrizing $\gamma$ by $g$-arclength $s$, since $g$ is complete $\gamma$ is defined for $s \in [0,\infty)$; hence, to get completeness we only have to check that
	\[
	\ell_{\bar g}(\gamma_{[0, \infty)}) = \int_{0}^\infty u^\beta \di s = \infty.
	\]
	Since $\gamma$ is $\bar g$-minimizing, by equation \eqref{eq_fundamental} in Proposition \ref{prop_basiccomp} we deduce, using the hypothesis   $Lu\geq 0$ and $\Ric \ge - \beta V g $, that for every smooth function $\eta$ with compact support in $[0, \infty)$ and vanishing at $0$, and for every $\beta>0$,
	\begin{equation}\label{eq_DaProp2.9}
		\begin{array}{l}
			0 \le \disp (n-1)\int_{0}^{\infty} (\eta_{s})^2 {u^{-\beta}} - \beta \int_{0}^{\infty}\frac{(u_s)^2}{u^2} {\eta^2}{u^{-\beta}} - \beta(n-2)\int_{0}^{\infty} \frac{u_s}{u} \left({\eta^2}{u^{-\beta}}\right)_s.
		\end{array}
	\end{equation}
	Writing $\eta=u^\beta\psi$, we have
	\begin{align*}
		\eta^2u^{-\beta}&=u^{\beta}\psi^2,\\ \eta_s&=\beta\psi u^{\beta-1}u_s+u^\beta\psi_s,\\ (\eta_s)^2u^{-\beta}&=\beta^2\psi^2u^{\beta-2}(u_s)^2+u^\beta(\psi_s)^2+2\beta\psi\psi_su^{\beta-1}u_s,
	\end{align*}
	and substituting in equation \eqref{eq_DaProp2.9} we get
	\begin{align}\label{es1}
		(n-1)\int_0^{\infty} u^{\beta}(\psi_s)^2 \geq
		-2\beta\int_0^{\infty} \psi\psi_s u^{\beta-1}u_s +\beta(1-\beta)\int_0^{\infty}\psi^2u^{\beta-2}(u_s)^2.
	\end{align}
	Now we follow the same lines of the computations in \cite[Lemma 2.3]{cmr}. Define
	$$
	I \doteq \int_0^{\infty} \psi\psi_s u^{\beta-1}u_s
	$$
	and observe that
	$$
	I = \frac{1}{\beta} \int_0^{\infty} \psi\psi_s (u^{\beta})_s =-\frac{1}{\beta} \int_0^{\infty} u^\beta(\psi_s)^2 -\frac{1}{\beta} \int_0^{\infty} \psi\psi_{ss} u^{\beta}.
	$$
	Moreover, for every $t>1$ and using Young's inequality with $\eps>0$ to be chosen later, we have
	\begin{align*}
		2\beta I &=2\beta t I+2\beta(1-t)I\\ &=-2t \int_0^{\infty} u^\beta(\psi_s)^2 -2t \int_0^{\infty} \psi\psi_{ss} u^{\beta} +2\beta(1-t)\int_0^{\infty} \psi\psi_s u^{\beta-1}u_s\\
		&\leq -2t \int_0^{\infty} u^\beta(\psi_s)^2 -2t \int_0^{\infty} \psi\psi_{ss} u^{\beta}\\
		&\quad+\beta(t-1)\eps \int_0^{\infty}\psi^2u^{\beta-2}(u_s)^2 +\frac{\beta(t-1)}{\eps}\int_0^{\infty} u^\beta(\psi_s)^2.
	\end{align*}
	
	Recalling that $\beta < (n-2)^{-1}$ and choosing
	$$
	\eps \doteq \frac{1-\beta}{t-1}
	$$
	we obtain
	\begin{align*}
		2\beta I &\leq -2t \int_0^{\infty} \psi\psi_{ss} u^{\beta}+\beta(1-\beta)\int_0^{\infty}\psi^2u^{\beta-2}(u_s)^2\\
		&\quad+\left[\frac{\beta(t-1)^2}{1-\beta}-2t\right]\int_0^{\infty} u^\beta(\psi_s)^2.
	\end{align*}
	From \eqref{es1} we get
	\begin{equation}\label{eq_aux}
		0\leq \left[\frac{\beta(t-1)^2}{1-\beta}-2t+(n-1)\right]\int_0^{\infty} u^\beta(\psi_s)^2 -2t \int_0^{\infty} \psi\psi_{ss} u^{\beta}
	\end{equation}
	for every $t>1$ and every positive $\beta < (n-2)^{-1}$. Let
	$$
	P(t, n, \beta) \doteq \frac{\beta(t-1)^2}{1-\beta}-2t+(n-1) = \frac{1}{1-\beta}\sq{\beta t^2-2t + (n-1)-\beta(n-2)};
	$$
	it is not difficult to see that there exists $\bar{t} = \bar{t}(\beta, n)>1$ such that, for $n\geq 3$, $P(\bar{t}, n, \beta)<0$. Therefore, from \eqref{eq_aux} we deduce
	$$
	0\leq -\int_0^{\infty} u^{\beta}(\psi_s)^2 -C \int_0^{\infty} u^{\beta}\psi\psi_{ss}
	$$
	for every $\psi$ smooth with compact support in $[0,\infty)$ and vanishing at $0$, and for some positive constant $C$ depending on $\bar{t}$, $n$ and $\beta$. Now we choose $\psi=s\varphi$ with $\varphi$ smooth with compact support in $[0,\infty)$: thus
	$$
	\psi_s=\varphi+s\varphi_s,\quad\psi_{ss}=2\varphi_s+s\varphi_{ss},
	$$
	and we get
	$$
	\int_0^{\infty} u^{\beta}\varphi^2 \leq \int_0^{\infty} u^{\beta}\left(-2(C+1)s\varphi\varphi_s-Cs^2\varphi\varphi_{ss}-s^2(\varphi_s)^2\right).
	$$
	Choose $\varphi$ such that $\varphi\equiv 1$ on $[0,R]$, $\varphi\equiv 0$ on $[2R,\infty)$ and with $|\varphi_s|$ and $|\varphi_{ss}|$ bounded by $\tilde{C}/R$ and $\tilde{C}/R^2$, respectively, for $R\leq s\leq 2R$ ($\tilde{C}$ is a positive constant). Then
	$$
	\int_0^R u^{\beta} \leq \int_0^{\infty} u^{\beta}\varphi^2 \leq C \int_R^{\infty}u^{\beta}
	$$
	for some $C>0$ independent of $R$. We conclude that necessarily
	$$
	\int_0^{\infty}u^{\beta} = \infty,
	$$
	i.e. $\bar{g}=u^{2\beta} g$ is complete. The desired lower bound on the Bakry-Emery Ricci curvature follows by plugging $\varphi = \beta \log u$ into \eqref{eq_ric_weight}: in our assumptions, and using the inequality $|\di u|^2 g \ge \di u \otimes \di u$, we get
	\[
	\begin{array}{lcl}
	\overline{\Ric}_f & \ge & \disp - \beta V - (n-2)\beta^2 \di \log u \otimes \di \log u + \beta (V + |\di \log u|^2)g \\[0.2cm]
	& \ge & \disp \beta \big( 1 - (n-2)\beta\big) \di \log u \otimes \di \log u = \frac{1 - (n-2)\beta}{(n-2)^2 \beta} \di f \otimes \di f.
	\end{array}
	\]
	This concludes the proof.
\end{proof}

Two consequences of the previous theorem are the following topological properties. We first consider the compact case, exploiting the results in \cite{qian, lott, ww}.

\begin{corollary}\label{cor_car_1}
	Let $(M,g)$ be a compact Riemannian manifold of dimension $n \ge 3$, and assume that there exists $V \in C^{0,\alpha}_\loc(M)$ such that
	\begin{equation}\label{eq_iporicci_2}
		\Ric \ge - \beta V g \qquad \text{on } \, M, \qquad L \doteq -\Delta - V \ge 0,
	\end{equation}
	where
	\[
	0<\beta \leq \frac{1}{n-2}.
	\]
	Then:
	\begin{itemize}
		\item[$(i)$] $\pi_1(M)$ has a free abelian subgroup of finite index of rank $\leq n$, with equality iff $M$ is a flat torus and $V\equiv 0$.
		\item[$(ii)$] $\pi_1(M)$ is finite if $\Ric + \beta V g>0$ at one point.
	\end{itemize}
\end{corollary}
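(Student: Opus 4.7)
My plan has three phases, each leveraging the conformal machinery just developed. First, I will construct a positive supersolution to $Lu \le 0$: on the compact manifold $M$, the first eigenvalue $\lambda_1$ of $-L$ is non-negative (by $L \ge 0$), and its first eigenfunction $u \in C^{2,\alpha}(M)$ is strictly positive and satisfies $Lu = -\lambda_1 u \le 0$. Second, I will apply Theorem~\ref{teo_main_critical_3} (if $\beta < 1/(n-2)$) or Remark~\ref{rem-border} (if $\beta = 1/(n-2)$) to this $u$: the resulting metric $\bar g = u^{2\beta} g$ on $M$ satisfies $\overline{\Ric}_f^N \ge \Ric + \beta V g \ge 0$, where $f = (n-2)\beta \log u$ and $N \in (n,\infty]$. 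Compactness of $M$ makes $\bar g$ automatically complete, neutralizing the caveat of Remark~\ref{rem-border}. The rest of the proof then consists in invoking the topological consequences of non-negative $N$-Bakry-\'Emery Ricci curvature due to Qian~\cite{qian}, Lott~\cite{lott} and Wei-Wylie~\cite{ww}.

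For $(i)$, the weighted Bochner formula in the $N$-Bakry-\'Emery setting immediately yields $b_1(M) \le n$. In the equality case, the Wei-Wylie extension of Cheeger-Gromoll forces the universal cover of $(M,\bar g)$ to split isometrically as flat $\R^n$ with $f$ constant; hence $u$ is a positive constant $c$, so $Lu = Vc \le 0$ gives $V \le 0$, while flatness of $g$ combined with $\Ric \ge -\beta V g$ gives $V \ge 0$, whence $V \equiv 0$. For $(ii)$, the same Bakry-\'Emery Cheeger-Gromoll splitting decomposes $\tilde M$ as $\R^k \times N'$ with $N'$ compact, $k \le n$, and $f$ independent of the $\R^k$-factor; $\pi_1(M)$ preserves this splitting, and a Bieberbach-type analysis of its projection to $\mathrm{Isom}(\R^k)$ extracts a free abelian subgroup of finite index and rank $k \le n$. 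The equality $k = n$ reduces to the scenario of $(i)$ and hence forces $V \equiv 0$. For $(iii)$, the pointwise strict inequality $\Ric + \beta V g > 0$ at some $p$ upgrades to $\overline{\Ric}_f^N > 0$ at $p$. If $\pi_1(M)$ were infinite, the Milnor-type polynomial-growth theorem of Wei-Wylie, together with finite generation, would produce an element of infinite order and thus a line in $\tilde M$; the ensuing $\R \times N'$ splitting would give $\overline{\Ric}_f^N(\partial_t,\partial_t) \equiv 0$ on $\tilde M$, contradicting strict positivity at any lift of $p$.

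The main obstacle I expect is locating, in the Bakry-\'Emery literature, the rigidity statements in a form that identifies $f$ as genuinely constant (rather than merely affine along the Euclidean factor) in the maximal-splitting cases of $(i)$ and $(ii)$. That identification is precisely what translates ``maximal splitting'' into the algebraic conclusion $u \equiv \mathrm{const}$, which in turn forces $V \equiv 0$; making this go through with finite $N$, and with $N = \infty$ under the automatic boundedness of $f$ on compact $M$, requires carefully matching the statements in \cite{qian,lott,ww} to the present conformal framework.
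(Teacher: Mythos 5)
Your proposal is correct and follows essentially the same route as the paper: produce a positive (super)solution of $Lu\le 0$, apply the conformal change of Theorem~\ref{teo_main_critical_3}/Remark~\ref{rem-border} to get $\overline{\Ric}_f\ge \Ric+\beta Vg\ge 0$ with $\bar g$ complete by compactness, and conclude via \cite[Corollary 6.7]{ww}. Your handling of the equality case (deducing $V\le 0$ from $Lu\le 0$ with $u$ constant and $V\ge 0$ from flatness) is in fact slightly more careful than the paper's, which assumes an exact solution $Lu=0$.
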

\begin{proof} Let $0 < u \in C^2(M)$ solve $Lu=0$. By Remark \ref{rem-border} the metric $\bar{g}=u^{2\beta}g$ satisfies
	\[
	\overline{\Ric}_f \ge \Ric + \beta V g \ge 0,
	\]
	with $f=(n-2)\beta \log u$. Moreover, since $M$ is compact $\bar g$ is complete. By \cite[Corollary 6.7]{ww}, $(ii)$ holds and $\pi_1(M)$ has a free abelian subgroup of finite index of rank $\leq n$. Moreover, equality holds if and only if $f$ is constant (i.e. $u$ is constant) and $(M,\bar{g})$ is a flat torus, hence so is $(M,g)$. Moreover, in this case, from $L u=0$ we also conclude $V\equiv 0$. 
\end{proof}

It is interesting to compare our theorem with those in \cite{boucar,car,carrose}, where the authors assume similar but different spectral conditions to obtain topological conclusions. In the non-compact case, using \cite{lim,XDli} we have the following:

\begin{corollary}\label{cor_car_2}
	Let $(M,g)$ be a complete, non-compact, Riemannian manifold of dimension $n \ge 3$, and assume that there exists $V \in C^{0,\alpha}_\loc(M)$ such that
	\begin{equation}\label{eq_iporicci_2}
		\Ric \ge - \beta V g \qquad \text{on } \, M, \qquad L \doteq -\Delta - V \ge 0,
	\end{equation}
	where
	\[
	0<\beta < \frac{1}{n-2}.
	\]
	Then:
	\begin{itemize}
		\item the conclusions of Corollary \ref{cor_car_ped} hold, and in case (i), $H_{n-1}(M,\mathbb{Z}) = 0$ holds independently of the orientability of $M$;
		\item positive harmonic functions on $M$ are constant.
	\end{itemize}
	
\end{corollary}

\begin{proof}
	For the first assertion, our range of $\beta$ is included in that of Corollary \ref{cor_car_ped}. The identity $H_{n-1}(M,\mathbb{Z}) = 0$ in case {\em (i)} regardless of orientability follows from \cite[Corollary 4.12]{lim} (see also its proof), an extension of \cite{shen_sormani} to manifolds with $\overline{\Ric}_f^N \ge 0$. The second property follows from \cite[Theorem 1.3]{XDli} applied to $(M, \bar g)$ once observed that, by \eqref{eq_Lapl_weighted}, a harmonic function $u$ satisfies $\bar{\Delta}_f u = 0$ on $(M,\bar g)$.
\end{proof}


\begin{remark}
	Compared to Corollary \ref{cor_car_ped}, the possibility to conclude $H_{n-1}(M,\mathbb{Z}) = 0$ in case {\em (i)} even for nonorientable $M$ depends on the following fact: the curvature bound 
	\[
	\overline{\Ric}_f^N \ge 0 
	\]
	granted by Theorem \ref{teo_main_critical_3} allow to split $M$ in presence of any $\bar g$-line, not only when $M$ has two ends. This is used to get information from the absence of the ``loop to infinity property" in \cite{shen_sormani}.
\end{remark}

%

\section{Applications to minimal hypersurfaces}\label{sec_geometry}

Let $M^n \to (N^{n+1},\bar g)$ be a complete, two-sided, minimally immersed hypersurface. Denote with a bar tensors associated to $\bar g$. Write
\[
J_\delta \doteq -\Delta - \delta\left( |A|^2 + \overline{\Ric}(\nu,\nu) \right)
\]
for the $\delta$-stability operator. Choose a Darboux frame $\{e_i\}$ along $M$, and set $e_0 = \nu$. As the second fundamental form $A$ in direction $\nu$ is traceless, it satisfies the refined Kato inequality
\[
\sum_{j=1}^n A_{1j}^2 \le \frac{n-1}{n}|A|^2,
\]
see \cite[Lemma 1.5]{prs_book}. Moreover, equality holds at a point $x$ where $A \neq 0$ if and only if $A(x)$ has only two eigenvalues $\lambda,\mu$, of multiplicities $1$ and $(n-1)$ respectively, and $Ae_1 = \lambda e_1$. By Gauss equation,
\begin{equation}\label{eq_gauss}
	\begin{array}{lcl}
		\Ric_{11} & = & \disp \sum_{\alpha \ge 2} \bar{R}_{1\alpha1\alpha} - \sum_{j \ge 1} A_{1j}^2 \ge \sum_{\alpha \ge 2} \bar{R}_{1\alpha1\alpha} - \frac{n-1}{n}|A|^2.
	\end{array}
\end{equation}

\begin{itemize}
	\item Assume that $\overline{\BRic} \ge 0$. Then,
	\begin{equation}\label{eq_gauss_1}
		\begin{array}{lcl}
			\Ric_{11} & \ge & \disp \sum_{\alpha \ge 2} \bar{R}_{1\alpha1\alpha} - \frac{n-1}{n}|A|^2 = \overline{\BRic}_{1\nu} - \overline{\Ric}(\nu,\nu) - \frac{n-1}{n}|A|^2 \\[0.5cm]
			& \ge & - \left(\overline{\Ric}(\nu,\nu) +|A|^2\right) + \frac{1}{n}|A|^2,
		\end{array}
	\end{equation}
	\item Assume that $\overline{\Sec}\ge 0$. Then,
	\begin{equation}\label{eq_gauss_2}
		\Ric_{11} \ge \disp \sum_{\alpha \ge 2} \bar{R}_{1\alpha1\alpha} - \frac{n-1}{n}|A|^2 \ge - \frac{n-1}{n} \left(\overline{\Ric}(\nu,\nu) +|A|^2\right).
	\end{equation}
\end{itemize}

We are ready to prove our results:

\begin{proof}[Proof of Theorem \ref{teo_minimal_main_stable}]
	Assume that $M$ has at least two ends, for otherwise $(i)$ holds. Defining $V \doteq \overline{\Ric}(\nu,\nu) +|A|^2$, from \eqref{eq_gauss_1} and the stability assumption we get
	\begin{equation}\label{eq_lower_ric_min}
	\Ric \ge - Vg + \frac{1}{n}|A|^2g , \qquad -\Delta - V \ge 0.
	\end{equation}
	Since $1 < \frac{4}{n-1}$ holds in dimension $n \in \{3,4\}$, applying Theorem \ref{teo_main_critical_2} we obtain that $V \equiv 0$ and that $M = \R \times P$ with the product metric $\di s^2 + g_P$, for some compact $(P,g_P)$ with non-negative Ricci curvature. From $0 = \Ric(\partial_s,\partial_s)$ and \eqref{eq_lower_ric_min} we conclude $|A| \equiv 0$, thus $\overline{\Ric}(\nu,\nu) = V - |A|^2 \equiv 0$. The conclusion  $\overline{\BRic}(\partial_s,\nu) \equiv 0$ follows from \eqref{eq_gauss_1}. In case $(i)$, if $M$ is parabolic then one cannot deduce $V \equiv 0$ (as the case of surfaces already suggests). However, if $\overline{\Ric} \ge 0$, any positive solution to $Lu = 0$ is superharmonic. Hence, the parabolicity of $M$ forces $u$ to be constant, so $V \equiv 0$.
\end{proof}	
	

If $\overline{\BRic}\ge 0$ is strengthened to $\overline{\Sec} \ge 0$, we gain one more dimension and further information in the previous result.

%
%

\begin{proof}[Proof of Theorem \ref{teo_minimal_main_stable_sec}]
	The proof follows the same path as that of Theorem \ref{teo_minimal_main_stable}, we only point out the differences: setting $V \doteq \overline{\Ric}(\nu,\nu) +|A|^2$, Gauss equation \eqref{eq_gauss_2} yields
	\begin{equation}\label{eq_forsec}
		\Ric \ge - \frac{n-1}{n} V, \qquad -\Delta - V \ge 0.
	\end{equation}
	Inequality $\frac{n-1}{n} < \frac{4}{n-1}$ now holds up to $n=5$, justifying the dimensional improvement. If $M$ has more than one end, from $V \equiv 0$ one deduces $|A| \equiv 0$ and $\overline{\Ric}(\nu,\nu) \equiv 0$. By Gauss equation, $M$ has non-negative sectional curvature, hence so does $(P,g_P)$.\\
\end{proof}	
	
\begin{proof}[Proof of Corollary \ref{cor_topol_minimal}]
It is a direct application of Corollaries \ref{cor_car_ped} and \ref{cor_car_2}, once we recall \eqref{eq_forsec} in the assumptions of Theorem \ref{teo_minimal_main_stable_sec} and that $\frac{n-1}{n} < \frac{1}{n-2}$ holds in dimension $3$.
\end{proof}

We are left to consider the case of $\frac{n-2}{n}$-stable hypersurfaces. Cheng \& Zhou's strategy to obtain Theorem \ref{teo_chengzhou} is based on a clever refinement of \cite[Theorem A]{liwang_ens}, and exploits the Hardy inequality
\[
\int_M \frac{(n-2)^2}{4r^2} \phi^2 \le \int_M |\nabla \phi|^2 \qquad \forall \, \phi \in \lip_c(M)
\]
satisfied by minimal hypersurfaces in Euclidean space, see Remark \ref{rem_hardy}.
Crucial for the conclusion in \cite{cheng_zhou_1} are both the completeness of the metric $r^{-2} g$ on $M$, and a lower bound for the conformal factor $r^{-2}$ in terms of the intrinsic distance on $M$. When trying to adapt the ideas to more general ambient spaces, this latter control is hard to achieve. In dimension $n=3$, we can use Theorem \ref{teo_main_critical} to overcome the problem and obtain information on $1/3$-stable hypersurfaces. The next theorem generalizes Theorem \ref{cor_chengzhou} in the Introduction.

\begin{theorem}\label{teo_minimal_main}
	Let $M^3 \to (N^{4},\bar g)$ be a complete, non-compact, two-sided minimal hypersurface in a manifold satisfying $\overline{\Sec}\ge 0$. Assume that $M$ is $\delta$-stable with $\delta \ge 1/3$. Then, one of the following mutually exclusive cases occurs:
	\begin{itemize}
		\item[(i)] $M$ has only one end. Moreover, either $M$ is non-parabolic, or $M$ is parabolic, totally geodesic and $\overline{\Ric}(\nu,\nu) \equiv 0$;
		\item[(ii)] $M$ is totally geodesic, $\overline{\Ric}(\nu,\nu) \equiv 0$ and $M =  \R \times P$ with the product metric, for some compact surface $P$ with non-negative Gaussian curvature.
		\item[(iii)] $\delta = 1/3$, $\overline{\Ric}(\nu,\nu) \equiv 0$, $M$ has at least two ends and is non-parabolic, the set $U \doteq \{|A|>0\}$ is non-empty and $A$ has only two eigenvalues $\lambda, \mu$ on each connected component of $U$, of multiplicities $1$ and $2$ respectively. Moreover, if $\{e_j\}$ is an orthonormal frame of eigenvectors with $Ae_1 = \lambda e_1$ and $Ae_\alpha = \mu e_\alpha$ for $\alpha \ge 2$, and setting $e_0 = \nu$, the components of the curvature tensor of $N$ satisfy
		\begin{equation}\label{eq_condiR}
			\begin{array}{lcl}
				\bar{R}_{1010} = \bar R_{0\beta 0 \alpha} = \bar R_{1\beta 1 \alpha} = \bar R_{\beta \alpha \beta 0} = \bar R_{\beta \alpha \beta 1} = \bar{R}_{\alpha 0 \alpha 1} = \bar{R}_{101\alpha} = \bar{R}_{010\alpha}  = 0.
			\end{array}
		\end{equation}
	\end{itemize}	
\end{theorem}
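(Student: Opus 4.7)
My plan is to reduce the proof to the criticality theorems of Section \ref{sec_criteria} by splitting into three sub-cases: (a) $M$ has only one end; (b) $M$ has at least two ends and $\delta > 1/3$; (c) $M$ has at least two ends and $\delta = 1/3$. In (a), case (i) of the statement holds; the parabolic versus non-parabolic dichotomy I would handle exactly as in the proofs of Theorems \ref{teo_minimal_main_stable}--\ref{teo_minimal_main_stable_sec}: $\overline{\Sec}\ge 0$ forces $V_0 \doteq |A|^2+\overline{\Ric}(\nu,\nu)\ge 0$, hence any positive solution $u$ to $\Delta u + \delta V_0 u = 0$ is superharmonic, so on a parabolic $M$ it must be constant, yielding $V_0\equiv 0$.

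For (b) and (c), I would set $V\doteq \delta(|A|^2+\overline{\Ric}(\nu,\nu))$ so that $L\doteq \Delta+V\ge 0$ by $\delta$-stability, and combine Gauss's equation with the refined Kato inequality and $\overline{\Sec}\ge 0$ (as in \eqref{eq_gauss_2}) to get
\[
\Ric \ge -\tfrac{2}{3\delta}\,V\,g,
\]
i.e.\ the spectral Ricci bound \eqref{eq_iporicci} with $\beta = 2/(3\delta)\in (0,2]$. Note that $\beta \le (n-1)/(n-2)=2$, and when $\delta > 1/3$ one has $\beta < 2 = 4/(n-1)$, so Theorem \ref{teo_main_critical_2} applies and provides $V\equiv 0$ together with $M = \R\times P$ for some compact $(P, g_P)$ with $\Ric_P\ge 0$. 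Arguing exactly as at the end of the proof of Theorem \ref{teo_minimal_main_stable_sec}, I would deduce $|A|\equiv 0$, $\overline{\Ric}(\nu,\nu)\equiv 0$, and the non-negativity of the Gaussian curvature of $P$ from Gauss's equation; this is case (ii).

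The crux is sub-case (c), where $\beta = 2$ is borderline for Theorem \ref{teo_main_critical}; crucially, since $n=3$ no growth hypothesis is required. The theorem then yields the criticality of $L$ together with the pointwise saturation
\[
\min_{X\in T^1_xM}\big(\Ric(X,X)+2V(x)\big) = 0 \qquad \forall\, x \in M.
\]
The main obstacle, as I see it, is to unpack this scalar identity geometrically. If $e_1$ realizes the minimum at $x$ and $\{e_0=\nu, e_1, e_\alpha\}$ is an orthonormal frame of $T_xN$, tracing the two inequalities in \eqref{eq_gauss_2} backwards I would obtain simultaneous equalities that give: (i) refined Kato equality $\sum_j A_{1j}^2=\tfrac{2}{3}|A|^2$, which at points of $U\doteq\{|A|>0\}$ yields the two-eigenvalue structure of $A$ with $e_1$ spanning the simple eigenspace; (ii) $\sum_{\alpha\ge 2}\bar R_{1\alpha 1\alpha}=0$, hence $\bar R_{1\alpha 1\alpha}=0$ for $\alpha=2,3$ by $\overline{\Sec}\ge 0$; and (iii) $\overline{\Ric}(\nu,\nu)\equiv 0$, whence $\bar R_{\alpha 0\alpha 0}=0$ for every $\alpha\ge 1$. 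The remaining off-diagonal vanishings in \eqref{eq_condiR} would then follow by polarization: if $\bar R(X,Y,X,Y)=\bar R(X,Z,X,Z)=0$ and $\overline{\Sec}\ge 0$, then the mixed coefficient $\bar R(X,Y,X,Z)$ must vanish, as seen by expanding $\bar R(X,aY+bZ,X,aY+bZ)\ge 0$ for all $a,b\in \R$. Running this trick with the four relevant choices of $X$ among $\{e_0,e_1,e_\alpha,e_\beta\}$ produces each of $\bar R_{\alpha 0\alpha 1},\bar R_{101\alpha},\bar R_{\alpha\beta 0\beta},\bar R_{\alpha 0\beta 0}$.

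To conclude sub-case (c), I would separate (ii) from (iii) according to whether $|A|$ vanishes identically. If $|A|\equiv 0$, then $V\equiv 0$, $\Ric\ge 0$ by Gauss, and $M$ has at least two ends; Cheeger--Gromoll's splitting then yields $M=\R\times P$ with $P$ compact, and $\overline{\Sec}\ge 0$ together with total geodesicity forces the Gaussian curvature of $P$ to be non-negative, giving case (ii). If instead $|A|\not\equiv 0$ then $U\neq \emptyset$; parabolicity of $M$ would force $V\equiv 0$ via the superharmonicity argument of sub-case (a), contradicting $|A|\not\equiv 0$, so $M$ must be non-parabolic, producing case (iii) with all the structural properties listed in the statement.
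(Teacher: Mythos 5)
Your proposal is correct and follows essentially the same route as the paper: reduce to the spectral criteria of Section \ref{sec_criteria} via Gauss' equation and the refined Kato inequality, unpack the pointwise saturation of the Ricci bound, and extract the curvature vanishings from the non-negativity of $\overline{\Sec}$ on perturbed planes. Two cosmetic differences: the paper works throughout with the single potential $V=\tfrac13|A|^2$ and obtains $\overline{\Ric}(\nu,\nu)\equiv 0$ and the case $\delta>1/3$ from the \emph{criticality} of $\Delta+\tfrac13|A|^2$ (no nonzero Hardy weight can be added), whereas you take $V=\delta(|A|^2+\overline{\Ric}(\nu,\nu))$, read $\overline{\Ric}(\nu,\nu)\equiv 0$ directly off the saturated chain \eqref{eq_gauss_2}, and dispatch $\delta>1/3$ through Theorem \ref{teo_main_critical_2}; both are valid. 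You also make explicit the parabolicity dichotomies in (i) and (iii), which the paper leaves implicit.

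One point to tighten: your polarization lemma is stated with the hypothesis that \emph{both} diagonal terms $\bar R(X,Y,X,Y)$ and $\bar R(X,Z,X,Z)$ vanish. For $\bar R_{\alpha\beta 0\beta}=\bar R(e_\beta,e_0,e_\beta,e_\alpha)$ this would require $\bar R_{\alpha\beta\alpha\beta}=0$, which is not among the identities you have established (only $\bar R_{1\alpha1\alpha}$, $\bar R_{1010}$ and $\bar R_{\alpha 0\alpha 0}$ are known to vanish). The conclusion still holds, because the expansion you write, $\bar R(X,aY+bZ,X,aY+bZ)=2ab\,\bar R(X,Y,X,Z)+b^{2}\bar R(X,Z,X,Z)\ge 0$, forces the mixed term to vanish as soon as the single diagonal term $\bar R(X,Y,X,Y)$ is zero (fix $b$ and let $a\to\pm\infty$); this first-order version is exactly what the paper uses for $\bar R_{\beta\alpha\beta 0}$. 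So the argument goes through once the hypothesis of your lemma is weakened accordingly.
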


\begin{proof}
	The restriction to dimension $n=3$ is only needed at one step of the proof, hence we prefer to keep writing $n$. The assumed $\delta$-stability and $\overline{\Sec} \ge 0$ therefore imply
	\[
	J_{\frac{n-2}{n}} \doteq -\Delta - \frac{n-2}{n} \left( |A|^2 + \overline{\Ric}(\nu,\nu)\right) \ge 0.
	\]
	Assume that $M$ has at least two ends, for otherwise $(i)$ holds. Define
	\[
	L \doteq -\Delta - \frac{n-2}{n}|A|^2, \qquad V \doteq \frac{n-2}{n}|A|^2.
	\]
	By Gauss equation \eqref{eq_gauss}, 	
	\[
	\Ric_{11} = \sum_{\alpha \ge 2} \bar{R}_{1\alpha1\alpha} - \sum_{j \ge 1} A_{1j}^2 \ge - \frac{n-1}{n}|A|^2 = - \frac{n-1}{n-2}V
	\]
	On the other hand,
	\begin{equation}\label{eq_compastabi}
		L \ge J_{\frac{n-2}{n}} \ge 0.
	\end{equation}	
	Therefore, applying Theorem \ref{teo_main_critical} we deduce that $L$ is critical and that, for each $x \in M$, there exists an orthonormal basis $\{e_j\}$ whose first element $e_1$ satisfies
	\begin{equation}\label{eq_ref_kat}
	\Ric_{11} = \sum_{j \ge 1} A_{1j}^2 = - \frac{n-1}{n}|A|^2.
	\end{equation}
	Namely, $\bar{R}_{1\alpha 1\alpha} = 0$ for each $\alpha \ge 2$, and $A$ satisfies equality in the refined Kato inequality. From \eqref{eq_compastabi} and the criticality of $L$,
	\[
	\overline{\Ric}(\nu,\nu) \equiv 0 \qquad \text{on } \, M.
	\]	
	If $\delta > \frac{n-2}{n}$, we deduce from
	\[
	L \ge J_\delta \ge J_{\frac{n-2}{n}} \ge 0
	\]
	and the criticality of $L$ that $J_\delta = J_{\frac{n-2}{n}}$, which means $|A| \equiv 0$. From Gauss equation we deduce that $M$ has non-negative sectional curvature. Since $M$ has two ends, the splitting theorem implies $M = \R \times P$ for some compact manifold $P$ with non-negative sectional curvature.\\[0.2cm]
	It remains to consider the case $\delta = \frac{n-2}{n}$ and $U \doteq \{|A|>0\} \neq \emptyset$. By \eqref{eq_ref_kat} and the characterization of equality in the refined Kato inequality,
	\begin{equation}\label{eq_frame}
		A_{1\alpha} = 0, \quad A_{11} = \lambda, \quad A_{\alpha\beta} = \mu \delta_{\alpha\beta} \qquad \text{with } \, \lambda + (n-1)\mu = 0.
	\end{equation}
	Since $\lambda \neq \mu$, the two eigenspace distributions have constant multiplicity and are therefore smooth. Thus, $\lambda, \mu \in C^\infty(U)$ and the frame $\{e_1,e_\alpha\}$ satisfying \eqref{eq_frame} can be chosen smoothly around any given point. Furthermore, from $\bar R_{1\alpha1\alpha}=0$ and $\overline{\Ric}(\nu,\nu) \equiv 0$ we deduce
	\[
	0 = \bar R_{1\alpha 1 \alpha} = \bar R_{1010} = \bar R_{\alpha 0 \alpha 0} \qquad \forall \alpha \ge 2.
	\]
	For $i \in \{0,1\}$, set
	\[
	F_i(t) = \bar R(e_i, e_\alpha + te_\beta, e_i, e_\alpha + te_\beta).
	\]
	From $F_i \ge 0$ and $F_i(0)=0$ we deduce $0 = F_i'(0) = 2\bar R_{i\beta i \alpha}$. Similarly, setting
	\[
	F_i(t) = \bar R(e_\beta, e_i + te_\alpha,e_\beta, e_i + te_\alpha),
	\]
	from $F_i \ge 0$ and $F_i(0)=0$ we deduce $0 = F_i'(0) = 2 \bar R_{\beta \alpha \beta i}$. Likewise, differentiating
	\[
	F(t) = \bar R( e_\alpha, e_0 + te_1,e_\alpha, e_0 + te_1)
	\]
	we get $0 = \bar R_{\alpha 0 \alpha 1}$, and differentiating, respectively,
	\[
	F(t) = \bar R( e_1, e_0 + te_\alpha, e_1, e_0 + te_\alpha), \qquad F(t) = \bar R( e_0, e_1 + te_\alpha, e_0, e_1 + te_\alpha)
	\]
	we deduce, respectively, $0 = \bar R_{101\alpha}$ and $0 = \bar R_{010\alpha}$. This concludes the proof.	
\end{proof}

	\begin{remark}
		Theorem \ref{teo_minimal_main} holds in dimension $n\ge 4$, with $(n-2)/n$ replacing $1/3$ and principal eigenspaces of dimensions $1$ and $(n-1)$,  whenever $M$ supports a positive solution to $J_{\frac{n-2}{n}} w \ge 0$ satisfying
		\[
		w(x) = o \left( r(x)^{\frac{n-2}{n-3}} \right) \qquad \text{as } \, r(x) \to \infty.
		\]
		However, currently we do not have a manageable geometric condition to guarantee the existence of such $w$.
	\end{remark}
	
	In $(iii)$ of Theorem \ref{teo_minimal_main}, it is unclear to us whether the mixed components $\bar R_{01\alpha\beta}$ should vanish or not. If this were the case, a computation using the Codazzi \& Mainardi equations would imply that the distribution generated by $\{e_\alpha\}$ is integrable, allowing to locally (or even globally) split $M$ as a warped product. This is the case of ambient Euclidean space:
	
	\begin{proof}[Proof of Theorem \ref{cor_chengzhou}]
		Denote by $f : M \to \R^4$ the minimal immersion. If $M$ has at least two ends, by Theorem \ref{teo_minimal_main} and since case $(ii)$ does not occur in Euclidean space we deduce that $M$ is non-parabolic, that $U \doteq \{|A|>0\}$ is non-empty and that $A$ has only two eigenvalues on any connected component $U' \subset U$. By a result due to Do Carmo \& Dajczer \cite[Theorem 4.4]{DoCD}, $f(U')$ is a piece of a catenoid $C$, and by continuity so is $f(\overline{U'})$. If by contradiction $U' \not \equiv M$, then $A$ should vanish on $\partial U'$, hence the second fundamental form of $C$ should vanish on $f(\partial U')$. However, the second fundamental form of $C$ is positive everywhere, contradiction. Therefore, $U'\equiv M$ and $f(M) \subset C$. Since $M$ is complete and $f : M \to C$ is a local isometry, by Ambrose theorem $f$ is surjective and a Riemannian covering, hence a diffemorphism since the $3$-dimensional catenoid is simply connected.
	\end{proof}
	
	\begin{proof}[Proof of Theorem \ref{teo_bernstein}]
		Since $\R^4$ is orientable and $M$ is two-sided, then $M$ is orientable. Moreover, Theorem \ref{cor_chengzhou} guarantees that $M$ has only one end: indeed, the catenoid is not $\delta$-stable for any $\delta > 1/3$, as inferrable by the criticality of $\Delta + 1/3|A|^2$ in Example \ref{ex_catenoid}.
In our assumptions, setting $V = \delta |A|^2$ we have from \eqref{eq_gauss}
		\[
		\Ric \ge - \frac{2}{3}|A|^2 g = -\beta Vg, \qquad -\Delta - V \ge 0 \qquad \text{with } \, \beta = \frac{2}{3\delta} < 2.
		\]
		We can thus apply Corollary \ref{cor_car_ped} to deduce either $H_2(M, \mathbb{Z}) = 0$ or that $M$ is, in particular, a manifold with non-negative Ricci curvature and linear volume growth. The second case does not occur, as Gauss equation would imply $0 \le R = -|A|^2$, thus $M$ would be totally geodesic, contradicting the linear volume growth property. We claim that $H_2(M,\mathbb{Z})=0$ has the following topological implication: \\[0.2cm]
		\noindent \textbf{Claim 1:} for any smooth, connected, relatively compact open set $U \Subset M$, each connected component $L$ of $M \backslash U$ has connected boundary.\\[0.2cm]
		{\em Proof:} assume by contradiction that $\partial_1 L$ and $\partial_2 L$ are distinct components of $\partial L \subset \partial U$. Pick tubular neighbourhoods $T_j$ of $\partial_j L$ and points $x_j,y_j \in T_j$ with $x_j \in U$, $y_j \in L$. Joining $y_1$ to $y_2$ with an arc in $L$, $x_1$ to $x_2$ with an arc in $U$ and $x_j$ to $y_j$ with a suitable arc in $T_j$ transverse to $\partial_j L$, we produce a loop $\gamma$ with intersection number $\gamma \cdot \partial_j L = \pm 1$. This contradicts the fact that $[\partial_j L] = 0$ in $H_2(M,\mathbb{Z})$. We include a brief argument to see this. Let $\Sigma = \partial_1L$,  pick a chain $c$ so that $\Sigma = \partial c$ and a smooth, connected relatively compact open set $V$ containing $\Sigma$ and the support of $c$. By \cite[Theorem 18.7]{lee}, $c$ can be chosen smooth. Let $P$ be the oriented double of $\overline{V}$. Then, $[\Sigma] = 0$ in $H_2(P,\mathbb{Z})$, so the following functional is zero by Stokes theorem for chains in \cite[Theorem 18.12]{lee}:
		\[
		[\omega] \mapsto \int_{\Sigma} \omega \ \ : \ \ H^2(P) \to \R.
		\]
		By Poincar\'e duality, the Poincar\'e dual $\eta_\Sigma$ is therefore zero in cohomology. Hence, by \cite[Proposition 7.3.12]{nicolaescu}, the intersection number satisfies
		\[
		\pm 1 = \gamma \cdot \Sigma = \int_P \eta_\gamma \wedge \eta_\Sigma = 0,
		\]
		contradiction.\\[0.2cm]
		Having shown that $M$ has only one end and that $H_2(M,\mathbb{Z}) = 0$, to prove that $M$ is a hyperplane we follow \cite{cl_2,clms,hlw} with some adjustments\footnote{Indeed, it is not enough to apply \cite[Theorem 1.7]{hlw} because $M$ is assumed to be simply connected there, and properness does not lift to the universal covering unless $M$ has finite fundamental group.}. Let $r$ be the distance in $\R^4$ from $0$ and write $\mathbb{B}(R)$ for the ball of radius $R$ in $\R^4$ centered at $0$. Up to translation, we assume $0 \not \in M$, $M \cap \mathbb{B}(1) \neq \emptyset$. Fix a connected component $M_1^*$ of $M \cap \mathbb{B}(1)$, and for $R>1$ let $M_R^*$ be the connected component of $M \cap \mathbb{B}(R)$ containing $M_1^*$. We shall prove that
		\begin{equation}\label{eq_cubic}
		|M^*_R|_g \le \Lambda R^3 \qquad \forall \, R>1,
		\end{equation}
		for some $\Lambda \in \R^+$. Write for convenience $c = e^{8\pi \sqrt{2}}$. Let $M_{cR}$ be the union of $M^*_{cR}$ and the relatively compact connected components of its complement. Since $M$ has only one end, $M \backslash M_{cR}$ is connected and thus, by Claim 1, $\partial M_{cR} = \partial (M\backslash M_{cR})$ is connected.
		Denote by $N$ the manifold $M$ endowed with the complete metric $\tilde g = r^{-2}g$, and by $N_0$ the subset $M_{cR}$. By \cite[Lemma 25]{cl_2}, for any $p \in M \backslash M_{cR}$ and $q \in M^*_R$ it holds
		\begin{equation}\label{eq_triangle_1}
		\di_{\tilde g}(p,q) \ge \log \frac{r(p)}{r(q)} \ge 8 \pi \sqrt{2}.
		\end{equation}
		We can thus apply \cite[Theorem 5.4]{hlw} to deduce the existence of a smooth, connected, relatively compact open set $\Omega \subset N_0$ with
		\begin{itemize}
			\item[(i)] $\partial N_0 \subset \Omega \subset B^{\tilde g}_{4\pi\sqrt{2}}(\partial N_0)$, the tubular neighbourhood of $\partial N_0$ of $\tilde g$-radius $4\pi\sqrt{2}$;
			\item[(ii)] every connected component of $\partial'\Omega \doteq \partial \Omega \backslash \partial N_0$ is a sphere of $\tilde g$-area at most $16\pi$ and diameter at most $4\pi$.
		\end{itemize}
		By (i), \eqref{eq_triangle_1} and the triangle inequality, $M_R^*$ is disjoint from $(M \backslash M_{cR}) \cup \overline{\Omega}$. Moreover, applying \cite[Lemma 25]{cl_2} with $p \in \overline \Omega$ and $q \in \partial M_{cR}$ we have $r(p) \le R e^{12\pi \sqrt{2}}$. In particular,
		\begin{itemize}
			\item[(iii)] $\tilde g \ge C_1R^{-2} g$ on $\partial' \Omega$, for some constant $C_1$.
		\end{itemize}
		Let $L$ be the union of $(M \backslash M_{cR}) \cup \Omega$ and the connected components of its complement which are disjoint from $M_R^*$, and let $M' = M \backslash L$. Then, $M'$ is relatively compact and $\partial M' \subset \partial'\Omega$. Moreover, since $M_R^*$ is connected, then $M' \supset M^*_R$ and $M'$ is connected as well. By Claim 1, $\partial M' = \partial L$ is connected, hence by (ii) its $\tilde g$-area does not exceed $16\pi$. The isoperimetric inequality for minimal hypersurfaces and (iii) then imply
		\[
		|M_R^*|_g \le |M'|_g \le C_2 |\partial M'|_g^{\frac{3}{2}} \le C_3 R^3|\partial M'|_{\tilde g}^{\frac{3}{2}} \le C_4 R^3,
		\]
		as required. To conclude, observe that $\{M_R^*\}$ is a family of relatively compact domains exhausting $M$. Applying the curvature estimate in \cite[Corollary 1.5]{hlw} to each (scaled) $M^*_R$ and letting $R \to \infty$ we deduce that $|A| \equiv 0$.
		%
		%
	\end{proof}

\section*{Acknowledgements}

We thank Gioacchino Antonelli, William Minicozzi, Marco Pozzetta, Giona Veronelli, Wilson Cunha and Marco Radeschi for useful remarks, and Gaoming Wang for clarifying for us a point in \cite{hlw}. The first, third and fourth authors are members of GNSAGA (Gruppo Nazionale per le Strutture Algebriche, Geometriche e loro Applicazioni). All the authors are partially supported by the PRIN project no. 20225J97H5 (Italy) ``Differential-geometric aspects of manifolds via Global Analysis''.

\vspace{0.4cm}

\noindent \textbf{Conflict of Interest.} The authors have no conflict of interest.



\

\Addresses


\begin{thebibliography}{AMR16}
\bibliographystyle{plain}

\bibitem{apx} G. Antonelli, M. Pozzetta, K. Xu, \emph{A sharp spectral splitting theorem.} Available at arXiv:2412.12707.

\bibitem{antonelli_xu} G. Antonelli, K. Xu, \emph{New spectral Bishop-Gromov and Bonnet-Myers theorems and applications to isoperimetry.} Available at arXiv:2405.08918.

\bibitem{anto_xu_gluing} G. Antonelli, K. Xu, \emph{Connected sum of manifolds with spectral Ricci lower bounds}. To appear in Proc. Amer. Math. Soc., available at arXiv:2505.18320.

\bibitem{anderson} M. Anderson, \emph{The compactification of a minimal submanifold by its Gauss map.} Available at http://www.math.sunysb.edu/$\sim$anderson/papers.html


\bibitem{bei_pipoli} F. Bei, G. Pipoli, \emph{$L^2$-harmonic forms and spinors on stable minimal hypersurfaces.} J. Lond. Math. Soc. (2) 113 (2026), no. 1, Paper No. e70432, 35 pp.

\bibitem{bellettini} C. Bellettini, \emph{Extensions of Schoen--Simon--Yau and Schoen--Simon theorems via iteration \`a la De Giorgi.} Invent. Math. 240 (2025), no. 1, 1--34.

\bibitem{berard} P. B\'erard, \emph{A note on Bochner type theorems for complete manifolds.} Manuscripta Math. 69 (3) (1990), 261--266.  

\bibitem{berard_castillon} P. B\'erard, P. Castillon, \emph{Inverse spectral positivity for surfaces.} Rev. Mat. Iberoam. 30 (2014), no.4, 1237--1264.

\bibitem{bmr} B. Bianchini, L. Mari, M. Rigoli, \emph{Yamabe type equations with a sign-changing nonlinearity, and the prescribed curvature problem}. J. Differential Equations 260 (2016), 7416--7497.

\bibitem{bmr_mem} B. Bianchini, L. Mari, M. Rigoli, \emph{On some aspects of oscillation theory and geometry.} Mem. Amer. Math. Soc. 225 (2013), no. 1056, vi+195 pp.

\bibitem{boucar} V. Bour, G. Carron, {\em A sphere theorem for three dimensional manifolds with integral pinched curvature}, Comm. Anal. Geom. 25 (2017), no. 1, 97--124.


\bibitem{cao_shen_zhu} H.-D. Cao, Y. Shen, S. Zhu, \emph{The structure of stable minimal hypersurfaces in $\R^{n+1}$.} Math. Res. Lett. 4 (1997), no. 5, 637--644.

\bibitem{car} G. Carron, {\em Geometric inequalities for manifolds with Ricci curvature in the Kato class.} Ann. Inst. Fourier 69 (2019), no. 7, 3095--3167.

\bibitem{carron_hardy} G. Carron, \emph{In\'egalités de Hardy sur les vari\'et\'es riemanniennes non-compactes.} J. Math. Pures Appl. (9) 76 (1997), no. 10, 883--891

\bibitem{car_ped} G. Carron, E. Pedon, \emph{On the differential form spectrum of hyperbolic manifolds.} Ann. Sc. Norm. Super. Pisa Cl. Sci. (5) 3 (2004), no. 4, 705--747.

\bibitem{carrose} G. Carron, C. Rose, {\em Geometric and spectral estimates based on spectral Ricci curvature assumptions.} J. Reine Angew. Math. 772 (2021), 121--145.

\bibitem{car_mon_tew} G. Carron, I. Mondello, D. Tewodrose, \emph{Kato meets Bakry-\'Emery.} Calc. Var. Partial Differential Equations 64 (2025), no. 6, Paper No. 193, 29 pp.

\bibitem{castillon} P. Castillon, \emph{An inverse spectral problem on surfaces.} Comment. Math. Helv.   81 (2006), no. 2, 271--286.

\bibitem{cmm} G. Catino, P. Mastrolia, D. Monticelli, \emph{Uniqueness of critical metrics for a quadratic curvature functional}. J. Math. Pures Appl. (9) 211 (2026), Paper No. 103883.

\bibitem{cmr} G. Catino, P. Mastrolia, A. Roncoroni, \emph{Two rigidity results for stable minimal hypersurfaces.} Geom. Funct. Anal. 34 (2024), no. 1, 1--18.

\bibitem{cr} G. Catino, A. Roncoroni, \emph{A closure result for globally hyperbolic spacetimes.} Proc. Amer. Math. Soc. 152 (2024), 5339--5354.

\bibitem{chgr} J. Cheeger, D. Gromoll, \emph{The splitting theorem for manifolds of nonnegative Ricci curvature.} J. Differential Geometry 6 (1971/72), 119--128.

\bibitem{xucheng} X. Cheng, \emph{One end theorem and application to stable minimal hypersurfaces.} Arch. Math. 90 (2008), 461--470.

\bibitem{xucheng_H} X. Cheng, \emph{On constant mean curvature hypersurfaces with finite index.} Arch. Math. (Basel) 86 (2006), no. 4, 365--374.

\bibitem{cheng_zhou_1} X. Cheng, D. Zhou, \emph{Manifolds with weighted Poincar\'e inequality and uniqueness of minimal hypersurfaces}.	Comm. Anal. Geom. 17 (2009), no. 1, 139–-154.

\bibitem{cl_1} O. Chodosh, C. Li, \emph{Stable minimal hypersurfaces in $\R^4$}.
 Acta Math. 233 (2024), no. 1, 1--31.

\bibitem{cl_2} O. Chodosh, C. Li, \emph{Stable anisotropic minimal hypersurfaces in $\R^4$.} Forum Math. Pi 11 (2023), Paper No. e3, 22 pp.

\bibitem{cls} O. Chodosh, C. Li, D. Stryker, \emph{Complete stable minimal hypersurfaces in positively curved 4-manifolds.} J. Eur. Math. Soc. (JEMS) 28 (2026), no. 4, 1457–1487.

\bibitem{clms} O. Chodosh, C. Li, P. Minter, D. Stryker, \emph{Stable minimal hypersurfaces in $\R^5$}. To appear in Ann. of Math., available at arXiv:2401.01492.


\bibitem{cm_1} T. Colding, W.P. Minicozzi II, \emph{The space of embedded minimal surfaces of fixed genus in a 3-manifold. II. Multi-valued graphs in disks.} Ann. of Math. (2) 160 (2004), no. 1, 69--92.

\bibitem{cm_2} T. Colding, W.P. Minicozzi II, \emph{The Calabi-Yau conjectures for embedded surfaces.} Ann. of Math. (2) 167 (2008), no. 1, 211--243.

\bibitem{DoCD} M.P. do Carmo, M. Dajczer, \emph{Rotation Hypersurfaces in Spaces of Constant Curvature}. Trans. Amer. Math. Soc. 277 (1983), no. 2, 685--709.

\bibitem{dodziuk} J. Dodziuk, \emph{$L^2$ harmonic forms on complete manifolds.} Ann. of Math. Stud., 102 (1982), 291--302.

\bibitem{enr} M.F. Elbert, B. Nelli, H. Rosenberg, \emph{Stable constant mean curvature hypersurfaces.} Proc. Amer. Math. Soc. 135 (2007), no. 10, 3359--3366.

\bibitem{fischercolbrie} D. Fischer-Colbrie, \emph{On complete minimal surfaces with finite Morse index in three-manifolds.} Invent. Math. 82 (1985), 121--132.

\bibitem{fischer_schoen} D. Fischer-Colbrie, R. Schoen, \emph{The structure of complete stable minimal surfaces in $3$-manifolds of nonnegative scalar curvature.} Comm. Pure Appl. Math. 33 (1980), no.2, 199--211.

\bibitem{fang} F. Fang, X.-D. Li, Z. Zhang, {\em Two generalizations of Cheeger--Gromoll splitting theorem via Bakry-Emery Ricci curvature.} Ann. Inst. Fourier 59 (2009), no. 2, 563--573.

\bibitem{fuli} H. Fu, Z. Li, \emph{The structure of complete manifolds with weighted Poincare’ inequalities and minimal hypersurfaces.} International J. Math. 21 (2010), 1--8.

\bibitem{gromov} M. Gromov, {\em Positive curvature, macroscopic dimension, spectral gaps and higher signatures}, Functional analysis on the eve of the 21st Century, Vol. II (New Brunswick, NJ, 1993), Progr. Math., vol. 132, Birkh\"auser Boston, Boston, MA, 1996, pp. 1--213.

\bibitem{hw} A.J. Hoffman, H.W. Wielandt, \emph{The variation of the spectrum of a normal matrix.} Duke Math. J. 20 (1953), 37--39.

\bibitem{hlw} H. Hong, H. Li, G. Wang, \emph{On $\delta$-stable minimal hypersurfaces in $\R^{n+1}$.} Available at arXiv:2407.03222.

\bibitem{irv} D. Impera, M. Rimoldi, G. Veronelli, \emph{Asymptotically non-negative Ricci curvature, elliptic Kato constant and isoperimetric inequalities.} Calc. Var. Partial Differential Equations 64 (2025), no. 6, Paper No. 195, 35 pp.

\bibitem{lee} J. Lee, \emph{Introduction to smooth manifolds.}  Second edition. Grad. Texts in Math., 218. Springer, New York, 2013. xvi+708 pp.

\bibitem{litam} P. Li, L.-F. Tam, \emph{Harmonic functions and the structure of complete manifolds.} J. Differential Geom. 35 (1992), no. 2, 359--383.

\bibitem{liwang_positive_1} P. Li, J. Wang, \emph{Complete manifolds with positive spectrum. I.} J. Differential Geom. 58 (2001), no. 3, 501--534.

\bibitem{liwang_positive_2} P. Li, J. Wang, \emph{Complete manifolds with positive spectrum. II.} J. Differential Geom.   62 (2002), no. 1, 143--162.

\bibitem{liwang_minimal} P. Li, J. Wang, \emph{Stable minimal hypersurfaces in a nonnegatively curved manifold}. J. Reine Angew. Math.  566 (2004), 215--230.

\bibitem{liwang_ens} P. Li, J. Wang, \emph{Weighted Poincaré inequality and rigidity of complete manifolds.} Ann. Sci. \'Ecole Norm. Sup. 39 (4) (2006), no.6, 921--982.

\bibitem{lim} A. Lim, {\em The splitting theorem and topology of noncompact spaces with nonnegative $N$-Bakry \'Emery Ricci curvature.} Proc. Amer. Math. Soc.  149 (2021), no. 8, 3515--3529.

\bibitem{XDli} X.-D. Li, \emph{Liouville theorems for symmetric diffusion operators on complete Riemannian manifolds.} J. Math. Pures Appl. (9) 84 (2005), no. 10, 1295--1361.

\bibitem{lott} J. Lott, {\em Some geometric properties of the Bakry--Emery–Ricci tensor.} Comment. Math. Helv. 78 (2003) 865--883.

\bibitem{mmrs} M. Magliaro, L. Mari, F. Roing, A. Savas-Halilaj, \emph{Sharp pinching theorems for complete submanifolds in the sphere.} J. Reine Angew. Math.   814 (2024), 117--134.

\bibitem{mrs} L. Mari, M. Rigoli, A.G. Setti, \emph{On the $1/H$-flow by $p$-Laplace approximation: new estimates via fake distances under Ricci lower bounds.} Amer. J. Math. 144 (2022), no. 3, 779--849. Corrigendum on: Amer. J. Math. 145 (2023), no. 3, 667--671.

\bibitem{mazet} L. Mazet, \emph{Stable minimal hypersurfaces in $\R^6$}. Available at arXiv:2405.14676.

\bibitem{miya} R. Miyaoka, \emph{$L^2$ harmonic 1-forms on a complete stable minimal hypersurface.} In
Geometry and global analysis (Sendai, 1993), 289--293. Tohoku Univ., Sendai, 1993.

\bibitem{murata} M. Murata, \emph{Structure of positive solutions to $(-\Delta + V)u=0$ in $\R^n$.} Duke Math. J. 53 (1986), no. 4, 869--943.

\bibitem{nicolaescu} L. Nicolaescu, \emph{Lectures on the geometry of manifolds.} Third edition. World Scientific Publishing Co. Pte. Ltd., Hackensack, NJ, [2021], ©2021. xviii+682 pp.


\bibitem{palmer} B. Palmer, \emph{Stability of minimal hypersurfaces.} Comment. Math. Helv. 66 (1991), no. 2, 185--188.


\bibitem{pv} S. Pigola, G. Veronelli, \emph{Remarks on $L^p$-vanishing results in geometric analysis.}  Internat. J. Math. 23 (2012), no. 1, 1250008, 18 pp.

\bibitem{prs_JFA} S. Pigola, M. Rigoli, A.G. Setti, \emph{Vanishing theorems on Riemannian manifolds, and geometric applications.} J. Funct. Anal. 229 (2005), no. 2, 424--461. 

\bibitem{prs_book} S. Pigola, M. Rigoli, A.G. Setti, \emph{Vanishing and finiteness results in geometric analysis.} Progr. Math. 266, Birkh\"auser Verlag, Basel, 2008, xiv+282 pp.

\bibitem{pinchover} Y. Pinchover, \emph{On positive solutions of second-order elliptic equations, stability results, and classification.} Duke Math. J. 57, (1988), 955--980.


\bibitem{pinchover2} Y. Pinchover, \emph{On criticality and ground states of second order elliptic equations, II.} J. Differential Equations 87 (1990), 353--364.

\bibitem{pincho_tinta} Y. Pinchover, K. Tintarev, \emph{A ground state alternative for singular Schr\"odinger operators.} J. Funct. Anal. 230 (2006), no. 1, 65--77.

\bibitem{pincho_tinta_p} Y. Pinchover, C. Tintarev, \emph{Ground state alternative for $p$-Laplacian with potential term.} Calc. Var. Partial Differential Equations 28 (2007), no.2, 179--201.

\bibitem{qian} Z. Qian, {\em Estimates for weighted volumes and applications.} Quart. J. Math. Oxford Ser. (2) 48 (1997), no. 190, 235--242.
	
\bibitem{schoen_yau} R. Schoen, S.-T. Yau, \emph{Complete three dimensional manifolds with positive Ricci curvature and scalar curvature.} In Seminar on differential geometry, 209--228.
Ann. Math. Stud. 102, Princeton Univ. Press, Princeton, NJ, 1982.

\bibitem{schoen_yau_cmp} R. Schoen, S.-T. Yau, \emph{The existence of a black hole due to condensation of matter.} Comm. Math. Phys. 90 (1983), 575--579.

\bibitem{schoen_yau_harm} R. Schoen, S.-T. Yau, \emph{Harmonic maps and the topology of stable hypersurfaces and manifolds with nonnegative Ricci curvature.} Comment. Math. Helvetici 51 (1976), 333--341.

\bibitem{shen_sormani} Z. Shen, C. Sormani, \emph{The codimension one homology of a complete manifold with nonnegative Ricci curvature.} Amer. J. Math. 123 (2001), no. 3, 515--524.

\bibitem{shen_ye_min} Y. Shen, R. Ye, \emph{On stable minimal surfaces in manifolds of positive bi-Ricci curvatures.} Duke Math. J. 85 (1996), no. 1, 109--116.

\bibitem{shen_ye} Y. Shen, R. Ye, \emph{On the geometry and topology of manifolds of positive bi-Ricci curvature.} Available at arXiv:9708014 (1997).

\bibitem{shen_zhu_MathAnn} Y. Shen, S. Zhu, \emph{Rigidity of stable minimal hypersurfaces.} Math. Ann. 309 (1997), no. 1, 107--116.

\bibitem{shen_zhu} Y.-B. Shen, X.-H. Zhu, \emph{On stable complete minimal hypersurfaces in $R^{m+1}$}. Amer. J. Math. 120 (1998), 103--116.

\bibitem{tam_zhou} L.-F. Tam, D. Zhou, \emph{Stability properties for the higher dimensional catenoid in $\R^{m+1}$}. Proc. Amer. Math. Soc. 137 (2009), 3451--3461.

\bibitem{tanno} S. Tanno, \emph{$L^2$ harmonic forms and stability of minimal hypersurfaces.} J. Math. Soc. Japan. 48 (1996), 761--768	

\bibitem{vieira} M. Vieira, \emph{Vanishing theorems for $L^2$ harmonic forms on complete Riemannian manifolds.} Geom. Dedicata 184 (2016), 175--191.


\bibitem{ww} G. Wei, W. Wylie, {\em Comparison geometry for the Bakry-Emery Ricci tensor.} J. Differential Geom. 83 (2009), no. 2, 377--405.

\bibitem{weidl} T. Weidl, \emph{Remarks on virtual bound states for semi-bounded operators,} Comm. Partial Differential Equations 24 (1999), 25--60.

\bibitem{zhao} Z. Zhao, \emph{Subcriticality, positivity and gaugeability of the Schr\"odinger operator.} Bull. Am. Math. Soc. 23 (1990), no. 2, p. 513--517.

\end{thebibliography}
\end{document}